\numberwithin{equation}{section}
\theoremstyle{plain}
\newtheorem{theorem}{Theorem}[section]
\newtheorem{lemma}[theorem]{Lemma}
\newtheorem{corollary}[theorem]{Corollary}
\newtheorem{problem}[theorem]{Problem}
\theoremstyle{definition}
\newtheorem{definition}[theorem]{Definition}
\theoremstyle{remark}
\newtheorem{remark}[theorem]{Remark}
\begin{document}

\title
[Lefschetz pencils on a complex projective plane]{Lefschetz pencils on a complex projective plane from a topological viewpoint}

\author[Ju A Lee]{Ju A Lee}
\date{\today}
\address{Ju A Lee \\ Research Institute of Mathematics \\ Seoul National University \\ Seoul 151-747, Korea}
\email{jualee@snu.ac.kr}

\subjclass[2020]{Primary 57R22, 57R55, 20F12, 57M07}
\keywords{Lefschetz pencil, monodromy factorization, complex projective plane, mapping class group, braid monodromy, Lefschetz fibration, $4$-manifold}

\begin{abstract}
In this article, we present a differential topological construction of symplectic Lefschetz pencils of genus $\frac{(d-1)(d-2)}{2}$ with $d^2$ base points and $3(d-1)^2$ critical points for arbitrary $d\geq 4$, analogous to the holomorphic Lefschetz pencils of curves of degree $d$ in $\mathbb{C}P^2$. Moreover, for the case $d=4$, we derive an explicit monodromy factorization of the genus $3$ holomorphic Lefschetz pencil on $\mathbb{C}P^2$ based on the braid monodromy technique and prove that it can also be topologically constructed by breeding the monodromy relations of the genus $1$ holomorphic Lefschetz pencils.  

\end{abstract}

\maketitle

\section{Introduction}

Any projective surface admits a (holomorphic) Lefschetz pencil. More generally, Donaldson \cite{Don:99} showed that every symplectic $4$-manifold admits a (symplectic) Lefschetz pencil. Conversely, the total space of any Lefschetz pencil admits a compatible symplectic form, as proven by Gompf \cite{Go:04}. On the other hand, Lefschetz pencils can be described topologically by means of their monodromy factorizations in the mapping class group of the fiber surface according to Kas \cite{Kas:80}, Matsumoto \cite{Mat:96}, and Baykur and Hayano \cite{BH:16}. Altogether, these results enable a topological characterization of projective surfaces, and more generally, of symplectic $4$-manifolds. For example, Hamada and Hayano studied the holomorphic Lefschetz pencils on $T^4$ due to Smith \cite{Sm:01}, from a topological point of view in \cite{HH:18}. Korkmaz and Ozbagci found a topological construction analogous to the genus $1$ holomorphic Lefschetz pencil on a complex projective plane $\mathbb{C}P^2$ in \cite{KO:08} by using the well-known relations in the mapping class group, and its equivalence with the holomorphic one was later verified by Hamada and Hayano in \cite{HH:21}. In this paper, we address the topological construction of the holomorphic Lefschetz pencils of all higher genera on $\mathbb{C}P^2$.

Throughout this paper, we denote the $n$-dimensional complex projective space by $\mathbb{P}^n$ for simplicity. First, we recall that the holomorphic Lefschtez pencil on the Veronese surface $v_d(\mathbb{P}^2)\subset\mathbb{P}^N$, the image under the Veronese embedding $v_d$ for each natural number $d$, is equivalent to the Lefschetz pencil of degree $d$ curves in $\mathbb{P}^2$ obtained by choosing a generic pair of homogeneous polynomials $p_0$ and $p_1$ of degree $d$ in $\mathbb{C}^3$.(cf.\cite{GS:99}) The chosen polynomials define two degree $d$ curves in $\mathbb{P}^2$ intersecting each other at $d^2$ points, say $Q_1,\cdots Q_{d^2}$. From this, one can consider the pencil $\{C_{[t_0:t_1]}~|~[t_0:t_1]\in\mathbb{P}^1\}$ of degree $d$ curves, where $C_{[t_0:t_1]}:=\{z\in\mathbb{P}^2~|~t_0p_0(z)+t_1p_1(z)=0\}$, and define the Lefschetz pencil \[h_d:\mathbb{P}^2\setminus\{Q_1,\cdots,Q_{d^2}\}\rightarrow\mathbb{P}^1, ~Q\mapsto [t_0:t_1],\] where $[t_0:t_1]\in\mathbb{P}^1$ is the unique parameter for which $C_{[t_0:t_1]}$ passes through $Q$. This holomorphic Lefschetz pencil has fiber genus $\frac{(d-1)(d-2)}{2}$, $d^2$ base points, and $3(d-1)^2$ many critical points. Moreover, after blowing up all the base points, it extends to a Lefschetz fibration $\tilde{h}_d:\mathbb{P}^2\#d^2\overline{\mathbb{P}^2}\rightarrow\mathbb{P}^1$ with $d^2$ many $(-1)$ sections.

Let $\Sigma_g^b$ be a compact oriented surface of genus $g$ with $b$ boundary components, and let $\Gamma_g^b$ be its mapping class group, defined as the group of isotopy classes of all orientation preserving self-diffeomorphisms of $\Sigma_g^b$ which are the identity on the boundary and the isotopies are assumed to fix the boundary points.  
For $d=1$, the pencil of lines has no singular fibers. For $d=2$, the monodromy factorization of the Lefschetz pencil of conic curves is known to be a Lantern relation, a well-known relation of the form $t_{c_3}t_{c_2}t_{c_1}=\prod_{j=1}^{4}t_{\delta_j}$ in $\Gamma_0^4$, where the right-hand-side is the product of the right-handed Dehn twists about four boundary parallel simple closed curves $\delta_j$ and the left-hand-side is the product of the right-handed Dehn twists about three interior curves $c_i$ in $\Sigma_0^4$ (cf.\cite{Aur:03, AK:08}). And for $d=3$, Korkmaz and Ozbagci \cite{KO:08} found a new boundary-interior relation in $\Gamma_1^9$, so called the $9$-holed torus relation, which has the form $t_{c_{12}}\cdots t_{c_2}t_{c_1}=\prod_{j=1}^{9}{t_{\delta_j}}$, as a differential topological construction analogous to the holomorphic Lefschetz pencil of cubic curves. At that time, it was not known if their topological construction in the mapping class group actually realizes the holomorphic one, but later it was verified by Hamada and Hayano in \cite{HH:21}. Now, it is natural to ask the following question.

\begin{problem}
\begin{itemize}
\item[(a)] Can we find such boundary-interior relations for arbitrary $d\geq4$ analogous to the holomorphic Lefschetz pencils of degree $d$ curves in $\mathbb{P}^2$, in a topological way by using the well-known relations in the mapping class group?
\item[(b)] Can we show the equivalence between the topologically constructed relations and the holomorphic Lefschetz pencil of degree $d$ curves in $\mathbb{P}^2$?
\end{itemize}
\end{problem}

Our first main theorem, Theorem \ref{thm:first} below, gives an answer to the above question (a) for arbitrary $d\geq 4$, and the second main theorem, Theorem \ref{thm:second}, provides an answer to (b) for the $d=4$ case. 

\begin{theorem}\label{thm:first}
For arbitrary $d\geq 4$, there exists a symplectic Lefschetz pencil $f_d:X_d\dashrightarrow\mathbb{P}^1$ of genus $g_d=\frac{(d-1)(d-2)}{2}$ with $d^2$ base points and $3(d-1)^2$ critical points whose total space $X_d$ is diffeomorphic to $\mathbb{P}^2$ and whose monodromy factorization is given by  
\[j(B_{3(d-2)^2}\cdots B_{d-1})i_{d-2}(\mathbb{B}_1)i_{d-3}(\mathbb{B}_2)\cdots i_2(\mathbb{B}_2)i_1(\mathbb{A})=\prod_{j=1}^{d^2}T_{\delta_j} ~\text{in}~ \Gamma_{g_d}^{d^2},\]
where 
\begin{itemize}
\item $B_{3(d-2)^2}\cdots B_{d-1}$ denotes a subword of the monodromy factorization of $f_{d-1}:X_{d-1}\dashrightarrow\mathbb{P}^1$;
\item $\mathbb{A}$ denotes a subword of length $9$ of the $8$-holed torus relation on $\mathbb{P}^2\#\overline{\mathbb{P}^2}$;
\item $\mathbb{B}_1,$ (or $\mathbb{B}_2$) denotes a subword of length $8$, (or $7$) of the $8$-holed torus relation on $\mathbb{P}^1\times\mathbb{P}^1$;
\item $i_1,\cdots, i_{d-2}$ represent the embeddings of $\Sigma_1^8$ into $\Sigma_{g_d}^{d^2}$ as in Figure \ref{fig:Embeddings}(a); and 
\item $j$ represents the embedding of $\Sigma_{g_{d-1}}^{(d-1)^2}$ into $\Sigma_{g_d}^{d^2}$ as in Figure \ref{fig:Embeddings}(b).
\end{itemize}
\end{theorem}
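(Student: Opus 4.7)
The plan is to argue by induction on $d\geq 4$, constructing $f_d$ from $f_{d-1}$ by a sequence of breeding substitutions in the mapping class group, in the spirit of Korkmaz--Ozbagci. The base case $d=4$ takes as input the genus-one pencil $f_3$ on $\mathbb{P}^2$ from \cite{KO:08, HH:21}. For the inductive step, I would first lift a tail $B_{3(d-2)^2}\cdots B_{d-1}$ of the $f_{d-1}$-factorization into $\Gamma_{g_d}^{d^2}$ via the embedding $j:\Sigma_{g_{d-1}}^{(d-1)^2}\hookrightarrow\Sigma_{g_d}^{d^2}$ of Figure~\ref{fig:Embeddings}(b), whose complement in $\Sigma_{g_d}^{d^2}$ is a genus-one region carrying the $2d-1$ additional boundary components.

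I would then perform $d-2$ successive breedings along this collar using the embeddings $i_1,\dots,i_{d-2}:\Sigma_1^8\hookrightarrow\Sigma_{g_d}^{d^2}$ of Figure~\ref{fig:Embeddings}(a). Each step substitutes a subword $\mathbb{A}$, $\mathbb{B}_1$, or $\mathbb{B}_2$ drawn from an 8-holed torus relation: $\mathbb{A}$ from the genus-one pencil on $\mathbb{P}^2\#\overline{\mathbb{P}^2}$, and $\mathbb{B}_1,\mathbb{B}_2$ from the $(2,2)$-pencil on $\mathbb{P}^1\times\mathbb{P}^1$. After all $d-2$ breedings the boundary-twist product grows from $(d-1)^2$ factors to $d^2$ factors, while the interior-twist count on the left grows by $9+7(d-4)+8=7d-11$, matching the arithmetic difference between $3(d-1)^2$ and the $3(d-2)^2-(d-1)+1$ twists inherited from the lifted tail. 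The key algebraic task is to choose each $i_k$ so that the subword it inserts cancels precisely the boundary-parallel twist that the previous stage has produced; this requires Hurwitz moves and careful tracking of how the new vanishing cycles embed in the ambient $\Sigma_{g_d}^{d^2}$.

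Once the factorization is established, Kas's theorem produces a genus-$g_d$ Lefschetz fibration $\tilde f_d:\tilde X_d\to\mathbb{P}^1$ with $d^2$ disjoint $(-1)$-sections, and contracting these sections yields the pencil $f_d:X_d\dashrightarrow\mathbb{P}^1$; Gompf's theorem equips $X_d$ with a compatible symplectic structure. Standard Euler-characteristic and signature formulas for Lefschetz fibrations give $\chi(X_d)=3$ and $\sigma(X_d)=1$, and simple connectivity follows because $\pi_1$ of the fiber is normally generated by the vanishing cycles together with the boundary classes. The base-point sections realize symplectic spheres of positive self-intersection, so by McDuff's theorem $X_d$ is a symplectic rational manifold; combined with $b_2(X_d)=1$ and the computed invariants this forces $X_d\cong\mathbb{P}^2$.

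The principal obstacle is the breeding step itself: one must draw the embeddings $j$ and $i_k$ explicitly enough to identify, at every stage, the boundary-parallel twist that the next inserted relation will cancel, and to verify that the required Hurwitz rearrangements can actually be realized in $\Gamma_{g_d}^{d^2}$. Once these diagrams are in hand the remaining verification is bookkeeping, but assembling them coherently for general $d$ is where essentially all of the geometric content resides.
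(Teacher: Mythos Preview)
Your inductive breeding strategy matches the paper's approach, and your arithmetic for the genus, base-point, and critical-point counts is correct. The paper organizes the construction slightly differently: it first assembles the $d-2$ copies of the 8-holed torus relation into an intermediate genus-$(d-2)$ pencil on $\mathbb{P}^2\#\overline{\mathbb{P}^2}$ with $4d-4$ base points, and then breeds that pencil with $f_{d-1}$ in a single step (after a parallel induction showing that $f_{d-1}$ can itself be put into a factorization of the shape needed for breeding). But this is packaging; the underlying mechanism is what you describe, and you correctly identify the explicit tracking of the embeddings $i_k$, $j$ and the attendant Hurwitz moves as where the real work lies.

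There is, however, a genuine gap in your identification of $X_d$. The sections of $\tilde X_d$ arising from the base points are $(-1)$-spheres, not spheres of positive self-intersection, so McDuff's theorem does not apply to them; and in $X_d$ itself you have exhibited no symplectic sphere at all (the fibers have genus $g_d>0$). The paper instead invokes a result of Sato: a relatively minimal genus-$g$ Lefschetz fibration with more than $2g-2$ disjoint $(-1)$-sphere sections has rational or ruled total space, and here $d^2>2g_d-2=d^2-3d$. Once $\tilde X_d$ (hence $X_d$) is known to be rational or ruled, the signature computation $\sigma(X_d)=1$---coming from the signatures $0$ and $+1$ of the 8- and 9-holed torus relations---forces $X_d\cong\mathbb{P}^2$ directly, with no separate appeal to simple connectivity required.
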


\begin{figure}
\subfloat[$i_1,\cdots,i_{d-2}$]{\includegraphics[width=0.4\textwidth]{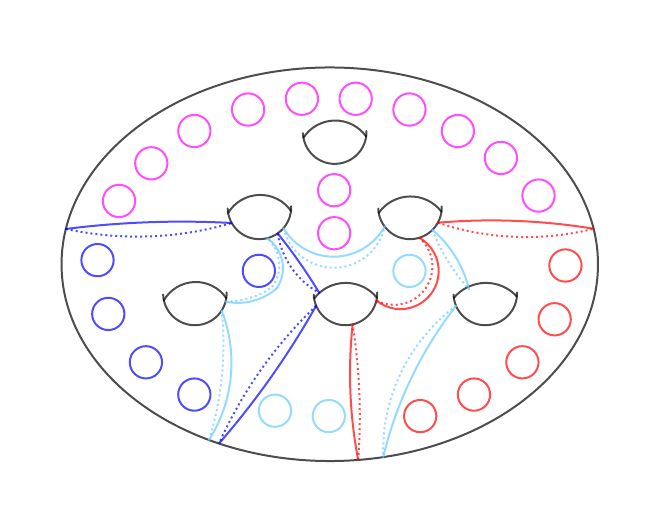}}
\hspace{0.1\textwidth}
\subfloat[$j$]{\includegraphics[width=0.4\textwidth]{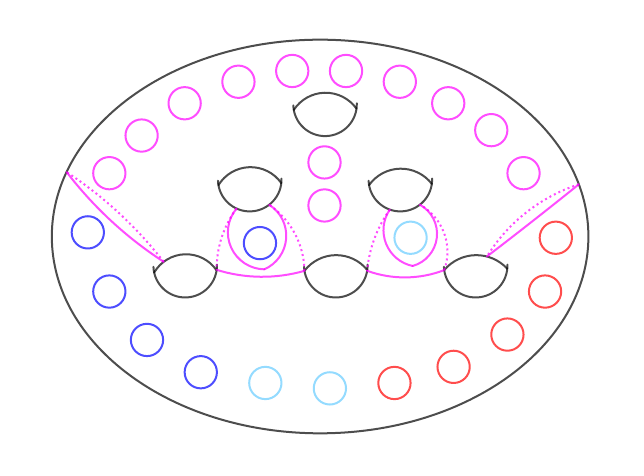}}
\caption{Embeddings of $\Sigma_1^8$ and $\Sigma_{g_{d-1}}^{(d-1)^2}$ into $\Sigma_{g_d}^{d^2}$ for $d=5$}
\label{fig:Embeddings}
\end{figure}

From this topological perspective, we can determine the location of $d^2$ many disjoint sections of the symplectic Lefschetz fibration whose total space is diffeomorphic to $\mathbb{P}^2\#d^2\overline{\mathbb{P}^2}$, as well as an explicit system of vanishing cycles for arbitrary $d\geq 4$. Another reason for the significance of finding the explicit words for Lefschetz pencils on $\mathbb{P}^2$ is its potential to serve as the building blocks for constructing new symplectic Lefschetz pencils or fibrations in the positive signature region. Moreover, such a boundary-interior relation itself corresponds to a symplectic surgery operation. In particular, the relation in the case of $d=4$ corresponds to the symplectic sum along two genus-$3$ symplectic surfaces with self-intersections $-16$ and $16$, respectively (cf.\cite{EG:10}).
Also, as a byproduct, in the proof of Theorem \ref{thm:first}, we obtain a genus $(d-2)$ Lefschetz pencil on $\mathbb{P}^2\#\overline{\mathbb{P}^2}$ with $4d-4$ base points and $8d-12$ critical points for each $d\geq 4$.
\begin{remark}
In fact, the total space $X_d$ in Theorem \ref{thm:first} is symplectomorphic to $\mathbb{P}^2$ since Taubes proved that $\mathbb{P}^2$ has a unique symplectic structure in \cite{Tau:95}. However, we don't know if $f_d:X_d\dashrightarrow\mathbb{P}^1$ is holomorphic or not. A Lefschetz pencil $f:X\dashrightarrow\mathbb{P}^1$ is said to be holomorphic if there exists a complex structure on $X$ such that $f$ is holomorphic and we can take biholomorphic $\phi,\psi,$ and $\xi$ in the definition of a Lefschetz pencil in section~\ref{sec:LP}.
\end{remark}
Auroux and Katzarkov \cite{AK:08} found a degree doubling formula which allows us to obtain the monodromy factorization of the Lefschetz pencil of degree $2d$ curves from those of the Lefschetz pencil of degree $d$ curves based on the braid monodromy technique. 
The above Theorem \ref{thm:first} can be considered as another inductive formula of the monodromy factorization for arbitrary $d$ (not only for even $d$) from that for $d-1$ based on the topological construction, so called the breeding method. 

\begin{corollary}\label{cor}
For arbitrary $d\geq 4$, there exists a symplectic Lefschetz pencil $f_d:X_d\dashrightarrow\mathbb{P}^1$ of genus $g_d=\frac{(d-1)(d-2)}{2}$ with $d^2$ base points and $3(d-1)^2$ critical points whose total space $X_d$ is diffeomorphic to $\mathbb{P}^2$ which can be constructed topologically by breeding $\frac{(d-1)(d-2)}{2}$ copies of the monodromy relations of genus $1$ holomorphic Lefschetz pencils on $\mathbb{P}^2$.  
\end{corollary}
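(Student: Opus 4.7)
The plan is to establish the corollary by induction on $d\geq 3$, tracking at each stage the number of copies of the genus $1$ holomorphic Lefschetz pencil relation on $\mathbb{P}^2$ (equivalently, the $9$-holed torus relation of \cite{KO:08}) required to produce the monodromy factorization in question. The base case is $d=3$: the $9$-holed torus relation itself is a single copy, in agreement with $\frac{(3-1)(3-2)}{2}=1$.

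For the inductive step, I assume that $f_{d-1}$ has been obtained by breeding $g_{d-1}=\frac{(d-2)(d-3)}{2}$ copies of the genus $1$ relation on $\mathbb{P}^2$. Theorem \ref{thm:first} expresses the monodromy factorization of $f_d$ as a subword of $f_{d-1}$'s factorization injected through $j$, concatenated with $d-2$ subwords injected through $i_1,\dots,i_{d-2}$ that each originate from an $8$-holed torus relation: one $\mathbb{A}$ piece coming from the genus $1$ pencil on $\mathbb{P}^2\#\overline{\mathbb{P}^2}$ and $d-3$ pieces of $\mathbb{B}$-type coming from the genus $1$ pencil on $\mathbb{P}^1\times\mathbb{P}^1$. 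The embedded $f_{d-1}$ subword already carries $g_{d-1}$ copies of the genus $1$ relation by the inductive hypothesis, so to close the step it suffices to verify that each of the $d-2$ insertions accounts for exactly one further copy of the genus $1$ pencil relation on $\mathbb{P}^2$.

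That verification reduces to the auxiliary statement that each flavor of the $8$-holed torus relation is itself obtainable by breeding a single copy of the $9$-holed torus relation with standard mapping class group relations such as the lantern, chain, and braid relations. Geometrically, a genus $1$ pencil on the rational surface $\mathbb{P}^2\#\overline{\mathbb{P}^2}$ or $\mathbb{P}^1\times\mathbb{P}^1$ becomes, after blowing up all base points, an elliptic fibration on $E(1)\cong\mathbb{P}^2\#9\overline{\mathbb{P}^2}$, which is the same total space as the blown-up Korkmaz--Ozbagci pencil; a single blowdown on the total space translates into a rewriting of the $9$-holed torus relation into an $8$-holed one. Granted this lemma, the arithmetic
\[
g_{d-1}+(d-2)=\frac{(d-2)(d-3)}{2}+(d-2)=\frac{(d-1)(d-2)}{2}=g_d
\]
closes the induction, and the assertion that $X_d\cong\mathbb{P}^2$ together with the counts of base points and critical points is inherited directly from Theorem \ref{thm:first}.

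I expect the main obstacle to be establishing the auxiliary lemma that each $8$-holed torus relation is a \emph{single} bred copy of the genus $1$ pencil relation on $\mathbb{P}^2$, rather than merely some algebraic consequence of it; the bookkeeping of vanishing cycles under the embeddings $i_k$ and $j$ and the identification of the relevant section classes need to be tracked carefully. I anticipate that the proof of Theorem \ref{thm:first} performs this reduction implicitly during the construction of the $\mathbb{A}$ and $\mathbb{B}$ pieces, so the corollary will amount to repackaging that construction and running the arithmetic driven by the recursion $g_d-g_{d-1}=d-2$.
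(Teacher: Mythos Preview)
Your inductive skeleton---assume $f_{d-1}$ is built from $g_{d-1}$ genus-$1$ relations and add the $d-2$ new pieces supplied by Theorem~\ref{thm:first}, then invoke $g_{d-1}+(d-2)=g_d$---is exactly what the paper does. Its proof of the Corollary simply unrolls the recursion and writes the subword $j(B_{3(d-2)^2}\cdots B_{d-1})$ explicitly as a product of $g_{d-1}$ pieces, each a subword of an $8$-holed or $9$-holed torus relation embedded via maps $i_k^{(\ell)}$.

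The discrepancy is your reading of ``genus~$1$ holomorphic Lefschetz pencils.'' You take this to mean the $9$-holed torus relation exclusively, and are then forced to posit an auxiliary lemma that each $8$-holed torus relation is itself a single bred copy of the $9$-holed one. The paper neither proves nor uses this: it counts the $8$-holed torus relations (on $\mathbb{P}^2\#\overline{\mathbb{P}^2}$ and on $\mathbb{P}^1\times\mathbb{P}^1$) directly among the $g_d$ copies. The Remark immediately following the Corollary says so explicitly (``several $8$-holed torus relations and a single $9$-holed torus relation''), and in Theorem~\ref{thm:second}(b) the promised three copies for $d=4$ are two $8$-holed relations plus one $9$-holed. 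Your heuristic for the auxiliary lemma---that all three genus-$1$ pencils blow up to $E(1)$---is correct at the level of total spaces, but it does not produce the $\mathbb{P}^1\times\mathbb{P}^1$ relation from the $9$-holed one by capping or breeding; that would require exhibiting a \emph{different} configuration of eight disjoint $(-1)$-sections in $E(1)$ and lifting the factorization back to $\Gamma_1^8$, which is a separate argument the paper nowhere carries out (in Step~1 of Lemma~\ref{lemma for $d=4$}(b) the $\mathbb{P}^1\times\mathbb{P}^1$ relation is taken as input, not derived). With the paper's intended reading you may delete the auxiliary lemma entirely, and what remains is the paper's proof.
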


\begin{remark}
We conjecture that the holomorphic Lefschetz pencil $h_d:\mathbb{P}^2\dashrightarrow\mathbb{P}^1$ for any $d\geq 4$ can be constructed topologically by breeding $\frac{(d-1)(d-2)}{2}$ copies of genus $1$ holomorphic Lefschetz pencils as in Corollary \ref{cor}, where we only need several $8$-holed torus relations and a single $9$-holed torus relation. We prove this for $d=4$ in the following Theorem \ref{thm:second} and leave the case $d\geq 5$ for future research.   
\end{remark}

\begin{theorem}\label{thm:second} 
\begin{itemize}
\item[(a)] The monodromy factorization of the genus $3$ holomorphic Lefschetz pencil $h_4:\mathbb{P}^2\dashrightarrow\mathbb{P}^1$, which is the composition of the Veronese embedding $v_{4}:\mathbb{P}^2\hookrightarrow\mathbb{P}^{14}$ of degree $4$ and a generic projection $\mathbb{P}^{14}\dashrightarrow\mathbb{P}^1$ is given by the following relation in $\Gamma_3^{16}$:
$$t_{c_{27}}\cdots t_{c_2}t_{c_1}=t_{\delta_1}t_{\delta_2}\cdots t_{\delta_{16}},$$
where the vanishing cycles $c_i$'s are shown as in Figure~\ref{fig:findvanishingcycles}. 
\item[(b)] 
The genus $3$ holomorphic Lefschetz pencil $h_4:\mathbb{P}^2\dashrightarrow\mathbb{P}^1$ can be constructed topologically by breeding three copies of the monodromy relations corresponding to the genus $1$ holomorphic Lefschetz pencils. 
 
\end{itemize}
\end{theorem}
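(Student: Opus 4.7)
\smallskip

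For part~(a), my plan is to compute the monodromy of $h_4$ by means of the braid monodromy technique in the spirit of Moishezon--Teicher, combined with the standard lift from braid monodromy factorizations to monodromy factorizations in the mapping class group of the fiber. Concretely, I would project the Veronese surface $v_4(\mathbb{P}^2)\subset\mathbb{P}^{14}$ generically to $\mathbb{P}^2$; the branch curve $B\subset\mathbb{P}^2$ is a plane curve of degree $12$ whose numbers of nodes and cusps are determined by the class formulas for $v_4(\mathbb{P}^2)$. Degenerating $v_4(\mathbb{P}^2)$ to a union of planes degenerates $B$ to a line arrangement whose braid monodromy is combinatorially transparent, and a careful Moishezon-style regeneration then yields the braid monodromy of $B$ itself in the complement of a generic vertical line. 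Lifting this factorization to the $4$-fold cover produces a factorization of $\prod_{j=1}^{16}t_{\delta_j}$ in $\Gamma_3^{16}$, once the reference fiber is identified with a smooth quartic curve carrying its $16$ base points as boundary circles; reading the Dehn twists off the lift gives the explicit vanishing cycles $c_1,\dots,c_{27}$ pictured in Figure~\ref{fig:findvanishingcycles}.

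For part~(b), I would invoke Theorem~\ref{thm:first} with $d=4$: it provides a symplectic Lefschetz pencil $f_4:X_4\dashrightarrow\mathbb{P}^1$ with $X_4\cong\mathbb{P}^2$ whose monodromy factorization
\[
j(B_{12}\cdots B_3)\,i_2(\mathbb{B}_1)\,i_1(\mathbb{A})\;=\;\prod_{j=1}^{16}T_{\delta_j}\quad\text{in } \Gamma_3^{16}
\]
is built from three subwords of the monodromy relations of three genus~$1$ holomorphic Lefschetz pencils---namely $h_3$ on $\mathbb{P}^2$ (the Korkmaz--Ozbagci $9$-holed torus relation), and the genus~$1$ pencils on $\mathbb{P}^1\times\mathbb{P}^1$ and $\mathbb{P}^2\#\overline{\mathbb{P}^2}$ (the two $8$-holed torus relations). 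Thus $f_4$ is by construction a breeding of three copies of genus~$1$ pencil relations. To transfer this conclusion to $h_4$ itself, I would prove $h_4\simeq f_4$ as Lefschetz pencils on $\mathbb{P}^2$ by showing that the factorization obtained in part~(a) can be brought to the bred factorization above via a sequence of Hurwitz moves, a global conjugation, and a permutation of boundary labels---the equivalence relation under which monodromy factorizations classify Lefschetz pencils (Kas, Matsumoto, Baykur--Hayano).

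The main obstacle is the braid monodromy computation in part~(a): a degree $12$ branch curve carries many cusps and nodes, so both the regeneration step and the subsequent lift to $\Gamma_3^{16}$ must be carried out with considerable care to pin down the actual isotopy classes of the $27$ vanishing cycles on a fixed model of $\Sigma_3^{16}$. Once part~(a) is in hand, the Hurwitz-equivalence task in part~(b) is in principle algorithmic but still delicate; the natural strategy is to cluster the $27$ vanishing cycles of part~(a) into packets of lengths $10$, $8$ and $9$ matching the subwords $j(B_{12}\cdots B_3)$, $i_2(\mathbb{B}_1)$ and $i_1(\mathbb{A})$, and then to identify each packet with its genus~$1$ building block using the known factorizations of the $9$- and $8$-holed torus relations.
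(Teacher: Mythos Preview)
Your plan for part~(a) is the same as the paper's: degenerate $V_4$ to a union of planes, compute the braid monodromy of the degenerated branch locus, regenerate via the Moishezon--Teicher lemmas, and lift the branch-point half-twists through the fiberwise branched cover. Two factual slips should be fixed, however. The branch curve of the generic projection $V_4\to\mathbb{P}^2$ has degree $36$, not $12$: the degenerated branch locus is an arrangement of $18$ lines (one per internal edge of the simplicial diagram for $(V_4)_0$), and each line doubles under regeneration. Correspondingly, the fiberwise cover $h_4^{-1}(a_0')\to\pi'^{-1}(a_0')$ is a $16$-fold simple branched cover (the degree of $V_4\subset\mathbb{P}^{14}$ is $4^2=16$), not $4$-fold. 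These numbers matter because they govern the monodromy representation $\pi_1(D\setminus Q)\to S_{16}$ through which the lifts of the half-twists are read off.

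For part~(b) your strategy diverges from the paper's, and the divergence is a genuine obstacle. You propose to take the specific pencil $f_4$ produced by Theorem~\ref{thm:first} and then establish a Hurwitz equivalence $f_4\simeq h_4$ by finding an explicit sequence of elementary transformations and a global conjugation carrying one $27$-letter word to the other. There is no algorithm for deciding Hurwitz equivalence of positive factorizations in $\Gamma_3^{16}$, and nothing in Theorem~\ref{thm:first} guarantees that its $f_4$ agrees with $h_4$; indeed the paper explicitly leaves open whether the $f_d$ are holomorphic. The paper avoids this difficulty by working in the opposite direction: it first applies a carefully chosen global conjugation to the factorization of $h_4$ from part~(a), and independently applies global conjugations to each of the three genus~$1$ building blocks \emph{before} breeding, so that the bred factorization coincides on the nose with the conjugated $h_4$ factorization. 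Because global conjugation of a building block preserves its isomorphism class as a genus~$1$ holomorphic pencil, this exhibits $h_4$ directly as a breeding of three genus~$1$ holomorphic pencil relations, without ever having to compare two independently produced length-$27$ words. Your clustering idea (packets of $10$, $8$, $9$) is the right shape, but the work lies in pre-adjusting both sides so that the packets match exactly, not in searching for Hurwitz moves after the fact.
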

The proof of Theorem~\ref{thm:second}(a) relies on the braid monodromy technique, developed by Moishezon and Teicher in \cite{MT:BGT1, MT:BGT2, MT:BGT3, MT:BGT4}, which we review in section~\ref{subsec:braidmonodromy}, and we introduce the $9$-holed torus breeding in section~\ref{subsec:breeding} for
the topological construction of Theorem~\ref{thm:first} and Theorem~\ref{thm:second}(b) . 

\section*{Acknowledgements}
The author is grateful to Jongil Park and Ki-Heon Yun for their interests and comments, and to the organizers of the conference "Current Developments in Algebra and Topology 2024" and "Deformation of Complex Singularities and Related topics" for the opportunity to present this work. This work was supported by the National Research Foundation of Korea (NRF) grant funded by the Korean Government (RS-2024-00392067). 

\section{Preliminaries}
\subsection{Lefschetz pencil and its monodromy factorization} \label{sec:LP}
Let X be a smooth, connected, closed, oriented $4$-manifold. 
A Lefschetz pencil on $X$ is a smooth surjective map $f:X\setminus B\rightarrow \mathbb{P}^1$ defined on the complement of a nonempty finite subset $B\subset X$ that is a submersion except at $C=\{p_1,\cdots,p_n\}$ satisfying the following conditions:
\begin{itemize}
\item for any critical point $p\in C$, 
there exist orientation-preserving complex charts $(U,\phi:U\rightarrow\mathbb{C}^2)$ about $p$ and $(V,\psi:V\rightarrow\mathbb{C})$ about $f(p)$ 
such that $\psi\circ f\circ\phi^{-1}(z,w)=z^2+w^2$, 
\item for any $b\in B$, called the base point, there exists a complex chart $(U,\phi)$ of $b$ compatible with the orientation of $X$ and an orientation preserving diffeomorphism $\xi:\mathbb{P}^1\rightarrow\mathbb{P}^1$ such that $\xi\circ f\circ\phi^{-1}(z,w)=[z:w]$ 
\item the restriction $f|_{crit(f)}$ is injective.
\end{itemize}

When the base locus $B$ is empty, the map $f:X\rightarrow\mathbb{P}^1$ defined above is called a Lefschetz fibration.
A fiber of a Lefschetz pencil or fibration containing a critical point is called a singular fiber, and it has a nodal singularity at the critical point by definition. When the regular fiber has genus $g$, we say that $(X,f)$ is a genus-$g$ pencil or fibration. Each singular fiber can be obtained from the nearby regular fiber by collapsing a simple closed curve to the critical point. Such a collapsing curve in the regular fiber, associated to each path from the regular value to the critical value, is called the vanishing cycle. And the local monodromy along the loop around one critical value is given by the right-handed Dehn twist along the vanishing cycle. Globally, according to Kas\cite{Kas:80} and Matsumoto\cite{Mat:96}, a genus $g$ Lefschetz fibration is completely characterized by its monodromy representation $\rho:\pi_1(S^2\setminus\{q_1,\cdots,q_n\},q_0)\rightarrow \Gamma_g$, up to equivalence relations, where $q_i=f(p_i)$ for $1\leq i\leq n$ and $\Gamma_g$ denotes the mapping class group of the closed oriented surface $\Sigma_g$ of genus $g$, the group of isotopy classes of all orientation preserving self-diffeomorphisms of $\Sigma_g$. In particular, for a given genus $g$-Lefschetz fibration over a $2$-sphere with $n$ critical points, one can associate a factorization of the identity into a product of $n$ right-handed Dehn twists in $\Gamma_g$. 
In fact, such a factorization is determined up to certain eqivalence relations coming from the choice of the ordered basis of $\pi_1(S^2\setminus\{q_1,\cdots,q_n\},q_0)$ and that of the identification of the fixed regular fiber $f^{-1}(q_0)$ with the standard surface $\Sigma_g$. 
Conversely, provided $g\geq 2$, given such a relation $t_{a_n}\cdots t_{a_2}t_{a_1}=1$ in $\Gamma_g$, one can construct a genus $g$ Lefschetz fibration over a $2$-sphere up to isomorphism. \\
Similarly, for $g\geq 2$, $g=1,b\geq 1$, or $g=0, b\geq 3$, there is a one-to-one correspondence between genus-$g$ Lefschetz pencils $(X,f)$ with $n$ critical points and $b$ base points, up to isomorphism, and positive factorizations of the boundary twist of the form 
\[t_{c_n}\cdots t_{c_2}t_{c_1}=t_{\delta_1}\cdots t_{\delta_b} ~~\text{in}~~ \Gamma_g^{b},\]
up to Hurwitz equivalence relation described below, where $\delta_1,\cdots,\delta_b$ are distinct boundary parallel simple closed curves of $\Sigma_g^b$ and each $c_i$ is a vanishing cycle corresponding to each critical point $p_i$ of $f$. 
The blow up at all the base points of a genus $g$-Lefschetz pencil $(X,f)$ with $b$ base points induces a genus $g$-Lefschetz fibration with $b$ many disjoint sections of self-intersection $-1$. Hence, there exists a positive relation $t_{c_n}\cdots t_{c_2}t_{c_1}=id$ in $\Gamma_g$, and we can find a lift $t_{c_n'}\cdots t_{c_2'}t_{c_1'}=1$ in $\Gamma_{g,b}$ preserving the set of $b$ distinguished points identified with the intersection of sections and the reference fiber, and its further lift $t_{c_n''}\cdots t_{c_2''}t_{c_1''}=t_{\delta_1}t_{\delta_2}\cdots t_{\delta_b}$ in $\Gamma_g^b$ under the capping homomorphism $\Gamma_g^b\rightarrow \Gamma_g$ which is surjective. In fact, the power $m_j$ of each boundary twist $t_{\delta_j}$ in the lift in $\Gamma_g^b$ encodes the self-intersection $-m_j$ of the corresponding section of the Lefschetz fibration in general. Conversely, such a relation in $\Gamma_g^b$ determines a genus $g$ Lefschetz fibration with $b$ disjoint $(-1)$ sections, and a genus $g$ Lefschetz pencil obtained by blowing down those $(-1)$ sections. \\
Two Lefschetz pencils $f_0:X_0\setminus B_0\rightarrow\mathbb{P}^1$ and $f_1:X_1\setminus B_1\rightarrow\mathbb{P}^1$ are said to be isomorphic if there exist orientation-preserving diffeomorphisms $\Phi:X_0\rightarrow X_1$ and $\phi:\mathbb{P}^1\rightarrow \mathbb{P}^1$ s.t. $\Phi(B_0)=B_1$ and $f_1\circ\Phi=\phi\circ f_0$.
Two monodromy factorizations are said to be Hurwitz equivalent if one can be obtained from the other by a finite sequence of the following two operations:
\begin{itemize}
\item $t_{c_n}\cdots t_{c_{i+1}}t_{c_i}\cdots t_{c_1} \sim t_{c_n}\cdots t_{t_{c_{i+1}}(c_i)}t_{c_{i+1}}\cdots t_{c_1}$
\item $t_{c_n}\cdots t_{c_1} \sim t_{\phi(c_n)}\cdots t_{\phi(c_1)}$ for some $\phi\in Mod(\Sigma_g^b;\{s_1,\cdots,s_b\})$
\end{itemize}
The first kind of operation, called the elementary transformation, comes from the choice of the ordered basis $(l_1,\cdots,l_n)$ of $\pi_1(S^2\setminus \{q_1,\cdots,q_n\}, q_0)$ such that each $l_i$ encircles one $q_i$ and $l_1l_2\cdots l_n$ is homotopically trivial. It is often given by a Hurwitz path system, that is an ordered system of simple paths $(\alpha_1,\cdots, \alpha_n)$ such that each $\alpha_i$ is a path from $q_0$ to $q_i$ and ordered counterclockwise around $q_0$, by connecting $q_0$ with a small circle around $q_i$ oriented counterclockwise by $\alpha_i$. 
The second kind of operation, called the global conjugation, comes from the choice of the identification of $\overline{f^{-1}(q_0)}\setminus \nu(B)$ with $\Sigma_g^b$. Note that we can take $\phi$ in $Mod(\Sigma_g^b;\{s_1,\cdots,s_b\})$, a larger group containing $\Gamma_g^b$ as a subgroup, allowing the base points to be interchanged (cf.\cite{BH:16})

\subsection{Braid monodromy technique}
\label{subsec:braidmonodromy}
Let $X\subset\mathbb{P}^N$ be a nonsingular projective surface and we consider generic projections $\pi:\mathbb{P}^N\dashrightarrow\mathbb{P}^2$ and $\pi':\mathbb{P}^2\dashrightarrow\mathbb{P}^1$. Then the restriction $\phi=\pi|_X:X\rightarrow\mathbb{P}^2$ is a branched covering whose branch curve $S\subset\mathbb{P}^2$ is a curve which has only nodes or cusp singularities and the composition $f=\pi'\circ\phi:X\dashrightarrow\mathbb{P}^1$ is a Lefschetz pencil whose fibers are given by the preimage of the fibers of $\pi'$ under $\phi$ and the base locus is given by the preimage of the base point of the pencil of lines on $\mathbb{P}^2$ under $\phi$. One can easily check that the set of critical points of $f$ consists of the critical points of $\phi$ whose image is a point of $S$ that is tangent to the fiber of $\pi'$. 
Moreover, one can read the monodromy of the Lefschetz pencil from the braid monodromy of the branch curve $S$ of $\phi:X\rightarrow\mathbb{P}^2$ particularly around the vertical tangent points, or the branch points of $\pi'|_S:S\rightarrow\mathbb{P}^1$.

For a given algebraic curve $S\subset\mathbb{C}^2$ of degree $n$ possibly with nodes or cusps, the braid monodromy of $S$ can be defined as follows. First we consider the projection $\pi:\mathbb{C}_{x,y}^2\rightarrow\mathbb{C}_{x}^1$ whose generic fiber intersects $S$ transversely. Let $N=\{x\in\mathbb{C}^1_{x}~|~\text{\#}(\pi^{-1}(x)\cap S)<n\}$ and $N'$ be the set of all special points of $S$  which are either nodes, cusps, or the smooth points of $S$ that are tangent to the fiber of $\pi$. Then $N=\pi(N')$. We assume that each fiber $\pi^{-1}(x)$ contains at most one special point. We take $E$ (resp. $D$) be a closed disk on the $x$-axis (resp. $y$-axis) such that $N'$ is contained in $E\times D$ and $N$ is contained in the interior of $E$. We fix a base point $u\in\partial E$ as a real point far enough from $N$ and we denote the set of the intersection points between $\pi^{-1}(u)$ and $S$ by $K=\{q_1,\cdots, q_n\}$. 

\begin{definition}
We define the braid monodromy of $S$ with respect to $(E\times D, \pi, u)$ as a well-defined group homomorphism
\[\rho:\pi_1(E\setminus N,u)\rightarrow B_n[D,K]\]
that maps each loop $l(t)$ to the braid induced by the motion $\{\pi_2\circ\gamma_1(t), \cdots, \pi_2\circ\gamma_n(t)\}$ of $n$ distinct points in $D$, where $\gamma_1(t),\cdots, \gamma_n(t)$ are $n$ paths in $(E\setminus N)\times D$ starting at $q_1,\cdots, q_n$ which are the liftings of the loop $l(t)$ and $\pi_2:E\times D\rightarrow D$ is the canonical projection.
\end{definition} 

One can also define the braid monodromy of an algebraic curve $\overline{S}\subset\mathbb{P}^2$ of degree $n$ by choosing a generic line $L_{\infty}$ at infinity so that it intersects $\overline{S}$ transversely and an affine coordinate $(x,y)$ in $\mathbb{C}^2=\mathbb{P}^2\setminus L_{\infty}$ so that the projection $\pi_1:S\rightarrow\mathbb{C}^1$ given by $(x,y)\mapsto x$ be generic.

Moishezon and Teicher computed the braid monodromy of the branch curve of the generic projection $\pi|_{V_3}:V_3\rightarrow\mathbb{P}^2$ from the Veronese surface $V_3$ of order 3 by using the degeneration and the regeneration technique in \cite{MT:BGT4}. We recall the definition of the projective degeneration below and the regeneration refers to the reverse deformation of the degeneration. 

\begin{definition}
We say that a projective surface $X_0\subset\mathbb{P}^N$ is a projective degeneration of another projective surface $X\subset\mathbb{P}^N$ if there exists an algebraic variety $W$ with its embedding $F:W\subset\mathbb{P}^N\times\mathbb{C}$ such that for the natural projection $\pi:W\rightarrow\mathbb{C}$ which is flat, the restriction $F|_{\pi^{-1}(t)}:\pi^{-1}(t)\rightarrow\mathbb{P}^N$ is a projective embedding for each $t$, $\pi^{-1}(0)\cong X_0$ and $\pi^{-1}(\epsilon)\cong X,$ for $\epsilon\neq0$.
\end{definition}

In \cite{MT:BGT3}, Moishezon and Teicher found a projective degneration of the surface $V_3$ into a union of planes intersecting along lines. In \cite{MT:BGT4}, they computed the braid monodromy of the branch curve of the projection from the degenerated surface $(V_3)_0$ to $\mathbb{P}^2$, which is a line arrangement in $\mathbb{P}^2$, and then analyzed how the branch curve and its braid monodromy change after regeneration. 
The branch curve, after regeneration, consists of the local configuration obtained after regeneration around each vertex, where $m\geq 2$ lines intersect in $X_0$, and additional nodes created after projecting to $\mathbb{P}^2$. In general, the degenerated branch curve may have singularity of arbitrary multiplicity $m$. However, if we restrict our attention to the Veronese surface $V_d$,
then only $m=2$ or $6$ is possible. Moishezon and Teicher computed the local braid monodromy $H_v$ around each vertex $v$ of type $2$-point or $6$-point after regeneration. 
\begin{lemma}\cite[Lemma 1]{MT:BGT4} 
Let $v=L_i\cap L_j$ be a $2$-point. Assume $i<j$ and $L_i$ regenerates before $L_j$ does. Then after regeneration around $v$, one branch point appears and the braid monodromy around the branch point appearing is given by the half-twist about the path shown in Figure \ref{fig:2pt}(b).

\label{lem:2pt}
\end{lemma}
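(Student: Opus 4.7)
The plan is to compute the local braid monodromy by explicitly modeling the regeneration in a bidisk around $v$ and tracking the motion of the fiberwise intersections with the branch curve as one traverses a small loop around the newly created branch value of $\pi$. I would first choose affine coordinates $(x,y)$ around $v=L_i\cap L_j$ so that the projection $\pi$ is $(x,y)\mapsto x$, $v$ sits at the origin, and $L_i,L_j$ have distinct nonvertical slopes $\alpha_i\neq\alpha_j$; in the degenerated branch curve $L_i$ and $L_j$ appear with multiplicity two, so locally near $v$ the degenerated branch curve is $L_i^2\cup L_j^2$.

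Next, since $L_i$ regenerates first, I would open $L_i^2$ into a smooth conic $Q_i^{(s)}$ given locally by $(y-\alpha_i x)^2=s\,g(x)$ for a small parameter $s$ and a nondegenerate linear $g$, while $L_j^2$ is left as a double line at this stage. Applying Riemann--Hurwitz to $\pi|_{Q_i^{(s)}}$, or inspecting the discriminant of $(y-\alpha_i x)^2-s\,g(x)=0$ in $y$, one finds that $Q_i^{(s)}$ has exactly one vertical tangency inside a small $x$-disk around $0$, producing a single branch point $x_0\in E$ of $\pi|_S$. The later regeneration of $L_j$ will contribute its own branch point, but that one lies far from $v$, so around $v$ exactly one branch point arises.

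To read off the braid, I take a real base point $u\in\partial E$ to the right of $x_0$. Over $u$ the local branch curve $S$ cuts out four points $q_{i,1},q_{i,2},q_{j,1},q_{j,2}\in D$: the two $q_{i,\ast}$ are the two sheets of $Q_i^{(s)}$ and lie close to $y=\alpha_i u$, while $q_{j,1},q_{j,2}$ sit at the still-doubled value $y=\alpha_j u$. Following the roots of the local equation as $x$ travels once counterclockwise around $x_0$, the two $q_{i,\ast}$ collide at the tangency and swap while $q_{j,1},q_{j,2}$ stay fixed, implementing a half-twist on the pair $\{q_{i,1},q_{i,2}\}$. The path supporting this half-twist is the trajectory in $D$ traced by the colliding pair, and its homotopy class relative to $K$ recovers exactly the path drawn in Figure~\ref{fig:2pt}(b).

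The main obstacle will be the last part: pinning down the correct homotopy class of the path in $D\setminus(K\setminus\{q_{i,1},q_{i,2}\})$, that is, determining on which side of the stationary pair $\{q_{j,1},q_{j,2}\}$ the moving pair travels during the collision. A purely topological count is insufficient here, because the answer depends sensitively on the real signs of $\alpha_i-\alpha_j$ and $s$, on the convention $i<j$, and on the hypothesis that $L_i$ regenerates \emph{before} $L_j$; reversing that order would reflect the path to the opposite side. Making the real-analytic track of the roots match the orientation convention of Figure~\ref{fig:2pt}(b) is the only delicate point of the proof.
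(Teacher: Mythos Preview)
The paper does not supply its own proof of this lemma: it is quoted from Moishezon--Teicher \cite[Lemma~1]{MT:BGT4} and used as a black box in Section~4.1. So there is no in-paper argument to compare your sketch against; any genuine comparison would have to be with the original Moishezon--Teicher proof, which is carried out through their specific sequence of partial regenerations of the Veronese surface rather than through an abstract local conic model.

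That said, a comment on your outline. Your model correctly explains why exactly one branch point appears near $v$ and why the local monodromy is a half-twist exchanging the two sheets $i,i'$ coming from $L_i$. However, the path in Figure~\ref{fig:2pt}(b) is more specific than a choice of ``which side of $\{j,j'\}$'': the arc leaves $i$, passes \emph{below} $i',j,j'$, loops around $j'$ on the right, and returns \emph{above} $j$ to land at $i'$, so that both $j$ and $j'$ lie inside the region bounded by the arc together with the short real segment from $i'$ back to $i$. In your model, with $L_j$ still a single (doubled) point and the tangency of $Q_i^{(s)}$ occurring at a point distinct from $v$, the na\"{\i}ve trajectory of the colliding $i$-sheets would yield an arc passing $\{j,j'\}$ on one side only. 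Getting the wrap around $j'$ requires tracking two further pieces of data that your sketch defers: the position of the new branch value relative to the image of $v$ under $\pi'$ and the specific Hurwitz path chosen from $a_0'$, and then the effect of the \emph{second} regeneration (that of $L_j$) which splits the single $q_j$ into the ordered pair $(j,j')$. This global bookkeeping, not the local Riemann--Hurwitz count, is the actual content of the Moishezon--Teicher lemma, and it is precisely what you flag as the ``main obstacle'' without resolving.
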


\begin{figure}[h]
\centering
\begin{tikzpicture}
\fill (0.5,0) circle (1.5pt);
\fill (-0.5,0) circle (1.5pt);
\draw (-0.5,0) to[out=0,in=180] (0.5,0);
\node at (0.5,-0.5) {j};
\node at (-0.5,-0.5) {i};
\node at (0,-1) {(a)};
\fill (4.5,0) circle (1.5pt);
\fill (5,0) circle (1.5pt);
\fill (5.5,0) circle (1.5pt);
\fill (6,0) circle (1.5pt);
\draw (4.5,0) to[out=330,in=180] (6,-0.3);
\draw (6,-0.3) to[out=0,in=360] (6,0.3);
\draw (6,0.3) to[out=180,in=30] (5,0);
\node at (4.5,-0.5) {i};
\node at (5,-0.5) {i'};
\node at (5.5,-0.5) {j};
\node at (6,-0.5) {j'};
\node at (5.25,-1) {(b)};
\end{tikzpicture}

\caption{Lefschetz vanishing cycles of (a) a $2$-point and (b) the branch point after regeneration around a $2$-point}
\label{fig:2pt}
\end{figure}
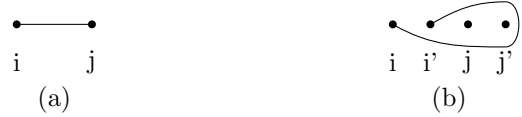

\begin{center}
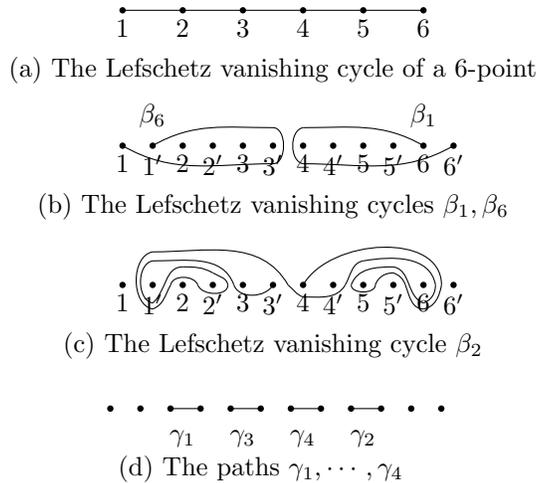
\begin{figure}[h]
\begin{tikzpicture}[scale=0.8]
\fill (1,0) circle (1.5pt) node[anchor=north] {$1$};

\fill (2,0) circle (1.5pt) node[anchor=north] {$2$};

\fill (3,0) circle (1.5pt) node[anchor=north] {$3$};

\fill (4,0) circle (1.5pt) node[anchor=north] {$4$};

\fill (5,0) circle (1.5pt) node[anchor=north] {$5$};

\fill (6,0) circle (1.5pt) node[anchor=north] {$6$};

\draw (1,0) to[out=0,in=180] (2,0);
\draw (2,0) to[out=0,in=180] (3,0);
\draw (3,0) to[out=0,in=180] (4,0);
\draw (4,0) to[out=0,in=180] (5,0);
\draw (5,0) to[out=0,in=180] (6,0);

\node at (3.5,-1) {(a) The Lefschetz vanishing cycle of a $6$-point};
\end{tikzpicture}

\begin{tikzpicture}[scale=0.8]
\fill (1,0) circle (1.5pt) node[anchor=north] {$1$};
\fill (1.5,0) circle (1.5pt) node[anchor=north] {$1'$};
\draw (1,0) to[out=330,in=180] (3.5,-0.3);
\draw (3.5,-0.3) to[out=0,in=360] (3.5,0.3);
\draw (3.5,0.3) to[out=180,in=30] (1.5,0);

\fill (2,0) circle (1.5pt) node[anchor=north] {$2$};
\fill (2.5,0) circle (1.5pt) node[anchor=north] {$2'$};
\fill (3,0) circle (1.5pt) node[anchor=north] {$3$};
\fill (3.5,0) circle (1.5pt) node[anchor=north] {$3'$};
\fill (4,0) circle (1.5pt) node[anchor=north] {$4$};
\fill (4.5,0) circle (1.5pt) node[anchor=north] {$4'$};
\fill (5,0) circle (1.5pt) node[anchor=north] {$5$};
\fill (5.5,0) circle (1.5pt) node[anchor=north] {$5'$};
\fill (6,0) circle (1.5pt) node[anchor=north] {$6$}; 
\fill (6.5,0) circle (1.5pt) node[anchor=north] {$6'$};

\draw (6,0) to[out=150,in=360] (4,0.3);
\draw (4,0.3) to[out=190,in=170] (4,-0.3);
\draw (4,-0.3) to[out=0,in=210] (6.5,0);
\node at (6,0.5) {$\beta_1$};
\node at (1.5,0.5) {$\beta_6$};

\node at (3.5,-1) {(b) The Lefschetz vanishing cycles $\beta_1, \beta_6$};
\end{tikzpicture}

\begin{tikzpicture}[scale=0.8]
\fill (1,0) circle (1.5pt) node[anchor=north] {$1$};
\fill (1.5,0) circle (1.5pt) node[anchor=north] {$1'$};
\fill (2,0) circle (1.5pt) node[anchor=north] {$2$};
\fill (2.5,0) circle (1.5pt) node[anchor=north] {$2'$};
\fill (3,0) circle (1.5pt) node[anchor=north] {$3$};
\fill (3.5,0) circle (1.5pt) node[anchor=north] {$3'$};
\fill (4,0) circle (1.5pt) node[anchor=north] {$4$};
\fill (4.5,0) circle (1.5pt) node[anchor=north] {$4'$};
\fill (5,0) circle (1.5pt) node[anchor=north] {$5$};
\fill (5.5,0) circle (1.5pt) node[anchor=north] {$5'$};
\fill (6,0) circle (1.5pt) node[anchor=north] {$6$}; 
\fill (6.5,0) circle (1.5pt) node[anchor=north] {$6'$};

\draw (3.5,0) to[out=240,in=290] (2.85,0);
\draw (2.85,0) to[out=110,in=0] (1.5,0.4);
\draw (1.5,0.4) to[out=190,in=170] (1.5,-0.3);
\draw (1.5,-0.3) to[out=40,in=180] (2,0.3);
\draw (2,0.3) to[out=0,in=150] (2.5,0.2);
\draw (2.5,0.2) to[out=330, in=100] (2.75,0);
\draw (2.75,0) to[out=270,in=290] (2.25,0);
\draw (2.25,0) to[out=110,in=80] (1.8,0);
\draw (1.8,0) to[out=240,in=20] (1.5,-0.4);
\draw (1.5,-0.4) to[out=170,in=190] (1.5,0.55);
\draw (1.5,0.55) to[out=360,in=120] (3.75,0);
\draw (3.75,0) to[out=320,in=160] (4,-0.2);
\draw (4,-0.2) to[out=0,in=240] (4.7,0);
\draw (4.7,0) to[out=60,in=190] (5,0.4);
\draw (5,0.4) to[out=10,in=100] (6.2,0);
\draw (6.2,0) to[out=270,in=20] (6,-0.3);
\draw (6,-0.3) to[out=160,in=350] (5.5,0.3);
\draw (5.5,0.3) to[out=180,in=70] (4.8,0);
\draw (4.8,0) to[out=270,in=270] (5.2,0);
\draw (5.2,0) to[out=70,in=180] (5.5,0.2);
\draw (5.5,0.2) to[out=340,in=110] (5.7,0);
\draw (5.7,0) to[out=290,in=180] (6,-0.4);
\draw (6,-0.4) to[out=10,in=260] (6.3,0);

\draw (6.3,0) to[out=90,in=60] (4,0);

\node at (3.5,-1) {(c) The Lefschetz vanishing cycle $\beta_2$};
\end{tikzpicture}

\begin{tikzpicture}[scale=0.8]
\node at (1,0.5) {}; 
\fill (1,0) circle (1.5pt); 
\fill (1.5,0) circle (1.5pt); 
\fill (2,0) circle (1.5pt); 
\draw (2,0) to[out=0,in=180] (2.5,0);
\node at (2.2,-0.5) {$\gamma_1$};
\fill (2.5,0) circle (1.5pt); 
\fill (3,0) circle (1.5pt); 
\draw (3,0) to[out=0,in=180] (3.5,0);
\node at (3.2,-0.5) {$\gamma_3$};
\fill (3.5,0) circle (1.5pt); 
\fill (4,0) circle (1.5pt); 
\draw (4,0) to[out=0,in=180] (4.5,0);
\node at (4.2,-0.5) {$\gamma_4$};
\fill (4.5,0) circle (1.5pt); 
\fill (5,0) circle (1.5pt); 
\draw (5,0) to[out=0,in=180] (5.5,0);
\node at (5.2,-0.5) {$\gamma_2$};
\fill (5.5,0) circle (1.5pt);
\fill (6,0) circle (1.5pt);  
\fill (6.5,0) circle (1.5pt); 

\node at (3.5,-1) {(d) The paths $\gamma_1,\cdots,\gamma_4$};
\end{tikzpicture}
\caption{Lefschetz vanishing cycles of the branch points after regeneration around a $6$-point}
\label{fig:6point}
\end{figure}
\end{center}

\begin{lemma}\cite[Lemma 5,6,7,8,9, and 11]{MT:BGT4} 
Let $v=L_1\cap L_ 2\cap L_3\cap L_4\cap L_5\cap L_6$ be a $6$-point. Assume $L_1$ and $L_6$ regenerate first at the same time, then $L_2$ and $L_5$ regenerate at the same time, and lastly $L_3$ and $L_4$ regenerate.
After regeneration around a $6$-point, six branch points appear and there exists a system of simple paths $(\alpha_1,\cdots,\alpha_6)$ from a regular value $a_0$ to one of the image of the six branch points such that the corresponding Lefschetz vanishing cycle of the braid monodromy is given by the path $\beta_i$ (for $i=1,\cdots 6$), where $\beta_1$ and $\beta_6$ are shown in Figure~\ref{fig:6point}(b), $\beta_2$ is shown in Figure~\ref{fig:6point}(c), and
$\beta_3=\tau_{\gamma_3}^{-1}\tau_{\gamma_4}^{-1}(\beta_2), \beta_4=\tau_{\gamma_1}^{-1}\tau_{\gamma_2}^{-1}(\beta_2), \beta_5=\tau_{\gamma_1}^{-1}\tau_{\gamma_2}^{-1}\tau_{\gamma_3}^{-1}\tau_{\gamma_4}^{-1}(\beta_2)$
where the paths $\gamma_1,\cdots,\gamma_4$ are given in Figure~\ref{fig:6point}(d).
\label{lem:6pt}
\end{lemma}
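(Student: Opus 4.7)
My strategy is to execute the three-stage regeneration of the branch curve near $v$ explicitly and, at each stage, read off the vanishing cycle of each newly created branch point by invoking the regeneration rules. I would begin by fixing a bidisk $E\times D$ centered at $v$ together with a real reference point $a_0\in E$ whose fiber meets the fully regenerated branch curve in twelve points $1,1',\ldots,6,6'$, ordered left-to-right on $D$ so that $\{i,i'\}$ replaces $L_i$. Six branch points $w_1,\ldots,w_6$ appear in $E$, one per line, and I would order them according to the three prescribed regeneration stages and choose a Hurwitz path system $(\alpha_1,\ldots,\alpha_6)$ matching this ordering.

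In the first stage, where $L_1$ and $L_6$ regenerate simultaneously, I would treat each as an instance of the $2$-point case from Lemma~\ref{lem:2pt} ``plugged into'' the full $6$-point configuration: the partner line is replaced by the block of five remaining concurrent lines, so the associated half-twist must wrap around the entire block of the other ten points. This yields the outermost arcs $\beta_1$ and $\beta_6$ shown in Figure~\ref{fig:6point}(b). In the second stage, I would compute $\beta_2$ by combining two effects. The half-twist from $L_2$'s regeneration naively connects $2$ and $2'$ while encircling the still-unregenerated inner points $\{3,3',4,4',5,5'\}$, but the already-regenerated outer pairs $\{1,1'\}$ and $\{6,6'\}$ force the path to weave through them according to the regeneration chain rule, producing the multiply-twisted arc of Figure~\ref{fig:6point}(c).

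Once $\beta_2$ is in hand, I would obtain $\beta_3,\beta_4,\beta_5$ by the change-of-basepoint formula for braid monodromy. The remaining four branch points (from $L_3$, $L_4$, and the partner of $L_2$, $L_5$) cluster near $w_2$ and are joined to it by the arcs $\gamma_1,\ldots,\gamma_4$ of Figure~\ref{fig:6point}(d); transporting $\beta_2$ along each $\gamma_i$ amounts to conjugation by the half-twist $\tau_{\gamma_i}$, which gives the stated formulas $\beta_3=\tau_{\gamma_3}^{-1}\tau_{\gamma_4}^{-1}(\beta_2)$, $\beta_4=\tau_{\gamma_1}^{-1}\tau_{\gamma_2}^{-1}(\beta_2)$, and $\beta_5=\tau_{\gamma_1}^{-1}\tau_{\gamma_2}^{-1}\tau_{\gamma_3}^{-1}\tau_{\gamma_4}^{-1}(\beta_2)$.

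The hard part will be the explicit determination of $\beta_2$: its shape records a delicate interplay between the outer regenerated strands $\{1,1'\}$, $\{6,6'\}$ and the inner still-singular lines $L_3,L_4,L_5$, and bookkeeping how the path winds around each of the ten other marked points requires a careful case-by-case application of the $2$-point and $3$-point regeneration rules inside the full $6$-point configuration. This is precisely why Moishezon and Teicher spread their computation across several lemmas in \cite{MT:BGT4}.
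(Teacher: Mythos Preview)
The paper does not prove this lemma: it is quoted verbatim as a compilation of Lemmas~5--9 and~11 from \cite{MT:BGT4}, with the figures redrawn but no argument reproduced. So there is no proof in the present paper against which to compare your attempt; any comparison would have to be made against the original Moishezon--Teicher computations.

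That said, your outline contains a concrete inaccuracy worth flagging. You write that $\beta_1$ and $\beta_6$ are obtained by treating the first-stage regeneration of $L_1$ (resp.\ $L_6$) as a $2$-point case against ``the block of five remaining concurrent lines,'' so that each arc ``must wrap around the entire block of the other ten points.'' This does not match Figure~\ref{fig:6point}(b): there $\beta_1$ is an arc joining $6$ to $6'$ that encircles only the four points $4,4',5,5'$, and $\beta_6$ joins $1$ to $1'$ encircling only $2,2',3,3'$. In the Moishezon--Teicher analysis the first-stage branch points for the outermost lines interact only with the \emph{adjacent inner triple} (after it is doubled), not with all five remaining lines; the staged regeneration is more local than your description suggests. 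Your treatment of $\beta_3,\beta_4,\beta_5$ as conjugates of $\beta_2$ by the half-twists $\tau_{\gamma_i}$ is correct, since that is exactly what the lemma asserts, and your closing remark that the delicate part is the explicit shape of $\beta_2$ is accurate and is indeed why the cited source spreads the computation over several lemmas.
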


\section{Topological construction of symplectic Lefschetz pencils on $\mathbb{P}^2$}
\begin{center}
\begin{figure}
\includegraphics[width=0.55\textwidth]{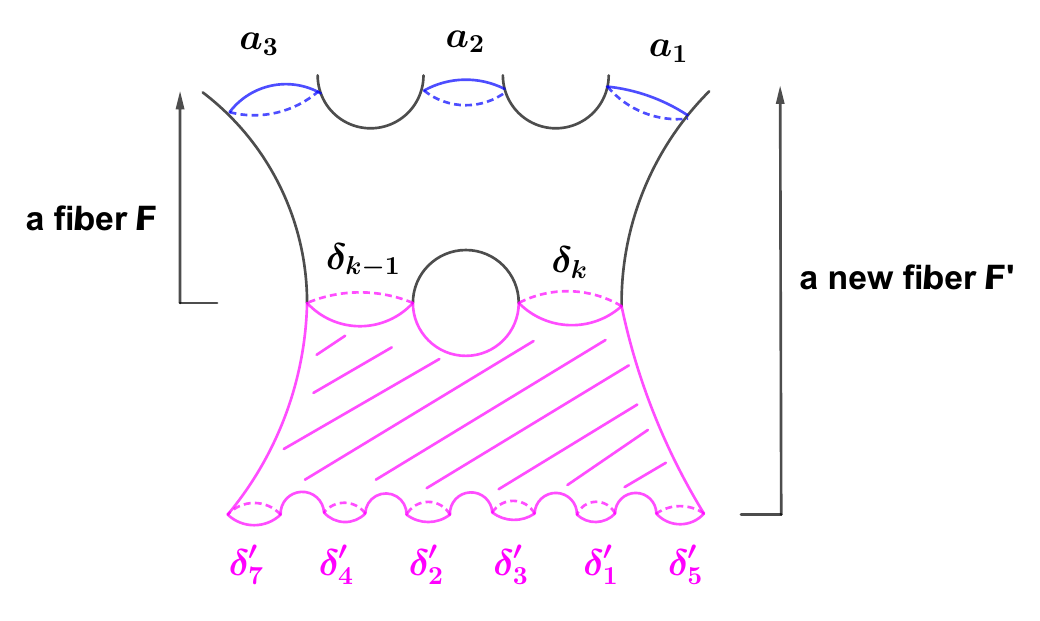}
\caption{A $9$-holed torus breeding}
\label{fig:nineholedtorusbreeding}
\end{figure}
\end{center}
\subsection{A $9$-holed torus breeding}
\label{subsec:breeding}
In order to prove Theorem \ref{thm:first}, we employ the breeding technique in the mapping class group which is useful for constructing a new Lefschetz pencil from the old one. For example, one can refer to \cite{KO:08,Ham:17} for a lantern breeding and \cite{Tan:12} for an $8$-holed torus breeding. Here, we introduce a $9$-holed torus breeding which allows to construct a genus $g+1$ Lefschetz pencil with $k+4$ base points and $N+7$ critical points from a given genus $g$ Lefschetz pencil with $k$ base points and $N$ critical points.
Suppose that we have a positive factorization of the boundary multitwist of the following form :
\[(t_{a_3} t_{a_2} t_{a_1})\cdot W = (t_{\delta_k} t_{\delta_{k-1}})\cdot t_{\delta_{k-2}}\cdots t_{\delta_1} ~\text{in} ~ \Gamma_g^k,\]
where three vanishing cycles $\{a_1,a_2,a_3\}$ of the corresponding Lefschetz pencil together with its two boundary parallel curves $\{\delta_{k-1},\delta_k\}$ cobound a subsurface diffeomorphic to $\Sigma_0^5$ embedded in $\Sigma_g^{k}$ as in Figure \ref{fig:nineholedtorusbreeding}. Then we can obtain the following new positive factorization of the boundary multitwist :
\[(t_{B_{12}}\cdots t_{B_4} t_{B_3})\cdot W = (t_{\delta_7'}t_{\delta_5'}t_{\delta_4'}t_{\delta_3'}t_{\delta_2'}t_{\delta_1'})\cdot t_{\delta_{k-2}}\cdots t_{\delta_1} ~\text{in} ~ \Gamma_{g+1}^{k+4}.\] 
where the new fiber diffeomorphic to $\Sigma_{g+1}^{k+4}$ is obtained by attaching $\Sigma_0^8$, the pink region in Figure \ref{fig:nineholedtorusbreeding}, to the old fiber diffeomorphic to $\Sigma_g^{k}$ along $\delta_{k-1}$ and $\delta_k$.
This is a consequence of a $9$-holed torus relation of the form \[t_{B_{12}}\cdots t_{B_4}t_{B_3}t_{\delta_k}t_{\delta_{k-1}}=t_{\delta_7'}t_{\delta_5'}t_{\delta_4'}t_{\delta_3'}t_{\delta_2'}t_{\delta_1'}t_{a_3}t_{a_2}t_{a_1},\]
whose support is a subsurface diffeomorphic to $\Sigma_1^9$ cobounded by $a_1,a_2,a_3$ and $\delta_1',\delta_2',\delta_3',\delta_4',\delta_5',\delta_7'$ as shown in Figure \ref{fig:nineholedtorusbreeding}. 
We call this operation of building a new Lefschetz pencil the $9$-holed torus breeding. 

\subsection{Topological construction of Lefschetz pencil on $\mathbb{P}^2$ for $d=4$}

\begin{center}
\begin{figure}
\includegraphics[width=0.5\textwidth]{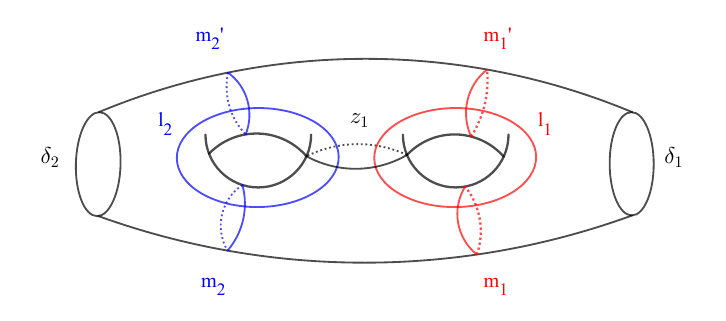}
\caption{Two copies of the three holed torus in $\Sigma_2^2$}
\label{fg:twothreeholedtorus}
\end{figure}

\begin{figure}
\includegraphics[width=0.6\textwidth]{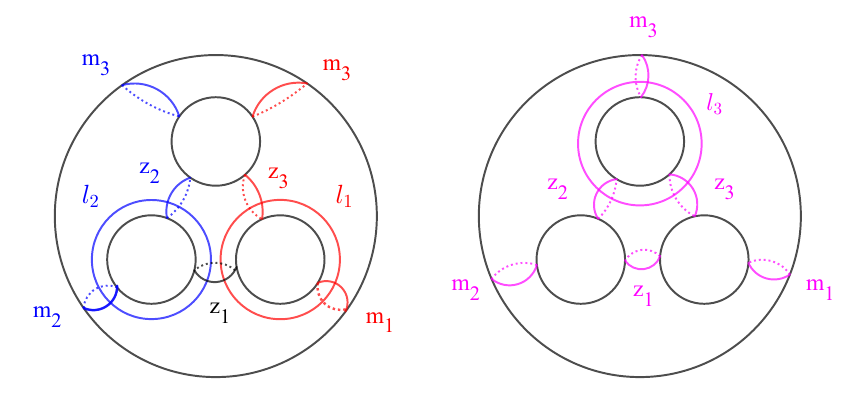}
\caption{Embedding of $\Sigma_2^2$ and $\Sigma_1^3$ into $\Sigma_3$}
\label{fg:embeddingsintotheclosedthree}
\end{figure}
\end{center}

\begin{lemma}
\begin{itemize}
\item[(a)] In $\Gamma_3$, the following relation holds
\[(t_{m_3}t_{z_2}t_{z_3}t_{t_{m_3}^{-1}(l_3)}t_{z_2}t_{z_3}t_{l_3}t_{z_2}t_{z_3}t_{t_{m_3}(l_3)})(t_{t_{m_2}^{-1}(l_2)}t_{z_2}t_{z_1}t_{l_2}t_{z_2}t_{z_1}t_{t_{m_2}(l_2)}t_{m_2})\]
\[(t_{m_1}t_{z_1}t_{t_{m_1}^{-1}(l_1)}t_{z_3}t_{z_1}t_{l_1}t_{z_3}t_{z_1}t_{t_{m_1}(l_1)})=1,\]
where all the Dehn twist curves are as in Figure \ref{fg:embeddingsintotheclosedthree}.
This corresponds to a genus $3$ Lefschetz fibration over $S^2$ with $27$ critical points whose total space is diffeomorphic to $\mathbb{P}^2\#16\overline{\mathbb{P}^2}$.
\item[(b)] Moreover, this genus $3$ Lefschetz fibration can be obtained by blowing up at all the base points from the genus $3$ Lefschetz pencil $f_4:X_4\dashrightarrow \mathbb{P}^1$ with $16$ base points and $27$ critical points whose monodromy factorization is given by the following relation in $\Gamma_3^{16}$: 
\[(t_{a_1^{(3)}}t_{b_2^{(3)}}t_{b_3^{(3)}}t_{a_4^{(3)}}t_{b_4^{(3)}}t_{b_5^{(3)}}t_{b_6^{(3)}}t_{\tilde{b}_7^{(3)}}t_{\tilde{b}_8^{(3)}}t_{\tilde{b}_1^{(3)}})(t_{b_7^{(2)}}t_{b_8^{(2)}}t_{a_1^{(2)}}t_{b_1^{(2)}}t_{b_2^{(2)}}t_{\tilde{b}_3^{(2)}}t_{\tilde{b}_4^{(2)}}t_{\tilde{b}_6^{(2)}})\]\[(t_{\tilde{b}_1^{(1)}}t_{\tilde{b}_2^{(1)}}t_{\tilde{b}_3^{(1)}}t_{b_5^{(1)}}t_{b_6^{(1)}}t_{a_7^{(1)}}t_{b_7^{(1)}}t_{b_8^{(1)}}t_{b_9^{(1)}})\]
\[=(t_{\delta_7'}t_{\delta_4'}t_{\delta_2'}t_{\delta_1'}t_{\delta_3'}t_{\delta_5'})(t_{d_3}t_{d_5}t_{d_6}t_{d_7}t_{d_8})(t_{\delta_4}t_{\delta_5}t_{\delta_7}t_{\delta_8}t_{\delta_9}).\]
where all the vanishing cycles are as in Figure \ref{fig:vanishingcycles3}.
\end{itemize}
\begin{figure}
\includegraphics[width=1.0\textwidth]{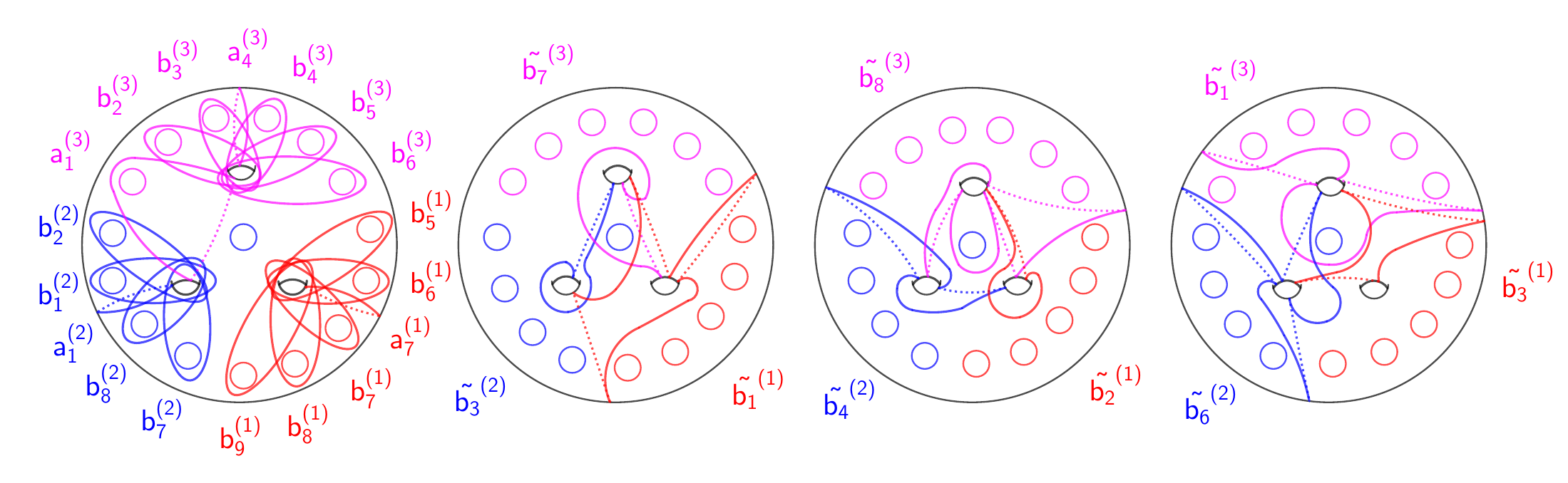}
\caption{Vanishing cycles for $f_4:X_4\dashrightarrow\mathbb{P}^1$}
\label{fig:vanishingcycles3}
\end{figure}
\begin{proof}
(a) First, we construct a relation of a genus $2$ Lefschetz pencil with two base points and $20$ critical points by breeding two copies of the $3$-holed torus relations. For the embeddings of $\Sigma_1^3$ into $\Sigma_2^2$, see the Figure ~\ref{fg:twothreeholedtorus}. 
The following relations hold in $\Gamma_2^2$, 
\[(t_{m_2}t_{m_2'}t_{z_1}t_{l_2})^3=t_{\delta_2}t_{m_1}t_{m_1'},\]
\[(t_{z_1}t_{l_1})(t_{m_1}t_{m_1'}t_{z_1}t_{l_1})^2(t_{m_1}t_{m_1'})=t_{m_2}t_{m_2'}t_{\delta_1}.\]
Hence, 
\begin{equation}
(t_{z_1}t_{l_2})(t_{m_2}t_{m_2'}t_{z_1}t_{l_2})^2(t_{z_1}t_{l_1})(t_{m_1}t_{m_1'}t_{z_1}t_{l_1})^2=t_{\delta_1}t_{\delta_2}.
\label{eq:fromtwothreeholedtorus}
\end{equation}
Next, we consider the following third copy of the three holed torus relation whose support is embedded in $\Sigma_3$ as in Figure ~\ref{fg:embeddingsintotheclosedthree}:
\[(t_{m_3}t_{z_2}t_{z_3}t_{l_3})^3=t_{m_2}t_{z_1}t_{m_1}.\]
After several Hurwitz moves and a cyclic permutation, we have
\begin{equation}
(t_{m_3}t_{m_3})(t_{z_2}t_{z_3}t_{l_3}t_{m_3}t_{z_2}t_{z_3}t_{l_3}t_{z_2}t_{z_3}t_{t_{m_3}(l_3)})=t_{m_2}t_{z_1}t_{m_1}.
\label{eq:thirdthreeholedtorus}
\end{equation}
Moreover, we can embed the relation \eqref{eq:fromtwothreeholedtorus} in $\Gamma_2^2$ into $\Gamma_3$ as 
\[(t_{z_1}t_{l_2})(t_{m_2}t_{z_2}t_{z_1}t_{l_2})^2(t_{z_1}t_{l_1})(t_{m_1}t_{z_3}t_{z_1}t_{l_1})^2=t_{m_3}t_{m_3}\]
where the curves are as shown in Figure ~\ref{fg:embeddingsintotheclosedthree}. The Hurwitz moves and a cyclic permutation yield \begin{equation}
(t_{t_{m_2}^{-1}(l_2)}t_{z_2}t_{z_1}t_{l_2}t_{m_2}t_{z_2}t_{z_1}t_{l_2})(t_{z_1}t_{l_1}t_{m_1}t_{z_3}t_{z_1}t_{l_1}t_{z_3}t_{z_1}t_{t_{m_1}(l_1)})(t_{m_1}t_{z_1}t_{m_2})=t_{m_3}t_{m_3}.
\label{eq:firsttwothreeholedtorus}
\end{equation}
Now, by breeding this relation \eqref{eq:firsttwothreeholedtorus} with the relation \eqref{eq:thirdthreeholedtorus} we get the following desired relation in $\Gamma_3$: 
\begin{multline}
(t_{z_2}t_{z_3}t_{l_3}t_{m_3}t_{z_2}t_{z_3}t_{l_3}t_{z_2}t_{z_3}t_{t_{m_3}(l_3)})(t_{t_{m_2}^{-1}(l_2)}t_{z_2}t_{z_1}t_{l_2}t_{m_2}t_{z_2}t_{z_1}t_{l_2})\\
(t_{z_1}t_{l_1}t_{m_1}t_{z_3}t_{z_1}t_{l_1}t_{z_3}t_{z_1}t_{t_{m_1}(l_1)})=1.
\label{eq:genusthreeLF}
\end{multline}
We will show, in the proof of the second statement (b) of this Lemma, that the corresponding genus $3$ Lefschetz fibration over $S^2$ admits $16$ disjoint $(-1)$-sections, denoted by $S_1,\cdots, S_{16}$. According to the theorem by Gompf, the total space $\tilde{X_4}=X_4\#16\overline{\mathbb{P}^2}$ admits a symplectic form $w$ such that all these sections are symplectic. Hence, we may assume that $[S_i]\cdot [w]>0$ for all $i$ by changing the orienation if necessary. Now, by the result of Sato in \cite{Sa:08}, if $\tilde{X_4}$ is neither rational nor ruled, then the number of $E_i$'s, the poincare dual of the homology classes $[S_i]$ of smoothly embedded spheres of square $-1$ such that $E_i\cdot[w]>0$, must be at most $2g-2=2$, which is a contradiction. Therefore, $\tilde{X_4}$ is either rational or ruled. 
Moreover, since the vanishing cycles include all the meridian and the longitude curves $\{m_1,l_1,m_2,l_2,m_3,l_3\}$ its fundamental group is trivial and hence $\tilde{X_4}$ cannot be irrational ruled. The Euler characteristic of the total space of a genus $3$ Lefschetz fibration over $S^2$ with $27$ critical points is $2\cdot(2-6)+27=19$. The signature of the total space of the Lefschetz fibration is equal to $-15$, determined by the signature of the relations by Endo and Nagami in \cite{EN:04}, since the signature $I_3$ of the $3$-holed torus relation $E$ is equal to $-5$. Therefore, $\tilde{X_4}$ is diffeomorphic to $\mathbb{P}^2\#16\overline{\mathbb{P}^2}$. \\
(b) In order to find the lift of the relation \eqref{eq:genusthreeLF} in $\Gamma_3$ to the desired relation in $\Gamma_3^{16}$, we will breed two copies of the $8$-holed torus relations and one copy of the $9$-holed torus relation instead of breeding three copies of the $3$-holed torus relations. \\
\textbf{Step1} : First, we need to rearrange the monodromy factorizations for three building blocks so that we can obtain the desired relation by breeding method. \\
For simplicity, we will denote the right-handed Dehn twist along $\alpha$ also by $\alpha$, and its inverse by $\overline{\alpha}$.
To obtain the first building block, we begin with the $9$-holed torus relation corresponding to the genus $1$ holomorphic Lefschetz pencil $f_n:\mathbb{P}^2\dashrightarrow\mathbb{P}^1$ which is the composition of the Veronese embedding $v_3:\mathbb{P}^2\hookrightarrow\mathbb{P}^9$ of degree $3$ and a generic projection $\mathbb{P}^9\dashrightarrow\mathbb{P}^1$ (we adopt the same notation for the vanishing cycles below as in Figure $24$(a) in \cite{HH:21}) and apply the following sequence of Hurwitz moves.  
\begin{align*}
\prod_{j=1}^{9}T_{\delta_j}& =a_1b_1b_2b_3a_4b_4b_5b_6a_7b_7b_8b_9\\
                           &\simeq a_1b_1b_2b_3c_{11}'c_{12}b_5b_6a_7b_7b_8b_9\\
                           &\simeq a_1b_1b_2c_{11}'{}_{\overline{c}_{11}'}(b_3)a_4b_5b_6a_7b_7b_8b_9\\
                           &\simeq a_1b_1c_{11}b_2{}_{\overline{c}_{11}'}(b_3)a_4b_5b_6a_7b_7b_8b_9\\
                           &\simeq a_1c_{11}{}_{\overline{c}_{11}}(b_1)b_2{}_{\overline{c}_{11}'}(b_3)a_4b_5b_6a_7b_7b_8b_9\\
                           &\simeq c_{11}a_1a_4\widetilde{b_1}\widetilde{b_2}\widetilde{b_3}b_5b_6a_7b_7b_8b_9
\end{align*}
where $\widetilde{b_1}={}_{\overline{a}_4\overline{c}_{11}}(b_1), \widetilde{b_2}={}_{\overline{a}_4}(b_2), \widetilde{b_3}={}_{\overline{a}_4\overline{c}_{11}'}(b_3)$. If we cap off one boundary component $\delta_1$, then we obtain the $8$-holed torus relation corresponding to the genus $1$ holomorphic Lefschetz pencil on $\mathbb{P}^2\#\overline{\mathbb{P}^2}$. 
Next, we start with the $8$-holed torus relation corresponding to the genus $1$ holomorphic Lefschetz pencil $f_s:\mathbb{P}^1\times\mathbb{P}^1\dashrightarrow\mathbb{P}^1$ of curves of bidegree $(2,2)$ (we use the same notation for the vanishing cycles below as in Figure $27$(a) in \cite{HH:21}) and apply the following sequence of Hurwitz moves to obtain the second building block. 
\begin{align*}
\prod_{j=1}^{8}T_{\delta_j}&=a_1b_1b_2a_3b_3b_4a_5b_5b_6a_7b_7b_8\\
                           &\simeq a_1b_1b_2a_3b_3b_4{}_{\overline{c}_5'}(c_3){}_{a_5}(b_6)a_5a_7b_7b_8\\
                           &\simeq a_1b_1b_2a_3b_3c_3b_4{}_{a_5}(b_6)c_1a_5b_7b_8\\
                           &\simeq a_1b_1b_2a_3b_3{}_{c_3}(b_4){}_{c_3a_5}(b_6)c_3c_1a_5b_7b_8\\
                           &\simeq a_5b_7b_8a_1b_1b_2a_3b_3{}_{c_3}(b_4){}_{c_3a_5}(b_6)c_3c_1\\
                           &\simeq a_3a_5b_7b_8a_1b_1b_2\widetilde{b_3}\widetilde{b_4}\widetilde{b_6}c_3c_1			
\end{align*}
where $\widetilde{b_3}={}_{a_3}(b_3), \widetilde{b_4}={}_{a_3c_3}(b_4), \widetilde{b_6}={}_{a_3c_3a_5}(b_6)$.
The second-to-last equivalence is obtained by a cyclic permutation, which can be derived as a sequence of elementrary transformations. 
Finally, in order to obtain the third building block we apply the following sequence of Hurwitz moves to the $9$-holed torus relation corresponding to the genus $1$ holomorphic Lefschetz pencil of degree $3$ curves in $\mathbb{P}^2$. 
\begin{align*}
\prod_{j=1}^{9}T_{\delta_j}&=a_1b_1b_2b_3a_4b_4b_5b_6a_7b_7b_8b_9\\
                           &\simeq b_2b_3a_4b_4b_5b_6a_7b_7b_8b_9a_1b_1\\
                           &\simeq b_2b_3a_4b_4b_5b_6a_7b_7b_8b_9{}_{a_1}(b_1)a_1\\
                           &\simeq b_2b_3a_4b_4b_5b_6B_1b_7b_8{}_{a_1}(b_1)B_2a_1\\
                           &\simeq a_1b_2b_3a_4b_4b_5b_6B_1b_7b_8{}_{a_1}(b_1)B_2\\
                           &\simeq a_1b_2b_3a_4b_4b_5b_6\widetilde{b_7}\widetilde{b_8}\widetilde{b_1}B_1B_2
\end{align*}
where we denote $B_1=a_7=c_1, B_2={}_{\overline{c}_6'}(b_9)=c_2$ and $\widetilde{b_7}={}_{B_1}(b_7), \widetilde{b_8}={}_{B_1}(b_8), \widetilde{b_1}={}_{B_1a_1}(b_1)$. The first and the fourth equivalence is obtained by a cyclic permutation. \\
\textbf{Step2}: We will construct the desired relation in $\Gamma_3^{16}$ by breeding three relations obtained in the first step. First, we breed the following two copies of the $8$-holed torus relations embedded in $\Sigma_2^{12}$ as in Figure \ref{twoembeddingsof8holedtorus}
\begin{figure}
\includegraphics[width=0.8\textwidth]{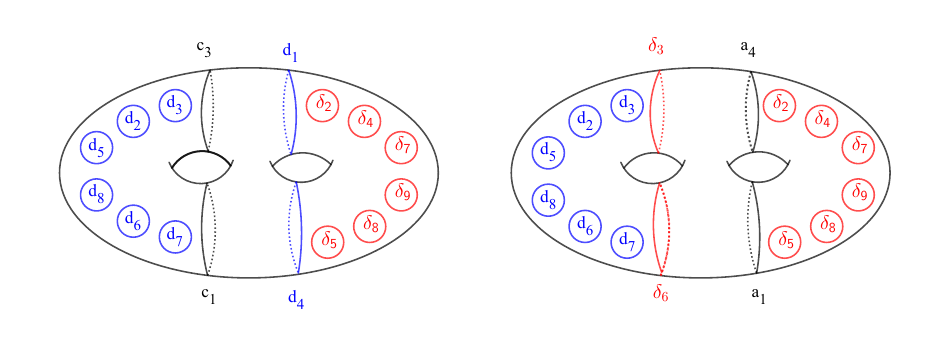}
\caption{Two embeddings of $\Sigma_1^8$ into $\Sigma_2^{12}$}
\label{twoembeddingsof8holedtorus}
\end{figure}
\begin{figure}
\includegraphics[width=0.4\textwidth]{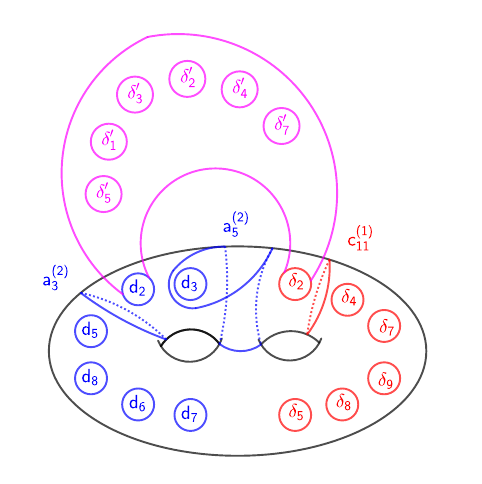}
\caption{A $9$-holed torus breeding applied to the genus $2$ Lefschetz pencil with $12$ base points}
\label{fig:9holedtorusbreeding}
\end{figure}
\begin{align*}
\prod_{j=1}^{8}T_{d_j}&=a_3^{(2)}a_5^{(2)}b_7^{(2)}b_8^{(2)}a_1^{(2)}b_1^{
(2)}b_2^{(2)}\widetilde{b_3}^{(2)}\widetilde{b_4}^{(2)}\widetilde{b_6}^{(2)}c_3^{(2)}c_1^{(2)}\\
\prod_{j=2}^{9}T_{\delta_j}&=a_1^{(1)}a_4^{(1)}c_{11}^{(1)}\widetilde{b_1}^{(1)}\widetilde{b_2}^{(1)}\widetilde{b_3}^{(1)}b_5^{(1)}b_6^{(1)}a_7^{(1)}b_7^{(1)}b_8^{(1)}b_9^{(1)}
\end{align*}
to obtain the following relation in $\Gamma_2^{12}$
\[\begin{split}
&(d_7d_6d_8d_5d_2d_3)(\delta_2\delta_4\delta_7\delta_9\delta_8\delta_5)\\
=&(a_3^{(2)}a_5^{(2)}b_7^{(2)}b_8^{(2)}a_1^{(2)}b_1^{
(2)}b_2^{(2)}\widetilde{b_3}^{(2)}\widetilde{b_4}^{(2)}\widetilde{b_6}^{(2)})(c_{11}^{(1)}\widetilde{b_1}^{(1)}\widetilde{b_2}^{(1)}\widetilde{b_3}^{(1)}b_5^{(1)}b_6^{(1)}a_7^{(1)}b_7^{(1)}b_8^{(1)}b_9^{(1)}),
\end{split}\]
by cancelling $c_3^{(2)}c_1^{(2)}$ with $\overline{\delta_6}\overline{\delta_3}$ and $\overline{d_1}\overline{d_4}$ with $a_1^{(1)}a_4^{(1)}$, respectively.
Using the commutativity relations, we can rewrite this relation in the following form.  
\begin{multline}
(d_3d_5d_6d_7d_8)(\delta_4\delta_5\delta_7\delta_8\delta_9)\\
=(\overline{d}_2\overline{\delta}_2a_3^{(2)}a_5^{(2)}c_{11}^{(1)})(b_7^{(2)}b_8^{(2)}a_1^{(2)}b_1^{
(2)}b_2^{(2)}\widetilde{b_3}^{(2)}\widetilde{b_4}^{(2)}\widetilde{b_6}^{(2)})\\
(\widetilde{b_1}^{(1)}\widetilde{b_2}^{(1)}\widetilde{b_3}^{(1)}b_5^{(1)}b_6^{(1)}a_7^{(1)}b_7^{(1)}b_8^{(1)}b_9^{(1)}).
\label{eq:genustwo}
\end{multline}
Now, we apply the $9$-holed torus breeding, as in Figure \ref{fig:9holedtorusbreeding}, of the following form
\[\delta_7'\delta_4'\delta_2'\delta_1'\delta_3'\delta_5'=(a_1^{(3)}b_2^{(3)}b_3^{(3)}a_4^{(3)}b_4^{(3)}b_5^{(3)}b_6^{(3)}\widetilde{b_7}^{(3)}\widetilde{b_8}^{(3)}\widetilde{b_1}^{(3)})(\overline{\delta}_6'\overline{\delta}_8'\overline{\delta}_9'B_2B_1),\]
to the above relation \eqref{eq:genustwo} in $\Gamma_2^{12}$ by identifying $\delta_6',\delta_8',\delta_9'$ with $a_3^{(2)},a_5^{(2)},c_{11}^{(1)}$ and $B_2,B_1$ with $d_2,\delta_2$, respectively.
Consequently, we get the desired relation in $\Gamma_3^{16}$:
\[
\begin{split}&(\delta_7'\delta_4'\delta_2'\delta_1'\delta_3'\delta_5')(d_3d_5d_6d_7d_8)(\delta_4\delta_5\delta_7\delta_8\delta_9)\\
=&(a_1^{(3)}b_2^{(3)}b_3^{(3)}a_4^{(3)}b_4^{(3)}b_5^{(3)}b_6^{(3)}\widetilde{b_7}^{(3)}\widetilde{b_8}^{(3)}\widetilde{b_1}^{(3)})(b_7^{(2)}b_8^{(2)}a_1^{(2)}b_1^{
(2)}b_2^{(2)}\widetilde{b_3}^{(2)}\widetilde{b_4}^{(2)}\widetilde{b_6}^{(2)})\\
&(\widetilde{b_1}^{(1)}\widetilde{b_2}^{(1)}\widetilde{b_3}^{(1)}b_5^{(1)}b_6^{(1)}a_7^{(1)}b_7^{(1)}b_8^{(1)}b_9^{(1)}).
\end{split}\]
 
\textbf{Step3}: To finish the proof of (b), we show that after taking $\lambda:\Sigma_3^{16}\rightarrow\Sigma_3$ obtained by capping the boundary components with $16$ disks, it is equivalent to the relation obtained in (a). 

\[(\widetilde{b_1}^{(1)}\widetilde{b_2}^{(1)}\widetilde{b_3}^{(1)}b_5^{(1)}b_6^{(1)}a_7^{(1)}b_7^{(1)}b_8^{(1)}b_9^{(1)})\]
\begin{align*}
\xrightarrow{\lambda} &~ {}_{\overline{m}_1\overline{m}_1\overline{z}_3\overline{l}_1}(z_1)\cdot{}_{l_1}(z_3)\cdot{}_{l_1z_1m_1l_1}(z_3)\cdot l_1\cdot l_1\cdot m_1\cdot l_1\cdot l_1\cdot l_1\\
\simeq &~ {}_{\overline{m}_1\overline{m}_1\overline{z}_3\overline{l}_1}(z_1)\cdot l_1\cdot z_3\cdot{}_{z_1m_1l_1}(z_3)\cdot l_1\cdot m_1\cdot l_1\cdot l_1\cdot l_1\\
\simeq &~ {}_{\overline{m}_1\overline{m}_1\overline{z}_3\overline{l}_1}(z_1)\cdot l_1\cdot z_3\cdot{}_{z_1m_1l_1}(z_3)\cdot l_1\cdot{}_{m_1}(l_1)\cdot{}_{m_1}(l_1)\cdot m_1\\
\simeq &~ {}_{\overline{m}_1\overline{m}_1\overline{z}_3\overline{l}_1}(z_1)\cdot l_1\cdot z_3\cdot{}_{z_1m_1l_1}(z_3)\cdot m_1\cdot m_1\cdot l_1\cdot{}_{m_1}(l_1)\cdot m_1\\
\simeq &~ {}_{\overline{m}_1\overline{m}_1\overline{z}_3\overline{l}_1}(z_1)\cdot l_1\cdot z_3\cdot{}_{z_1m_1l_1}(z_3)\cdot m_1\cdot{}_{m_1}(l_1)\cdot{}_{m_1m_1}(l_1)\cdot m_1\cdot m_1\\
\simeq &~ m_1\cdot m_1\cdot{}_{\overline{m}_1\overline{m}_1\overline{z}_3\overline{l}_1}(z_1)\cdot l_1\cdot z_3\cdot{}_{z_1m_1l_1}(z_3)\cdot m_1\cdot{}_{m_1}(l_1)\cdot{}_{m_1m_1}(l_1)\\
\simeq &~ {}_{\overline{z}_3\overline{l}_1}(z_1)\cdot m_1\cdot m_1\cdot l_1\cdot z_3\cdot{}_{\overline{z}_3 m_1\overline{l}_1}(z_1)\cdot m_1\cdot{}_{m_1}(l_1)\cdot{}_{m_1m_1}(l_1)\\
\simeq &~ {}_{\overline{z}_3\overline{l}_1}(z_1)\cdot m_1\cdot m_1\cdot l_1\cdot{}_{m_1\overline{l}_1}(z_1)\cdot m_1\cdot z_3\cdot{}_{m_1}(l_1)\cdot{}_{m_1m_1}(l_1)\\
\simeq &~ {}_{\overline{z}_3\overline{l}_1}(z_1)\cdot m_1\cdot m_1\cdot l_1\cdot m_1\cdot{}_{\overline{l}_1}(z_1)\cdot z_3\cdot m_1\cdot{}_{m_1}(l_1)\\
\simeq &~ {}_{\overline{z}_3\overline{l}_1}(z_1)\cdot m_1\cdot m_1\cdot{}_{l_1}(m_1)\cdot l_1\cdot{}_{\overline{l}_1}(z_1)\cdot z_3\cdot m_1\cdot{}_{m_1}(l_1)\\
\simeq &~ {}_{\overline{z}_3\overline{l}_1}(z_1)\cdot m_1\cdot l_1\cdot m_1\cdot z_1\cdot l_1\cdot z_3\cdot m_1\cdot{}_{m_1}(l_1)\\
\simeq &~ m_1\cdot{}_{\overline{m}_1\overline{z}_3\overline{l}_1}(z_1)\cdot m_1\cdot{}_{\overline{m}_1}(l_1)\cdot z_1\cdot l_1\cdot z_3\cdot m_1\cdot{}_{m_1}(l_1)\\
\simeq &~ m_1\cdot z_1\cdot{}_{\overline{z}_3\overline{m}_1}(l_1)\cdot m_1\cdot{}_{\overline{z}_1\overline{m}_1}(l_1)\cdot l_1\cdot m_1\cdot z_3\cdot{}_{m_1}(l_1)\\
\simeq &~ m_1\cdot z_1\cdot{}_{\overline{z}_3\overline{m}_1}(l_1)\cdot m_1\cdot{}_{\overline{m}_1l_1}(z_1)\cdot m_1\cdot{}_{\overline{m}_1}(l_1)\cdot z_3\cdot{}_{m_1}(l_1)\\
\simeq &~ m_1\cdot z_1\cdot{}_{\overline{z}_3\overline{m}_1}(l_1)\cdot{}_{l_1}(z_1)\cdot m_1\cdot m_1\cdot{}_{\overline{m}_1}(l_1)\cdot z_3\cdot{}_{m_1}(l_1)\\
\simeq &~ m_1\cdot z_1\cdot{}_{\overline{m}_1}(l_1)\cdot z_3\cdot{}_{l_1}(z_1)\cdot l_1\cdot l_1\cdot z_3\cdot{}_{m_1}(l_1)\\
\simeq &~ m_1\cdot z_1\cdot{}_{\overline{m}_1}(l_1)\cdot z_3\cdot l_1\cdot z_1\cdot l_1\cdot z_3\cdot{}_{m_1}(l_1)\\
\simeq &~ m_1\cdot z_1\cdot{}_{\overline{m}_1}(l_1)\cdot z_3\cdot l_1\cdot{}_{z_1}(l_1)\cdot z_1\cdot z_3\cdot{}_{m_1}(l_1)\\
\simeq &~ m_1\cdot z_1\cdot{}_{\overline{m}_1}(l_1)\cdot z_3\cdot z_1\cdot l_1\cdot z_1\cdot z_3\cdot{}_{m_1}(l_1)
\end{align*}

\[(b_7^{(2)}b_8^{(2)}a_1^{(2)}b_1^{
(2)}b_2^{(2)}\widetilde{b_3}^{(2)}\widetilde{b_4}^{(2)}\widetilde{b_6}^{(2)})\]
\begin{align*}
\xrightarrow{\lambda} &~ l_2\cdot l_2\cdot m_2\cdot l_2\cdot l_2\cdot{}_{\overline{l}_2}(z_2)\cdot{}_{m_2\overline{l}_2}(z_1)\cdot{}_{m_2m_2\overline{l}_2}(z_2)\\
\simeq &~ m_2\cdot{}_{\overline{m}_2}(l_2)\cdot{}_{\overline{m}_2}(l_2)\cdot l_2\cdot z_2\cdot l_2\cdot{}_{m_2\overline{l}_2}(z_1)\cdot{}_{m_2m_2\overline{l}_2}(z_2)\\
\simeq &~ {}_{\overline{m}_2}(l_2)\cdot{}_{\overline{m}_2}(l_2)\cdot l_2\cdot z_2\cdot l_2\cdot m_2\cdot{}_{\overline{l}_2}(z_1)\cdot{}_{m_2\overline{l}_2}(z_2)\\
\simeq &~ {}_{\overline{m}_2}(l_2)\cdot l_2\cdot m_2\cdot z_2\cdot l_2\cdot m_2\cdot{}_{\overline{l}_2}(z_1)\cdot{}_{m_2\overline{l}_2}(z_2)\\
\simeq &~ {}_{\overline{m}_2}(l_2)\cdot z_2\cdot{}_{\overline{z}_2}(l_2)\cdot m_2\cdot{}_{l_2}(m_2)\cdot z_1\cdot{}_{\overline{m}_2l_2}(z_2)\cdot l_2\\
\simeq &~ {}_{\overline{m}_2}(l_2)\cdot z_2\cdot{}_{l_2}(z_2)\cdot l_2\cdot m_2\cdot z_1\cdot{}_{\overline{m}_2l_2}(z_2)\cdot l_2\\
\simeq &~ {}_{\overline{m}_2}(l_2)\cdot z_2\cdot l_2\cdot z_2\cdot z_1\cdot{}_{l_2}(z_2)\cdot m_2\cdot l_2\\
\simeq &~ {}_{\overline{m}_2}(l_2)\cdot z_2\cdot z_1\cdot{}_{\overline{z}_1}(l_2)\cdot z_2\cdot{}_{\overline{z}_2}(l_2)\cdot m_2\cdot l_2\\
\simeq &~ {}_{\overline{m}_2}(l_2)\cdot z_2\cdot z_1\cdot{}_{l_2}(z_1)\cdot l_2\cdot z_2\cdot m_2\cdot l_2\\
\simeq &~ {}_{\overline{m}_2}(l_2)\cdot z_2\cdot z_1\cdot l_2\cdot z_1\cdot z_2\cdot m_2\cdot l_2
\end{align*}

\[(a_1^{(3)}b_2^{(3)}b_3^{(3)}a_4^{(3)}b_4^{(3)}b_5^{(3)}b_6^{(3)}\widetilde{b_7}^{(3)}\widetilde{b_8}^{(3)}\widetilde{b_1}^{(3)})\]
\begin{align*}
\xrightarrow{\lambda} &~ z_2\cdot l_3\cdot l_3\cdot m_3\cdot l_3\cdot l_3\cdot l_3\cdot{}_{\overline{l}_3}(z_3)\cdot{}_{\overline{z}_3m_3\overline{l}_3}(z_2)\cdot{}_{m_3m_3}(l_3)\\
\simeq &~ m_3\cdot z_2\cdot{}_{\overline{m}_3}(l_3)\cdot{}_{\overline{m}_3}(l_3)\cdot l_3\cdot l_3\cdot z_3\cdot l_3\cdot{}_{\overline{z}_3m_3\overline{l}_3}(z_2)\cdot{}_{m_3m_3}(l_3)\\
\simeq &~ z_2\cdot{}_{\overline{m}_3}(l_3)\cdot{}_{\overline{m}_3}(l_3)\cdot l_3\cdot l_3\cdot{}_{z_3}(l_3)\cdot{}_{m_3\overline{l}_3}(z_2)\cdot z_3\cdot{}_{m_3m_3}(l_3)\cdot m_3\\
\simeq &~ z_2\cdot{}_{\overline{m}_3}(l_3)\cdot{}_{\overline{m}_3}(l_3)\cdot l_3\cdot l_3\cdot{}_{z_3}(l_3)\cdot
m_3\cdot{}_{\overline{l}_3}(z_2)\cdot z_3\cdot{}_{m_3}(l_3)\\
\simeq &~ z_2\cdot{}_{\overline{m}_3}(l_3)\cdot{}_{\overline{m}_3}(l_3)\cdot l_3\cdot z_3\cdot{}_{l_3}(m_3)\cdot z_2\cdot l_3\cdot z_3\cdot{}_{m_3}(l_3)\\
\simeq &~ z_2\cdot{}_{\overline{m}_3}(l_3)\cdot{}_{\overline{m}_3}(l_3)\cdot l_3\cdot{}_{z_3l_3}(m_3)\cdot z_3\cdot{}_{z_2}(l_3)\cdot z_2\cdot z_3\cdot{}_{m_3}(l_3)\\
\simeq &~ z_2\cdot{}_{\overline{m}_3}(l_3)\cdot{}_{\overline{m}_3}(l_3)\cdot{}_{z_3l_3}(m_3)\cdot{}_{l_3}(z_3)\cdot z_2\cdot l_3\cdot z_2\cdot z_3\cdot{}_{m_3}(l_3)\\
\simeq &~ z_2\cdot{}_{\overline{m}_3}(l_3)\cdot z_3\cdot{}_{\overline{m}_3}(l_3)\cdot{}_{l_3}(z_3)\cdot z_2\cdot l_3\cdot z_2\cdot z_3\cdot{}_{m_3}(l_3)\\
\simeq &~ z_2\cdot{}_{\overline{m}_3}(l_3)\cdot{}_{z_3\overline{m}_3}(l_3)\cdot l_3\cdot z_2\cdot z_3\cdot l_3\cdot z_2\cdot z_3\cdot{}_{m_3}(l_3)\\
\simeq &~ z_2\cdot l_3\cdot m_3\cdot{}_{\overline{l}_3z_3\overline{m}_3}(l_3)\cdot z_2\cdot z_3\cdot l_3\cdot z_2\cdot z_3\cdot{}_{m_3}(l_3)\\
\simeq &~ z_2\cdot l_3\cdot{}_{\overline{l}_3}(z_3)\cdot m_3\cdot z_2\cdot z_3\cdot l_3\cdot z_2\cdot z_3\cdot{}_{m_3}(l_3)\\
\simeq &~ z_2\cdot z_3\cdot l_3\cdot m_3\cdot z_2\cdot z_3\cdot l_3\cdot z_2\cdot z_3\cdot{}_{m_3}(l_3)\\
\simeq &~ m_3\cdot z_2\cdot z_3\cdot{}_{\overline{m}_3}(l_3)\cdot z_2\cdot z_3\cdot l_3\cdot z_2\cdot z_3\cdot{}_{m_3}(l_3)
\end{align*}

We have already shown that $\tilde{X_4}$ is diffeomorphic to $\mathbb{P}^2\#16\overline{\mathbb{P}^2}$; hence $X_4$ is rational or ruled as well. Moreover, $e(X_4)=3$ and $\sigma(X_4)=1$ because the signature of an $8$-holed torus relation is equal to zero and the signature of a $9$-holed torus relation is equal to $1$.
\end{proof}
\label{lemma for $d=4$}
\end{lemma}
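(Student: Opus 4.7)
For part (a), the plan is to build the closed relation by breeding three copies of the $3$-holed torus relation, which is the genus-$1$ Lefschetz-pencil monodromy factorization on $\mathbb{P}^2$ coming from the pencil of cubics, cast in the form $(t_m t_{m'} t_z t_l)^3 = t_{\delta_1}t_{\delta_2}t_{\delta_3}$ in $\Gamma_1^3$. First I embed two such copies into $\Sigma_2^2$ along the two pairs of curves $\{m_1,m_1'\}, \{m_2,m_2'\}$ as in Figure~\ref{fg:twothreeholedtorus}; cancelling boundary twists pairwise gives a genus-$2$ Lefschetz pencil relation with two boundary components and $20$ vanishing cycles, exhibited as equation (analogous to \eqref{eq:fromtwothreeholedtorus}). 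Then I embed this relation together with a third copy of the $3$-holed torus relation into the closed $\Sigma_3$ as in Figure~\ref{fg:embeddingsintotheclosedthree}, and cancel the remaining boundary twists $t_{m_3}t_{m_3}$ against the $t_{m_2}t_{z_1}t_{m_1}$ produced by the third copy, after the usual Hurwitz moves and cyclic permutations. The result is precisely the displayed relation in $\Gamma_3$, which gives a genus-$3$ Lefschetz fibration on a closed $4$-manifold $\tilde X_4$ with $27$ critical points.

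To identify $\tilde X_4$, I compute the standard invariants. The Euler characteristic is $2(2-2g)+n = -8 + 27 = 19$. The signature is determined by Endo--Nagami: the $3$-holed torus relation has signature $I_3 = -5$, so the three copies contribute $\sigma(\tilde X_4) = -15$. For the diffeomorphism type, I note that the vanishing-cycle list includes all six meridians and longitudes $m_i, l_i$, which normally generate $\pi_1(\Sigma_3)$; therefore $\pi_1(\tilde X_4) = 1$. Next, since (as will be verified in (b)) the fibration admits $16$ disjoint $(-1)$-sections, Gompf's theorem furnishes a symplectic form $\omega$ making all sections symplectic, and after reorienting we may assume $[S_i]\cdot [\omega]>0$ for each $i$. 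Sato's bound on the number of symplectic $(-1)$-spheres in a minimal symplectic $4$-manifold then forces $\tilde X_4$ to be rational or ruled, and simple-connectedness excludes the irrational ruled case. The invariants $(e,\sigma,\pi_1)=(19,-15,1)$ now pin $\tilde X_4$ down to $\mathbb{P}^2\#16\overline{\mathbb{P}^2}$.

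For part (b), the plan is to replicate the breeding of (a) one level up, working in $\Gamma_g^b$ rather than $\Gamma_g$ so as to keep track of the $16$ base points. Instead of three $3$-holed torus relations I use the three boundary-interior building blocks coming from genus-$1$ pencils: two $8$-holed torus relations (one from the $(2,2)$-pencil on $\mathbb{P}^1\times\mathbb{P}^1$, one from the cubic pencil on $\mathbb{P}^2\#\overline{\mathbb{P}^2}$) and one $9$-holed torus relation (from the cubic pencil on $\mathbb{P}^2$). The first step is purely combinatorial: rearrange each factorization by Hurwitz moves and cyclic permutations so that the three cycles that will be glued together appear consecutively at one end of the word, leaving a ``tail'' of the remaining Dehn twists. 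The second step performs two consecutive breedings. Breeding the two $8$-holed torus relations inside $\Sigma_2^{12}$ along the appropriate three pairs of curves (Figure~\ref{twoembeddingsof8holedtorus}) produces a genus-$2$ pencil relation with $12$ base points and $20$ critical points. A $9$-holed torus breeding (Section~\ref{subsec:breeding}, Figure~\ref{fig:9holedtorusbreeding}) then attaches the third block and yields the desired relation in $\Gamma_3^{16}$ with $27$ critical points.

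The genuinely hard step is the third one: to prove that the pencil relation just constructed is the correct lift of the closed relation in (a). The route is to apply the capping homomorphism $\lambda: \Gamma_3^{16} \to \Gamma_3$, which sends each boundary twist to the identity, and then transform the image word by a long sequence of Hurwitz moves and $\Gamma_3$-conjugations into the explicit word in part (a). This is where the routine but lengthy calculation lives: each of the three big parenthesized blocks $(\widetilde b_1^{(1)}\cdots)$, $(b_7^{(2)}\cdots)$, $(a_1^{(3)}\cdots)$ must be individually normalized, using repeated braid and commutation relations among $m_i,l_i,z_j$, until it matches the corresponding factor in (a). The fact that all three blocks admit such a normalization simultaneously, and that the resulting boundary data give exactly $16$ sections of self-intersection $-1$, is what secures the diffeomorphism $X_4 \cong \mathbb{P}^2$ from the identification of $\tilde X_4$ already obtained in (a), together with $e(X_4)=3$ and $\sigma(X_4)=1$ (the $8$-holed torus relation contributing signature $0$ and the $9$-holed relation signature $1$).
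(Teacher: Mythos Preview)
Your proposal is correct and follows essentially the same approach as the paper: part (a) is obtained by breeding three copies of the $3$-holed torus relation and identifying the total space via Sato's theorem, simple-connectedness, and the Endo--Nagami signature computation; part (b) replaces these by two $8$-holed torus relations and one $9$-holed torus relation, breeds them in $\Gamma_2^{12}$ and then $\Gamma_3^{16}$, and verifies via the capping homomorphism that the result descends to the relation in (a). The only minor imprecision is your phrasing of Sato's result (it bounds the number of $(-1)$-sphere sections by $2g-2$ when the total space is neither rational nor ruled, rather than being a statement about minimal manifolds), but the logic you outline is exactly what the paper carries out.
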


\subsection{Topological construction of Lefschetz pencils on $\mathbb{P}^2$ for general $d$}

In this section, we generalize the topological construction of symplectic Lefschetz pencil analogous to the holomorphic Lefschetz pencil of degree $d=4$ curves whose total space is diffeomorphic to $\mathbb{P}^2$ in Lemma ~\ref{lemma for $d=4$} to arbitrary $d\geq 4$.

\begin{figure}[h]
\includegraphics[width=0.75\textwidth]{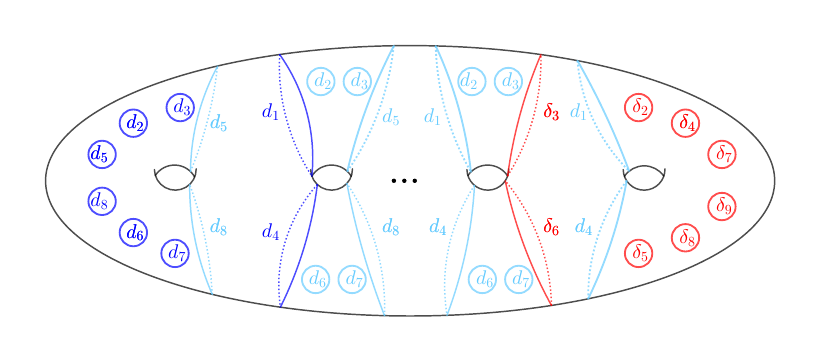}
\caption{Embeddings of $(d-2)$ copies of $8$-holed tori into $\Sigma_{d-2}^{4d-4}$}
\label{fig:d-2many8holedtori}
\end{figure}

\begin{proof}[Proof of Theorem \ref{thm:first}]
\textbf{Step 1.} 
First, one can construct a genus $(d-2)$ Lefschetz pencil with $4d-4$ base points and $8d-12$ many critical points
by combining $(d-2)$ copies of the $8$-holed torus relations of the following form. 

\begin{align}
\prod_{j=2}^{9}T_{\delta_j} &=(a_1^{(1)}a_4^{(1)})c_{11}^{(1)}(\tilde{b}_1^{(1)}\tilde{b}_2^{(1)}\tilde{b}_3^{(1)}b_5^{(1)}b_6^{(1)}a_7^{(1)}b_7^{(1)}b_8^{(1)}b_9^{(1)}).
\label{eq:red}
\end{align}
\begin{align}
\prod_{j=1}^{8}T_{d_j} =(c_{12}^{(2)}c_{11}^{(2)})(b_7^{(2)}b_8^{(2)}a_1^{(2)}b_1^{(2)}b_2^{(2)}\tilde{b}_3^{(2)}\tilde{b}_4^{(2)}\tilde{b}_6^{(2)})(c_3^{(2)}c_1^{(2)}). \label{eq:blue}\\
\simeq(c_{12}^{(2)}c_{10}^{(2)})c_{11}^{(2)}({}_{\overline{a}_1}(b_7^{(2)}b_8^{(2)})b_1^{(2)}b_2^{(2)}\tilde{b}_3^{(2)}\tilde{b}_4^{(2)}\tilde{b}_6^{(2)})(c_3^{(2)}c_1^{(2)}) \label{eq:lightblue}.
\end{align}
The first and the second relations were obtained in the proof of Lemma \ref{lemma for $d=4$}, and the last factorization is derived from the second one by Hurwitz moves.
We breed one copy of \eqref{eq:red}, the $8$-holed torus relation on $\mathbb{P}^2\#\overline{\mathbb{P}^2}$, 
and $(d-3)$ copies of the $8$-holed torus relation on $\mathbb{P}^1\times\mathbb{P}^1$, precisely one copy of \eqref{eq:blue} and $(d-4)$ copies of \eqref{eq:lightblue}, as shown in Figure~\ref{fig:d-2many8holedtori}, where the support of the first relation is indicated in red, that of the second relation in blue, and those of the third relations in light blue. As a result, we obtain the following relation in $\Gamma_{d-2}^{4d-4}$:
\begin{multline}
i_{d-2}(d_2d_3d_5d_8d_6d_7)i_{d-3}(d_2d_3d_6d_7)\cdots i_2(d_2d_3d_6d_7)i_1(\delta_2\delta_4\delta_7\delta_9\delta_8\delta_5)\\
=i_{d-2}(c_{12}^{(2)}c_{11}^{(2)})i_{d-3}(c_{11}^{(2)})\cdots i_2(c_{11}^{(2)})i_1(c_{11}^{(1)})i_{d-2}(\mathbb{B}_1)i_{d-3}(\mathbb{B}_2)\cdots i_2(\mathbb{B}_2)i_1(\mathbb{A}),
\label{eq:first}
\end{multline}
where \begin{align*}
\mathbb{A} &=\tilde{b}_1^{(1)}\tilde{b}_2^{(1)}\tilde{b}_3^{(1)}b_5^{(1)}b_6^{(1)}a_7^{(1)}b_7^{(1)}b_8^{(1)}b_9^{(1)},\\
\mathbb{B}_1 &=b_7^{(2)}b_8^{(2)}a_1^{(2)}b_1^{(2)}b_2^{(2)}\tilde{b}_3^{(2)}\tilde{b}_4^{(2)}\tilde{b}_6^{(2)},\\
\mathbb{B}_2 &={}_{\overline{a}_1^{(2)}}(b_7^{(2)}b_8^{(2)})b_1^{(2)}b_2^{(2)}\tilde{b}_3^{(2)}\tilde{b}_4^{(2)}\tilde{b}_6^{(2)}.
\end{align*}

In fact, one can show that the total space of this genus $(d-2)$ Lefschetz pencil $Y_d\dashrightarrow\mathbb{P}^1$ is diffeomorphic to $\mathbb{P}^2\#\overline{\mathbb{P}^2}$. By applying Sato's theorem to a relatively minimal genus $(d-2)$ Lefschetz fibration $Y_{d}\#(4d-4)\overline{\mathbb{P}^2}\rightarrow\mathbb{P}^1$ obtained by blowing up the base points, one can deduce that $Y_{d}\#(4d-4)\overline{\mathbb{P}^2}$ is either rational or ruled because $4d-4>2(d-2)-2$. Moreover, $H_1(Y_{d}\#(4d-4)\overline{\mathbb{P}^2}:\mathbb{Z})=0$ since the homology class of some vanishing cycles are given by $\lambda(i_{d-2}(b_2))=m_{d-2}, \lambda(i_{d-2}(a_1))=l_{d-2}, \lambda(i_{d-3}(b_2))=m_{d-2}+l_{d-3}, \lambda(i_{d-3}(\tilde{b}_3))=m_{d-3}+l_{d-3},\cdots, \lambda(i_2(b_2))=m_3+l_2, \lambda(i_2(\tilde{b}_3))=m_2+l_2, \lambda(i_1(a_7))=m_1,$ and $\lambda(i_1(b_7))=l_1$. On the other hand, $e(Y_d)=4$ because $e(Y_d\#(4d-4)\overline{\mathbb{P}^2})=4-4(d-2)+(8d-12)=4d$ and $\sigma(Y_d)=0$ because the signature of the $8$-holed torus relation is zero. Hence, $Y_d$ is diffeomorphic to either $\mathbb{P}\#\overline{\mathbb{P}^2}$ or $\mathbb{P}^1\times\mathbb{P}^1$. Now, it's enough to show that $Y_d$ contains a closed embedded surface of odd self-intersection. 
First, there exists a subsurface $S_0$, bounded by $d-1$ vanishing cycles $i_1(c_{11}), i_2(c_{11}),\allowbreak \cdots, i_{d-3}(c_{11}), i_{d-2}(c_{11}), i_{d-2}(c_{12})$ and containing $d-2$ base points, of the fiber of the Lefschetz pencil $Y_d\dashrightarrow\mathbb{P}^1$. By attaching core disks of the $2$-handles along $i_1(c_{11}), i_2(c_{11}),\cdots, i_{d-3}(c_{11}), i_{d-2}(c_{11}), i_{d-2}(c_{12})$ to $S_0$, we obtain a closed embedded surface $S$ in $Y_d$. If we denote its proper transform by $\tilde{S}$ in $Y_d\#(4d-4)\overline{\mathbb{P}^2}$, then the self-intersection $[S]^2-(d-2)=[\tilde{S}]^2=-(d-1)$.  Hence, $[S]^2=-1$. 

\begin{figure}[h]
\includegraphics[width=0.95\textwidth]{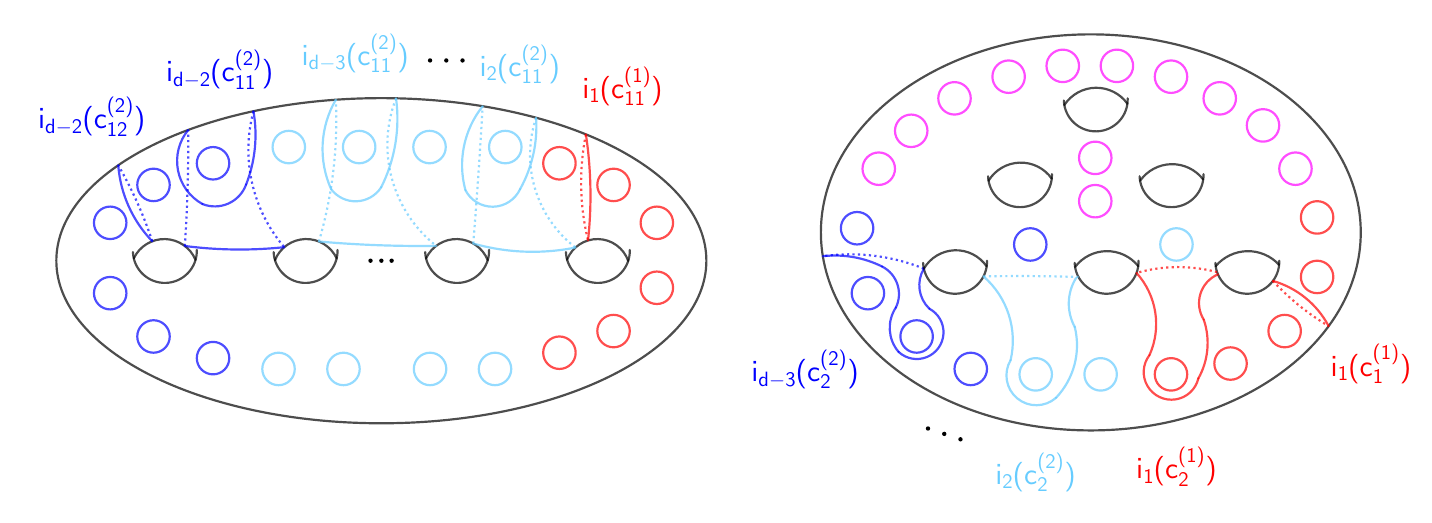}
\caption{Curves for breeding for arbitrary $d$}
\label{fig:breedingforgenerald}
\end{figure}

\textbf{Step 2.} Next, we show that $X_{d-1}$ breeding can be applied for any $d\geq 4$ to the genus $(d-2)$ Lefschetz pencil constructed in Step 1, generalizing a $9$-holed torus breeding used in the proof of Lemma~\ref{lemma for $d=4$}.
First, we observe that $(d-1)$ many vanishing cycles $\{i_1(c_{11}^{(1)}),i_2(c_{11}^{(2)}),\cdots i_{d-3}(c_{11}^{(2)}), i_{d-2}(c_{11}^{(2)}), i_{d-2}(c_{12}^{(2)})\}$ together with $(d-2)$ many boundary parallel curves bound a subsurface $\Sigma_0^{2d-3}$ of the fiber surface $\Sigma_{d-2}^{4d-4}$ of the Lefschetz pencil $Y_d\dashrightarrow\mathbb{P}^1$ as shown in Figure \ref{fig:breedingforgenerald}(a). Secondly, we claim that one can construct the monodromy relation of the Lefschetz pencil $X_{d-1}\dashrightarrow\mathbb{P}^1$ of the following form:
\begin{equation}
\prod_{j=1}^{(d-1)^2}T_{\delta_j'}=B_{3(d-2)^2}^{(d-1)}\cdots B_{d-1}^{(d-1)}(i_{d-3}(c_2^{(2)})\cdots i_2(c_2^{(2)})i_1(c_2^{(1)}c_1^{(1)})),
\label{d-1breeding}
\end{equation}
where $(d-2)$ vanishing cycles $\{i_1(c_1^{(1)}), i_1(c_2^{(1)}), i_2(c_2^{(2)}), \cdots, i_{d-3}(c_2^{(2)})\}$ together with $(d-1)$ many boundary components bound a subsurface $\Sigma_0^{2d-3}$ of the fiber surface $\Sigma_{g_{d-1}}^{(d-1)^2}$ as shown in Figure~\ref{fig:breedingforgenerald}(b).
For the proof of the claim, we first modify the $8$-holed torus relations as follows:
\begin{align*}
\prod_{j=2}^{9}T_{\delta_j} &=(a_1^{(1)}b_1^{(1)}b_2^{(1)}b_3^{(1)})(a_4^{(1)}b_4^{(1)}b_5^{(1)}b_6^{(1)})(a_7^{(1)}b_7^{(1)}b_8^{(1)}b_9^{(1)})\\
						&\simeq (a_1^{(1)}b_2^{(1)}b_3^{(1)})(a_4^{(1)}b_4^{(1)}b_5^{(1)}b_6^{(1)})(a_7^{(1)}b_7^{(1)}b_8^{(1)}b_9^{(1)})\cdot{}_{a_1^{(1)}}(b_1^{(1)})\\
						&\simeq (a_1^{(1)}b_2^{(1)}b_3^{(1)})(a_4^{(1)}b_4^{(1)}b_5^{(1)}b_6^{(1)})(a_7^{(1)}b_7^{(1)}b_8^{(1)})\cdot{}_{a_1}(b_1^{(1)})\cdot c_2^{(1)}\\
						&\simeq (a_1^{(1)}b_2^{(1)}b_3^{(1)})(a_4^{(1)}b_4^{(1)}b_5^{(1)}b_6^{(1)})(\tilde{b}_7^{(1)}\tilde{b}_8^{(1)}\tilde{b}_1^{(1)})c_1^{(1)}c_2^{(1)}\\
						&\simeq (a_1^{(1)}b_2^{(1)}b_3^{(1)})(c_{11}'\cdot a_4^{(1)}b_5^{(1)}b_6^{(1)})(\tilde{b}_7^{(1)}\tilde{b}_8^{(1)}\tilde{b}_1^{(1)})c_1^{(1)}c_2^{(1)}\\
						&\simeq (a_1^{(1)}b_2^{(1)}c_{11}'\cdot{}_{\overline{c}_{11}'}(b_3^{(1)})a_4^{(1)}b_5^{(1)}b_6^{(1)})(\tilde{b}_7^{(1)}\tilde{b}_8^{(1)}\tilde{b}_1^{(1)})c_1^{(1)}c_2^{(1)}\\
						&\simeq (a_1^{(1)}c_{11}^{(1)})b_2^{(1)}\cdot{}_{\overline{c}_{11}'}(b_3^{(1)})\cdot a_4^{(1)}(b_5^{(1)}b_6^{(1)}\tilde{b}_7^{(1)}\tilde{b}_8^{(1)}\tilde{b}_1^{(1)})c_1^{(1)}c_2^{(1)}\\
						&\simeq (a_1^{(1)}a_4^{(1)})c_{11}^{(1)}(\tilde{b}_2^{(1)}\tilde{b}_3^{(1)}b_5^{(1)}b_6^{(1)}\tilde{b}_7^{(1)}\tilde{b}_8^{(1)}\tilde{b}_1^{(1)})c_2^{(1)}c_1^{(1)}
	\end{align*}
where $\tilde{b}_7^{(1)}={}_{a_7}(b_7^{(1)}), \tilde{b}_8^{(1)}={}_{a_7}(b_8^{(1)}), \tilde{b}_1^{(1)}={}_{a_7a_1}(b_1^{(1)}), \tilde{b}_2^{(1)}={}_{\overline{a}_4}(b_2^{(1)}), \tilde{b}_3^{(1)}={}_{\overline{a}_4\overline{c}_{11}'}(b_3^{(1)})$.

\begin{align*}
\prod_{j=1}^{8}T_{d_j} &=(c_{12}^{(2)}c_{11}^{(2)}))(b_7^{(2)}b_8^{(2)}a_1^{(2)}b_1^{(2)}b_2^{(2)}\tilde{b}_3^{(2)}\tilde{b}_4^{(2)}\tilde{b}_6^{(2)})(c_3^{(2)}c_1^{(2)})\\
					&\simeq (c_{12}^{(2)}c_{11}^{(2)}))(b_7^{(2)}b_8^{(2)}{}_{\overline{c}_8'}(c_2^{(2)})a_1^{(2)}b_2^{(2)}\tilde{b}_3^{(2)}\tilde{b}_4^{(2)}\tilde{b}_6^{(2)})(c_3^{(2)}c_1^{(2)})\\
					&\simeq (c_{12}^{(2)}c_{11}^{(2)}))(b_7^{(2)}c_2^{(2)}b_8^{(2)}a_1^{(2)}b_2^{(2)}\tilde{b}_3^{(2)}\tilde{b}_4^{(2)}\tilde{b}_6^{(2)})(c_3^{(2)}c_1^{(2)})\\
					&\simeq(c_{12}^{(2)}c_{11}^{(2)})({}_{\overline{c}_2}(b_7^{(2)})b_8^{(2)}a_1^{(2)}b_2^{(2)}\tilde{b}_3^{(2)}\tilde{b}_4^{(2)}\tilde{b}_6^{(2)})(c_2^{(2)}c_3^{(2)}c_1^{(2)})\\
					&\simeq (c_{12}^{(2)}a_1^{(2)}c_{11}^{(2)})({}_{\overline{a}_1\overline{c}_2}(b_7^{(2)})\cdot{}_{\overline{a}_1}(b_8^{(2)})b_2^{(2)}\tilde{b}_3^{(2)}\tilde{b}_4^{(2)}\tilde{b}_6^{(2)})(c_2^{(2)}c_3^{(2)}c_1^{(2)}).			
\end{align*}
By applying the breeding method as in Step 1, but with the above modified relations, we obtain the following relation in $\Gamma_{d-3}^{4d-8}$:
\begin{align*}
&i_{d-3}(d_2d_3d_5d_6d_7d_8)i_{d-4}(d_2d_3d_6d_7)\cdots i_2(d_2d_3d_6d_7)i_1(\delta_2\delta_4\delta_5\delta_7\delta_8\delta_9)\\
&=i_{d-3}(c_{12}^{(2)}c_{11}^{(2)}\mathbb{B}_1'c_2^{(2)})i_{d-4}(c_{11}^{(2)}\mathbb{B}_2'c_2^{(2)})\cdots i_2(c_{11}^{(2)}\mathbb{B}_2'c_2^{(2)})i_1(c_{11}^{(1)}\mathbb{A}'c_2^{(1)}c_1^{(1)}),
\end{align*}
where \begin{align*} 
 \mathbb{A}'&=\tilde{b}_3^{(1)}\tilde{b}_2^{(1)}b_5^{(1)}b_6^{(1)}\tilde{b}_7^{(1)}\tilde{b}_8^{(1)}\tilde{b}_1^{(1)},\\
 \mathbb{B}_1'&={}_{\overline{c}_2}(b_7^{(2)})b_8^{(2)}a_1^{(2)}b_2^{(2)}\tilde{b}_3^{(2)}\tilde{b}_4^{(2)}\tilde{b}_6^{(2)},\\
 \mathbb{B}_2'&={}_{\overline{a}_1\overline{c}_2}(b_7^{(2)}){}_{\overline{a}_1}(b_8^{(2)})b_2^{(2)}\tilde{b}_3^{(2)}\tilde{b}_4^{(2)}\tilde{b}_6^{(2)}.
\end{align*}
By the commutativity relations, it can be rewritten as follows.
\begin{multline}
i_{d-3}(d_2d_3d_5d_6d_7d_8)i_{d-4}(d_2d_3d_6d_7)\cdots i_2(d_2d_3d_6d_7)i_1(\delta_2\delta_4\delta_5\delta_7\delta_8\delta_9)\\
=(i_{d-3}(c_{12}^{(2)}c_{11}^{(2)})i_{d-4}(c_{11}^{(2)})\cdots i_2(c_{11}^{(2)})i_1(c_{11}^{(1)}))i_{d-3}(\mathbb{B}_1')i_{d-4}(\mathbb{B}_2')\cdots i_2(\mathbb{B}_2')i_1(\mathbb{A}')\\
(i_{d-3}(c_2^{(2)})\cdots i_2(c_2^{(2)})i_1(c_2^{(1)}c_1^{(1)})).
\label{eq:genus$d-3$}
\end{multline}
By the induction hypothesis, we assume that $X_{d-2}$ breeding is possible by using 
\begin{equation}
\prod_{j=1}^{(d-2)^2}T_{\delta_j'}=B_{3(d-3)^2}^{(d-2)}\cdots B_{d-2}^{(d-2)}(i_{d-4}(c_2^{(2)})\cdots i_2(c_2^{(2)})i_1(c_2^{(1)}c_1^{(1)})),
\label{eq:$X_{d-2}$breed}
\end{equation}
where $(d-3)$ vanishing cycles $\{i_1(c_1^{(1)}), i_1(c_2^{(1)}), i_2(c_2^{(2)}), \cdots, i_{d-4}(c_2^{(2)})\}$ together with $(d-2)$ boundary components, say $\delta_{(d-2)(d-3)+1}',\delta_{(d-2)(d-3)+2}',\cdots,\delta_{(d-2)^2}'$, bound a subsurface $\Sigma_0^{2d-5}$ of the fiber surface $\Sigma_{g_{d-2}}^{(d-2)^2}$.

Hence, breeding the relation \eqref{eq:$X_{d-2}$breed} to the genus $(d-3)$ Lefschetz pencil \eqref{eq:genus$d-3$}, by identifying $(d-3)$ boundary curves $i_1(\delta_2), i_2(d_2), \cdots, i_{d-4}(d_2),i_{d-3}(d_2)$ of \eqref{eq:genus$d-3$} with $(d-3)$ vanishing cycles $i_1(c_1^{(1)}), i_1(c_2^{(1)}), \allowbreak i_2(c_2^{(2)}), \cdots, i_{d-4}(c_2^{(2)})$ of \eqref{eq:$X_{d-2}$breed} and identifying $(d-2)$ vanishing cycles $i_{d-3}(c_{12}^{(2)}), i_{d-3}(c_{11}^{(2)}), i_{d-4}(c_{11}^{(2)}), \cdots,\allowbreak i_2(c_{11}^{(2)}), i_1(c_{11}^{(1)})$ of \eqref{eq:genus$d-3$} with $(d-2)$ boundary curves $\delta_{(d-2)(d-3)+1}',\allowbreak\delta_{(d-2)(d-3)+2}',\cdots,\delta_{(d-2)^2}'$ of \eqref{eq:$X_{d-2}$breed}, induces the desired relation for the $X_{d-1}$ breeding. It follows that
\begin{multline}
j(\delta_1'\cdots \delta_{(d-2)(d-3)}')i_{d-3}(d_3d_5d_6d_7d_8)i_{d-4}(d_3d_6d_7)\cdots i_2(d_3d_6d_7)i_1(\delta_4\delta_5\delta_7\delta_8\delta_9)\\
=B_{3(d-2)^2}^{(d-1)}\cdots B_{d-1}^{(d-1)}(i_{d-3}(c_2^{(2)})\cdots i_2(c_2^{(2)})i_1(c_2^{(1)}c_1^{(1)})),
\label{eq:fourth}
\end{multline}
where $B_{3(d-2)^2}^{(d-1)}\cdots B_{d-1}^{(d-1)}=j(B_{3(d-3)^2}^{(d-2)}\cdots B_{d-2}^{(d-2)})i_{d-3}(\mathbb{B}_1')i_{d-4}(\mathbb{B}_2')\cdots i_2(\mathbb{B}_2')i_1(\mathbb{A}')$.\\
Finally, by renaming the boundary curves on the left-hand side as $\delta_1',\cdots, \delta_{(d-1)^2}'$  we obtain the relation \eqref{d-1breeding}.

\textbf{Step 3.} Now,  by breeding \eqref{d-1breeding} to \eqref{eq:first} as in Figure \ref{fig:breedingforgenerald} we obtain the following monodromy relation of the Lefschetz pencil $X_d\dashrightarrow\mathbb{P}^1$ for arbitrary $d\geq 4$:
\begin{multline}
j(\delta_1'\cdots\delta_{(d-1)(d-2)}')i_{d-2}(d_3d_5d_8d_6d_7)i_{d-3}(d_3d_6d_7)\cdots i_2(d_3d_6d_7)i_1(\delta_4\delta_7\delta_9\delta_8\delta_5)\\
=j(B_{3(d-2)^2}^{(d-1)}\cdots B_{d-1}^{(d-1)})(i_{d-2}^{(1)}(\mathbb{B}_1)i_{d-3}^{(1)}(\mathbb{B}_2)\cdots i_2^{(1)}(\mathbb{B}_2)i_1^{(1)}(\mathbb{A})).
\end{multline}
Moreover, one can easily check that the genus, the number of base points, and the number of critical points of the Lefschetz pencil constructed topologically by the breeding operation coincide with those for the algebraic pencil of curves of degree $d$ in $\mathbb{P}^2$ as follows:
\begin{align*}
g_d=(d-2)+\frac{(d-2)(d-3)}{2}=\frac{(d-1)(d-2)}{2},\\
b_d=(8+4(d-3))+(d-1)^2-((d-2)+(d-1))=d^2,\\
N_d=\{(8d-12)-(d-1)\}+\{3(d-2)^2-(d-2)\}=3(d-1)^2.
\end{align*}

\textbf{Step 4.} Finally, we need to show that the total space $X_d$ is diffeomorphic to $\mathbb{P}^2$.
First, as in the case of $d=4$, we can apply the result of Sato to conclude that the total space $X_d\#d^2\overline{\mathbb{P}^2}$ of the Lefschetz fibration $\tilde{f_d}:X_d\#d^2\overline{\mathbb{P}^2}\rightarrow\mathbb{P}^1$ is either rational or ruled. This is because the Lefschetz fibration $\tilde{f}_d$ obtained by blowing up the base points from the Lefschetz pencil $f_d:X_d\dashrightarrow \mathbb{P}^1$, admits $d^2$ many $(-1)$ sphere sections and $d^2$ is greater than $2g_d-2$, which is equal to $d^2-3d$. Hence, $X_d$ is also rational or ruled. Moreover, $\sigma(X_d)=1$ since the monodromy factorization of the Lefschetz pencil $f_d:X_d\dashrightarrow \mathbb{P}^1$ was constructed by using the $8$-holed torus relations, which has signature zero, and one copy of the $9$-holed torus relation, which has signature $1$. Therefore, $X_d$ is diffeomorphic to $\mathbb{P}^2$.
\end{proof}

\begin{proof}[Proof of Corollary \ref{cor}]
By a further inductive argument, the following subword of $f_{d-1}:X_{d-1}\dashrightarrow\mathbb{P}^1$ in the monodromy factorization that we constructed in Theorem \ref{thm:first} can be obtained as a result of breeding $8$-holed torus relations and a $9$-holed torus relation as follows: 
\begin{align*}
j(B_{3(d-2)^2}\cdots B_{d-1}) &=i_1^{(d-2)}(a_1^{(1)}b_2^{(1)}b_3^{(1)}a_4^{(1)}b_4^{(1)}b_5^{(1)}b_6^{(1)}\tilde{b}_7^{(1)}\tilde{b}_8^{(1)}\tilde{b}_1^{(1)})\\
 &i_2^{(d-3)}(\mathbb{B}_1')i_1^{(d-3)}(\mathbb{A}')i_3^{(d-4)}(\mathbb{B}_1')i_2^{(d-4)}(\mathbb{B}_2')i_1^{(d-4)}(\mathbb{A}')\\
 &\cdots i_{d-3}^{(2)}(\mathbb{B}_1')i_{d-4}^{(2)}(\mathbb{B}_2')\cdots i_2^{(2)}(\mathbb{B}_2')i_1^{(2)}(\mathbb{A}').
\end{align*}
\end{proof}

\begin{figure}
\centering
\begin{tikzpicture}[scale=0.45]
\draw[blue] (0,2) -- (2,2) -- (2,0) -- (0,0) --cycle;
\draw[blue] (1,0) -- (1,2);
\draw[blue] (0,1) -- (2,1);
\draw[blue] (0,0) -- (2,2);
\draw[blue] (0,1) -- (1,2);
\draw[blue] (1,0) -- (2,1);

\node at (2.5,1) {+};

\draw[red] (3,0) -- (4,0) -- (5,1) -- (6,2) -- (3,2) --cycle;
\draw[red] (3,1) --(5,1);
\draw[red] (4,0) --(4,2);
\draw[red] (5,1) --(5,2);
\draw[red] (3,0) -- (5,2);
\draw[red] (3,1) --(4,2);

\node at (2.5,-0.5) {+};

\draw[magenta] (1,-1) -- (4,-1) -- (1,-4) --cycle;
\draw[magenta] (1,-2) --(3,-2);
\draw[magenta] (1,-3) --(2,-3);
\draw[magenta] (2,-3) --(2,-1);
\draw[magenta] (3,-2) --(3,-1);
\draw[magenta] (3,-1) --(1,-3);
\draw[magenta] (2,-1) --(1,-2);

\node at (6,-0.5) {$\Rightarrow$};

\draw (7,1) -- (11,1) -- (7,-3) --cycle;
\draw (7,0) --(10,0);
\draw (7,-1) --(9,-1);
\draw (7,-2) --(8,-2);
\draw (8,-2) --(8,1);
\draw (9,-1) --(9,1);
\draw (10,0) --(10,1);
\draw (10,1) --(7,-2);
\draw (7,-1) --(9,1);
\draw (7,0) --(8,1);
\end{tikzpicture}
\caption{Building blocks for $d=4$}
\label{fig:intuition}
\end{figure}
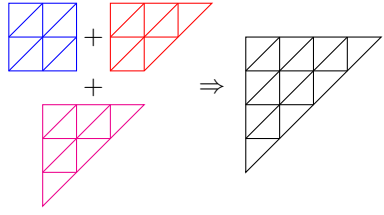

\begin{remark}
In fact, the topological construction described in this section is somehow related to the holomorphic Lefschetz pencil of degree $d$ curves in $\mathbb{P}^2$ through the degeneration diagram which will be explained in the next section. For example,   three pieces on the left-hand-side in Figure \ref{fig:intuition} are three building blocks for the genus $3$ holomorphic Lefschetz pencil of degree $4$ curves in $\mathbb{P}^2$ on the right-hand-side. Precisely, the red (or blue) diagram represents the $8$-holed torus relation of non-spin (or spin) type, the pink one represents the $9$-holed torus relation, and the process of assembling those buildling blocks corresponds to the breeding operations explained in the proof.   
\end{remark}

\section{Topological construction of the holomorphic Lefschetz pencil of degree $4$ curves in $\mathbb{P}^2$}
In this section, we will prove Theorem~\ref{thm:second}.
\subsection{Monodromy factorization of the holomorphic Lefschetz pencil of degree $4$ curves in $\mathbb{P}^2$}
We will find the explicit vanishing cycles of the holomorphic Lefschetz pencil $h_4:\mathbb{P}^2\dashrightarrow\mathbb{P}^1$ in Theorem \ref{thm:second}, in a manner similar to how Hamada and Hayano computed those for the Lefschetz pencil of degree $3$ curves on $\mathbb{P}^2$ in \cite{HH:21}. First, we need to compute the braid monodromy of the branch curve $S$ of the generic projection $\pi|_{V_4}:V_4\rightarrow\mathbb{P}^2$, restricted to the Veronese surface of order $4$.
We begin with the following Theorem on the projective degeneration of the Veronese surface $V_d$ of arbitrary order $d$. 
\begin{theorem}\cite{MT:BGT3}
Let $V_d\subset \mathbb{P}^N$ be the Veronese surface of order $d$. Then there exists a sequence of projective degenerations from $V_d$ to a union of $d^2$ planes.   
\end{theorem}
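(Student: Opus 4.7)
The plan is to realize the claim as a toric degeneration of $V_d$, exploiting the fact that the $d$-fold Veronese surface is the projective toric surface associated with the lattice triangle $\Delta_d = \mathrm{conv}\{(0,0),(d,0),(0,d)\}$. The $\binom{d+2}{2}$ lattice points of $\Delta_d$ index the homogeneous coordinates on $\mathbb{P}^N$, and the standard unimodular triangulation of $\Delta_d$ into $d^2$ unit triangles produces a combinatorial description of the desired central fiber: each unit triangle $T$ with vertices $v_0, v_1, v_2 \in \mathbb{Z}^2$ spans a coordinate plane $P_T \subset \mathbb{P}^N$, and these $d^2$ planes meet in pairs along the coordinate lines indexed by the interior edges of the triangulation.

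To promote this combinatorial picture to a flat family, I would choose a strictly convex piecewise-linear function $h : \Delta_d \to \mathbb{R}$ whose domains of linearity are exactly the unit triangles, and define
\[
\phi_t([x:y:z]) = \left( t^{h(i,j,k)} \, x^i y^j z^k \right)_{i+j+k = d}, \quad t \in \mathbb{C}^*.
\]
For $t \neq 0$ the image of $\phi_t$ equals $V_d$, since coordinate rescaling is an automorphism of $\mathbb{P}^N$. Let $W$ be the Zariski closure of $\{(t, \phi_t(p)) : t \in \mathbb{C}^*,\, p \in \mathbb{P}^2\}$ in $\mathbb{C} \times \mathbb{P}^N$; flatness of $\pi : W \to \mathbb{C}$ follows from irreducibility of $W$ together with the dominance of $\pi$ onto the smooth curve $\mathbb{C}$. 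The central fiber $\pi^{-1}(0)$ is cut out by the initial ideal of the ideal of $V_d$ with respect to the weight vector $h$, which by the standard theorem on regular triangulations of toric ideals coincides with the Stanley--Reisner ideal of the triangulation; its zero locus is precisely $\bigcup_T P_T$.

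The main work is to verify that the central fiber is the reduced union of exactly these $d^2$ planes, rather than a non-reduced scheme or one with missing or extra components. This amounts to checking that the binomial relations defining $V_d$ degenerate, under the weight $h$, to the Stanley--Reisner relations of the triangulation, equivalently that the $h$-weighted tropicalization of $V_d$ is dual to the subdivision. Since each unit triangle is unimodular, this reduces to a local check: in the toric chart near each vertex of the triangulation, only the three monomials at the vertices of a given incident unit triangle survive in the $t \to 0$ limit, recovering $P_T$ with multiplicity one. To obtain the \emph{sequence} of intermediate projective degenerations asserted in the theorem, rather than a single one-parameter family, I would filter $h$ through progressively finer regular subdivisions of $\Delta_d$, at each stage degenerating $V_d$ to a union of lower-degree Veronese pieces glued along coordinate curves, until only unit triangles remain.
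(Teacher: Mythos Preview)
The paper does not prove this theorem; it is quoted from Moishezon--Teicher \cite{MT:BGT3} and used as a black box, so there is no ``paper's own proof'' to compare against directly. What the paper actually needs from the result is the explicit combinatorial description of the central fiber $(V_4)_0$ as sixteen planes arranged according to the diagram in Figure~\ref{fig:degdiagram}, together with the intermediate degenerations that control the order in which the lines $L_j$ regenerate (this ordering feeds into Lemmas~\ref{lem:2pt} and~\ref{lem:6pt}).

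Your toric-degeneration argument is a correct and efficient route to the bare statement: $V_d$ is the projective toric surface of $\Delta_d$, a strictly convex PL function inducing the unit triangulation gives a Gr\"obner/initial-ideal degeneration whose special fiber is the reduced Stanley--Reisner scheme, and unimodularity guarantees multiplicity one on each component. The flatness argument and the appeal to Sturmfels' theorem on regular triangulations are standard and fine. By contrast, the original Moishezon--Teicher construction in \cite{MT:BGT3} is hands-on and recursive: one degenerates $V_d$ first into a few explicit pieces (lower Veronese embeddings and scrolls) and then repeats on each piece, producing a concrete sequence of projective surfaces rather than a single one-parameter family. Your final paragraph gestures at recovering this sequence by refining the subdivision in stages, which is the right idea, but note that for the downstream application in this paper the \emph{specific} sequence matters: the order of regeneration of the lines around each $6$-point (``$L_1$ and $L_6$ first, then $L_2$ and $L_5$, then $L_3$ and $L_4$'') is read off from the Moishezon--Teicher sequence, and an arbitrary filtration of your PL height function need not reproduce it. So your argument establishes the theorem as stated, but if you wanted to plug it into the rest of the paper you would have to match your coarsening sequence to theirs.
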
 

\begin{figure}
\includegraphics[width=0.9\textwidth]{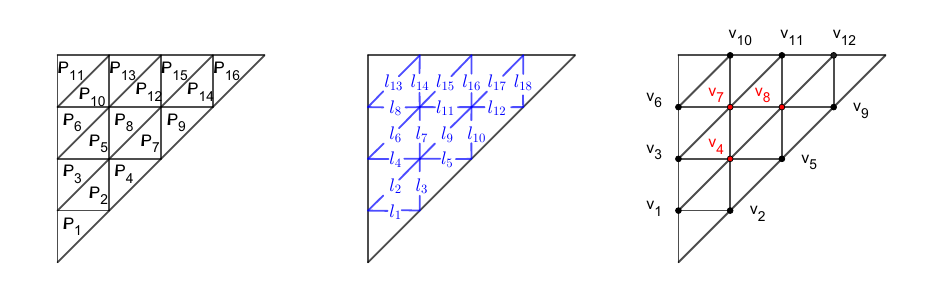}
\caption{Planes, lines, and multiple points in $(V_4)_0$}
\label{fig:degdiagram}
\end{figure}

In particular, the result of the sequence of projective degenerations of $V_4$ is a union of $16$ projective planes, denoted by $(V_4)_0$, and it can be described pictorially as in Figure \ref{fig:degdiagram}, where each triangle represents the surface equivalent to the image of the Veronese embedding $v_1$ of the plane and each edge separating two triangles represents the intersection line between the corresponding planes. 
We denote the planes, the intersection lines, and the vertices (corresponding to the multiple points of the line arrangement) in $(V_4)_0$ by $\{P_i\}_{i=1}^{16}$, $\{l_j\}_{j=1}^{18}$, and $\{v_k\}_{k=1}^{12}$ as shown in Figure \ref{fig:degdiagram}.
The vertices are classified into two types. Each of three points $v_4, v_7,$ or $v_8$, where six lines intersect, is referred to as a $6$-point, while the remaining nine points are called $2$-points.\\

\begin{figure}
\includegraphics[width=0.7\textwidth]{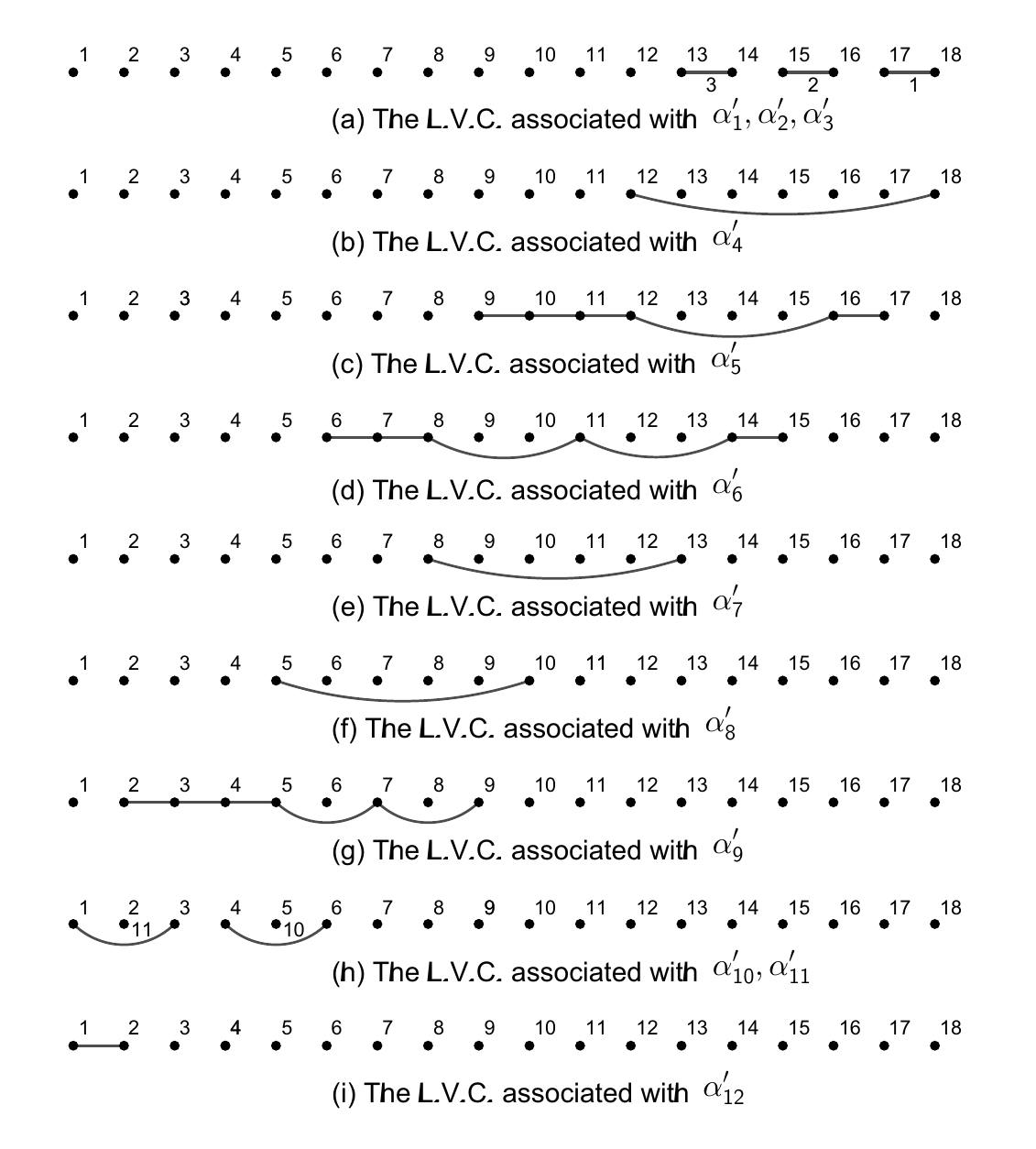}
\caption{Braid monodromies around the multiple points of $S_0$}
\label{fig:braidmonodromyofdegbrcurve}
\end{figure}

First, the branch curve $S_0$ of the generic projection $\pi|_{(V_4)_0}:(V_4)_0\rightarrow\mathbb{P}^2$, restricted to the degenerated surface $(V_4)_0$, is given by the line arrangement $\cup_{j=1}^{18} \pi(l_j)$ in $\mathbb{P}^2$ that can be considered as a subarrangement of the lines in $\mathbb{P}^2$ dual to $12$ generic points. Moreover, the chosen ordering of lines is a reverse lexicographic ordering with respect to the ordering of vertices, given by the $x$-coordinate, so that it concides with the ordering by the $y$-coordinate of the intersection of the reference fiber over a base point $a_0'\gg x(v_{12})$ with the corresponding line. Hence, by Theorem $IX.2.1$ 
in \cite{MT:BGT1} there exists an ordered set $(\alpha_1',\cdots,\alpha_{12}')$ of simple paths $\alpha_m'$ from a fixed base point $a_0'$ in $\mathbb{P}^1\setminus\{a_1',\cdots,a_{12}'\}$  to each $a_n'$, where $a_n'=\pi'\circ\pi(a_n)$ and $a_n=v_{13-n}$, ordered counterclockwise around $a_0'$, so that the braid monodromy $\rho$ of $S_0$ along each loop $l(\alpha_n')$ based at $a_0'$ going around one $a_n'$ is given by the full twist $\Delta_n^2$ (that is, the square of the generalized half-twist) about the consecutive sequence $\gamma_n$ of simple paths (called the Lefschetz vanishing cycle) as shown in Figure~ \ref{fig:braidmonodromyofdegbrcurve}, where the points labeled with $j$ are the intersection between the corresponding line $\pi(l_j)$ and the fiber $\pi'^{-1}(a_0')$. Moreover, its braid monodromy factorization is given by $\prod_{n=1}^{12} C_n\cdot\Delta_n^2=\Delta_{18}^2$ in the braid group $B_{18}$ where $C_n$ is the product of the braid monodromies for the additional nodes not apprearing in $X_0$. 

Since the degree of the curve is doubled after the regeneration, the branch curve $S$ of the projection $V_4\rightarrow\mathbb{P}^2$ is a curve of degree $36$ in $\mathbb{P}^2$ consisting of the local configurations around each of $12$ vertices and the additional nodes both obtained after the regeneration. Moreover, the braid monodromy factorization of $S$ is given by 
$\prod_{n=1}^{12} C_n'\cdot(H_n)_{e_n}=\Delta_{36}^2$ in $B_{36}$, where $H_n$ is the local braid monodromy of $S$ after regeneration around each vertex $a_n$, $(H_n)_{e_n}$ is the conjugation of $H_n$ by some $e_n\in \langle z_{jj'}~|~ a_n\in L_j\rangle$, and $C_n'$ is obtained from $C_n$ by the second regeneration rule. This can be verified as in the Lemma $12$ in \cite{MT:BGT4}. Here, the only thing we need to check is that there are no extra braid factors by comparing the degrees of both sides. Precisely, deg$(\Delta_{36}^2)=36\cdot35=1260$ and deg$(\prod{C_n'(H_n)_{e_n}})=2\cdot(99\cdot4)+3\cdot 126+9\cdot 10=1260$. Additionally, we can take $e_n=1$ for all $n$ in the case of all Veronese surfaces $V_d$, as proved in the Theorem $8.2$ in \cite{Moi:85}. We still have some ambiguity $m\in\mathbb{Z}$ involved in a factor $\alpha^{(1)}(m)$, obtained after the first regeneration rule, of the local monodromy $H_k$ around each $6$-point $v_k$ as explained in \cite{MT:BGT4}. However, we can show, in the following lemma, that the global braid monodromy factorization is actually equivalent to the factorization in which all integers $m_1, m_2,$ and $m_3$, corresponding to the indices $k=4,7,$ and $8$ of three $6$-points, are equal to zero. 

\begin{lemma}
The global braid factorization of the branch curve $S$ of the projection $\pi|_{V_4}:V_4\rightarrow\mathbb{P}^2$ is given by
\[\Delta_{36}^2=\prod_{k=12}^1 C_k' H_k'~~~~\text{in}~~~ B_{36}\]
where $C_k'$ is the braid monodromy for the additional nodes not appearing in $X_0$ after regeneration, 
$H_k$ is the local braid monodromy around each vertex $v_k$ after regeneration, and
\[H_k'=
\begin{cases}
H_k & \text{for} ~k\neq 4, \neq 7, \neq 8\\
H_4(m_1=0) & \text{for} ~k=4\\
H_7(m_2=0) & \text{for} ~k=7\\
H_8(m_3=0) & \text{for} ~k=8 .
\end{cases}
\]
\begin{proof}
As mentioned above, the braid monodromy factorization is given by $\Delta_{36}^2=\prod_{k=12}^{1}C_k'H_k$ by \cite{MT:BGT4} and \cite{Moi:85}.

Here, we need to recall that each of $H_4, H_7,$ and $H_8$ is a product of braid factors including $\alpha^{(1)}(m_1), \alpha^{(1)}(m_2), $ and $\alpha^{(1)}(m_3)$, respectively, for some integers $m_1,m_2,$ and $m_3$ that are not determined. Also, $\alpha^{(1)}(m_1)=(\alpha^{(1)}(0))_{z_{5,5'}^{m_1}}$, $\alpha^{(1)}(m_2)=(\alpha^{(1)}(0))_{z_{11,11'}^{m_2}}$, and $\alpha^{(1)}(m_3)=(\alpha^{(1)}(0))_{z_{12,12'}^{m_3}}$. Taking a global conjugation of the above factorization by $z_{5,5'}^{-m_1}\cdot z_{11,11'}^{-m_2}\cdot z_{12,12'}^{-m_3}$, by the complete invariance for $C_k'$ and for $H_k$ with $k\neq 4, \neq 7, \neq 8$, it is Hurwitz equivalent to the following factorization \[\Delta_{36}^2=\prod_{k=12}^{1}C_k'H_k''\] where 
\[H_k''=\begin{cases}
H_k & \text{for} ~k \neq 4,\neq 7, \neq 8\\
(H_4)_{z_{5,5'}^{-m_1}} & \text{for} ~k=4\\
(H_7)_{z_{11,11'}^{-m_2}} & \text{for} ~k=7\\
(H_8)_{z_{11,11'}^{-m_2}z_{12,12'}^{-m_3}} & \text{for} ~k=8.
\end{cases}\]

Moreover, the invariance rule $2$ and $3$ imply that $(H_4)_{z_{5,5'}^{-m_1}}$ is hurwitz equivalent to $H_4(0)$ and $(H_7)_{z_{7,7'}^{-m_2}}$ is hurwitz equivalent to $H_7(0)$  . We can also show that $(H_8(m_3))_{z_{11,11'}^{-m_2}\cdot z_{12,12'}^{-m_3}}$ is hurwitz equivalent to $H_8(m_2)$. First, it is trivial for any factor neither involving indices $11,11'$ nor $12,12'$. Second, $(\alpha^{(1)}(m_3))_{z_{11,11'}^{-m_2}\cdot z_{12,12'}^{-m_3}}=(\alpha^{(1)}(0))_{z_{11,11'}^{-m_2}}$ is hurwitz equivalent to $(\alpha^{(1)}(0))_{z_{12,12'}^{m_2}}$ by the invariance rule $1$. Finally, for any factor of $H_8(m_3)$ outside of $\alpha^{(1)}(m_3)$ involving indices either $11,11'$ or $12,12'$, we can apply the second and the third invariance rules to conclude that the factor of the form $z^{(2)}_{11,11',j}$ is invariant uner $z_{11,11'}^{-m_2}$, the factor $z^{(3)}_{i,11,11'}$ is invariant under $z_{11,11'}^{-m_2}$, the factor $z^{(3)}_{12,12',j,j'}$ is invariant under $z_{12,12'}^{-m_3}$, and the factor $z^{(2)}_{i,12,12'}$ is invariant under $z_{12,12'}^{-m_3}$. Hence,
 \[\Delta_{36}^2=\prod_{j=12}^{1}C_j'H_j''\] where \[H_j'''=\begin{cases}
H_j & \text{for} ~j \neq 4,\neq 7, \neq 8\\
H_4(0) & \text{for} ~j=4\\
H_7(0) & \text{for} ~j=7\\
H_8(m_2) & \text{for} ~j=8.
\end{cases}\]
Now, using a similar argument as in the $d=3$ case in \cite[Proposition 17]{MT:BGT4}, we obtain the conclusion of this lemma.
\end{proof}
\label{lem:global}
\end{lemma}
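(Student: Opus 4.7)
The plan is to absorb the three undetermined integer parameters $m_1, m_2, m_3$ appearing in the factors $\alpha^{(1)}(m_i)$ of $H_4, H_7, H_8$ respectively by performing a carefully chosen global conjugation of the braid monodromy factorization. Since a global conjugation preserves the Hurwitz equivalence class and leaves $\Delta_{36}^2$ invariant, it suffices to find a single braid $g \in B_{36}$ such that simultaneously conjugating every factor of $\prod_{k=12}^1 C_k' H_k$ by $g$ transforms each $\alpha^{(1)}(m_i)$ into $\alpha^{(1)}(0)$, while leaving all remaining factors unchanged (or only Hurwitz-equivalent) within their local blocks.

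First I would take $g = z_{5,5'}^{-m_1}\cdot z_{11,11'}^{-m_2}\cdot z_{12,12'}^{-m_3}$, chosen so that conjugating $\alpha^{(1)}(m_i) = (\alpha^{(1)}(0))_{z_{j,j'}^{m_i}}$ by $z_{j,j'}^{-m_i}$ directly yields $\alpha^{(1)}(0)$. The bulk of the verification is then to check, using the three invariance rules for braid monodromy factors from \cite{MT:BGT4, Moi:85}, that the other factors are preserved up to Hurwitz equivalence in their local groups. The factors of each $C_k'$ and of each $H_k$ with $k \neq 4, 7, 8$ involve braid generators disjoint from (or commuting with) $z_{5,5'}, z_{11,11'}, z_{12,12'}$ by the ordering of strands and the localization of the regenerated configuration near each vertex, so they are invariant under $g$. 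For $H_4$ and $H_7$, only one of the three conjugating generators acts nontrivially on their support, and the argument used by Moishezon--Teicher in the $d=3$ case reduces the parameter to zero.

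The subtle step is $H_8$, because the conjugator $z_{11,11'}^{-m_2}$ — introduced to handle $H_7$ — also acts nontrivially on factors supported near $v_8$. My plan is to track each factor of $H_8(m_3)$ by index: any factor not involving the indices $11, 11', 12, 12'$ is trivially fixed; the factor $\alpha^{(1)}(m_3)$, after conjugation by $z_{11,11'}^{-m_2}\cdot z_{12,12'}^{-m_3}$, collapses to $(\alpha^{(1)}(0))_{z_{11,11'}^{-m_2}}$, which by the first invariance rule is Hurwitz equivalent to $(\alpha^{(1)}(0))_{z_{12,12'}^{m_2}} = \alpha^{(1)}(m_2)$; and the remaining factors of shapes $z^{(2)}_{11,11',j}$, $z^{(3)}_{i,11,11'}$, $z^{(2)}_{i,12,12'}$, $z^{(3)}_{12,12',j,j'}$ are each fixed by the relevant generator by invariance rules 2 and 3. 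Thus the conjugated $H_8$ is Hurwitz equivalent to $H_8(m_2)$ — the parameter has merely been shifted, not eliminated.

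To close the gap, I would invoke the same reasoning used in the analogous statement of \cite[Proposition 17]{MT:BGT4} (established for $d=3$), which shows that the residual value of the parameter at $v_8$ can be absorbed into a further Hurwitz equivalence, yielding $H_8(0)$. The main obstacle throughout is purely combinatorial bookkeeping: correctly cataloguing which strand indices appear in each of the many factors produced by the regeneration and systematically confirming the applicability of the correct invariance rule for each one. No new conceptual input beyond the invariance framework of \cite{MT:BGT4, Moi:85} is required, but the cross-interaction between the three $6$-points demands careful index-by-index accounting.
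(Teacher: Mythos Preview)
Your proposal is correct and follows essentially the same approach as the paper's proof: the same global conjugation by $z_{5,5'}^{-m_1}\cdot z_{11,11'}^{-m_2}\cdot z_{12,12'}^{-m_3}$, the same use of the invariance rules to fix the $C_k'$ and the $H_k$ with $k\neq 4,7,8$, the same index-by-index analysis showing that $H_8$ is sent to $H_8(m_2)$ rather than $H_8(0)$, and the same appeal to \cite[Proposition~17]{MT:BGT4} to eliminate the residual parameter. The only minor discrepancy is that the paper phrases the invariance of the $C_k'$ and the unaffected $H_k$ as ``complete invariance'' (a Hurwitz-equivalence statement for the subfactorizations) rather than literal commutation of the underlying braids, but this does not affect the argument.
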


\begin{remark}
One can easily observe that the above proof of Lemma~\ref{lem:global} for $d=4$ extends to the cases $d\geq 5$ as well. 
\end{remark}

\begin{center}
\begin{figure}
\includegraphics[width=0.9\textwidth]{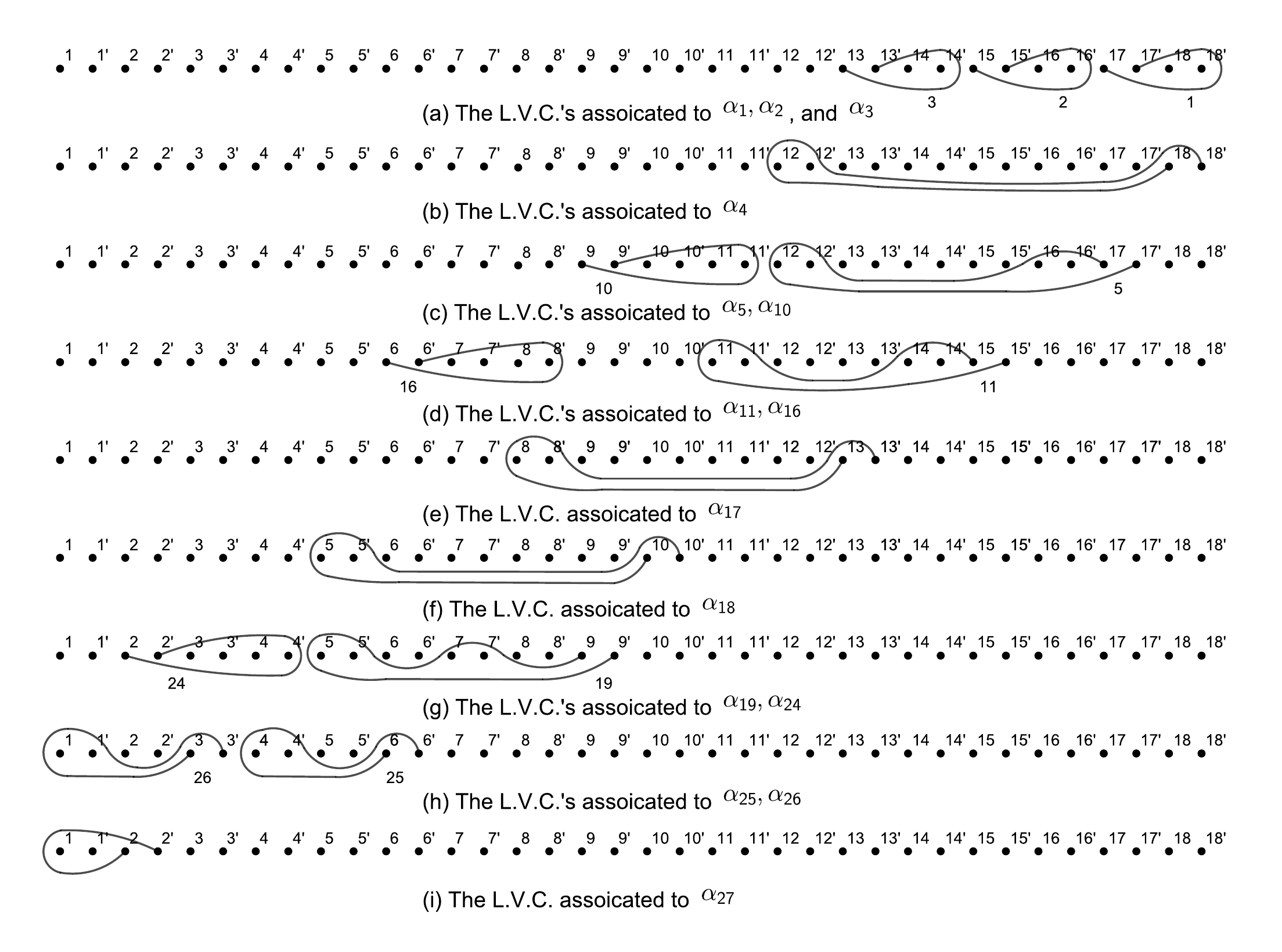}
\caption{Braid monodromy of the branch points of $\pi'|_{S}:S\rightarrow\mathbb{P}^1$}
\label{fig:braidmonodromyofregenerated}
\end{figure}
\end{center}  

\begin{figure}[h]
\includegraphics[width=0.9\textwidth]{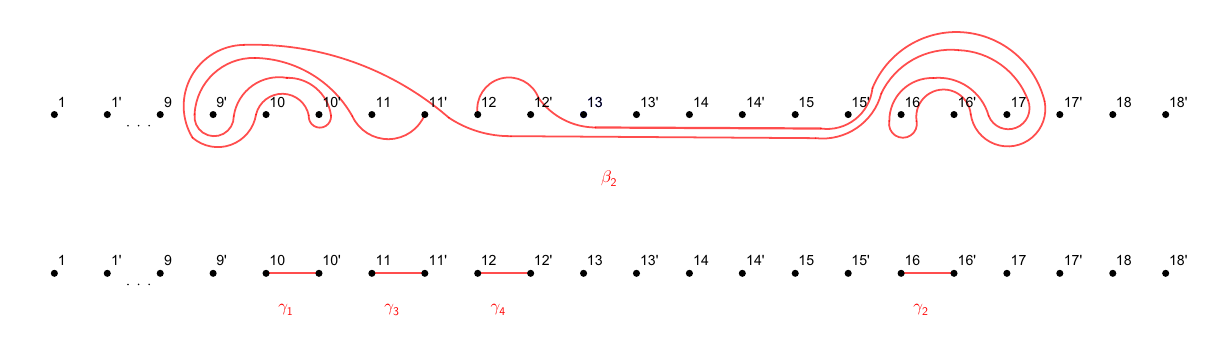}
\caption{L.V.C.'s associated to $\alpha_6, \alpha_7, \alpha_8, \alpha_9$}
\label{braidasswith6,7,8,9}
\end{figure}

\begin{figure}[h]
\includegraphics[width=0.9\textwidth]{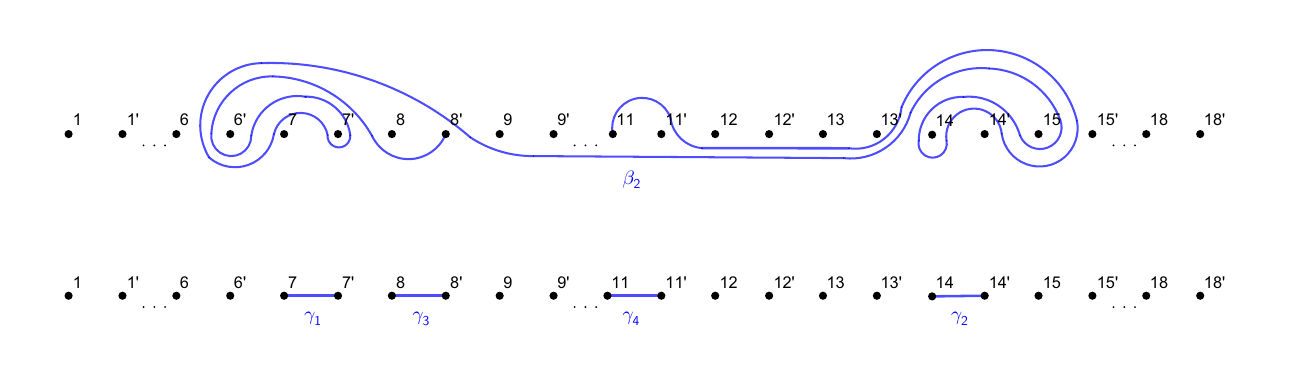}
\caption{L.V.C.'s associated to $\alpha_{12}, \alpha_{13}, \alpha_{14}, \alpha_{15}$}
\label{braidasswith12,13,14,15}
\end{figure}

\begin{figure}[h]
\includegraphics[width=0.9\textwidth]{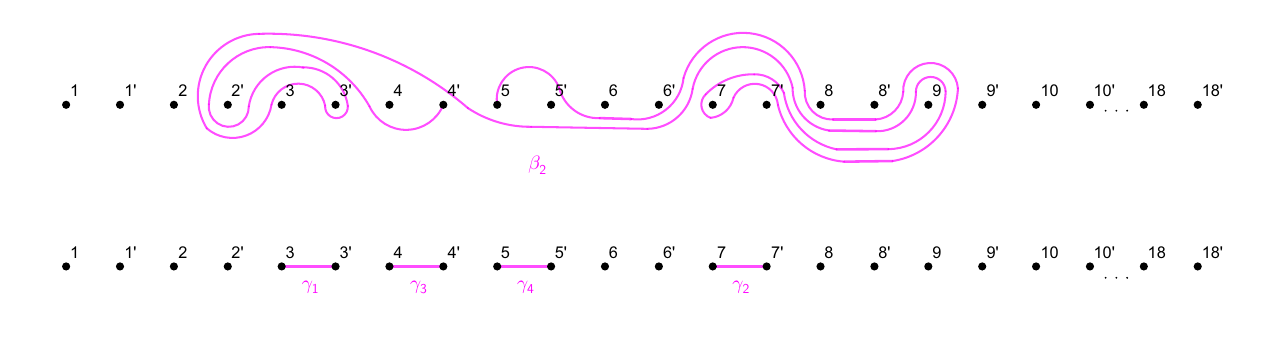}
\caption{L.V.C.'s associated to $\alpha_{20}, \alpha_{21}, \alpha_{22}, \alpha_{23}$}
\label{braidasswith20,21,22,23}
\end{figure}

The right-hand side of the global factorization in the previous Lemma~\ref{lem:global} is given by the product of the braid factors associated with the branch points, the nodes, and the cusps of the curve $S$. Among those factors, we can find the braid factors associated with $27$ branch points, which lift to the Lefschetz critical points, 
by applying Lemma \ref{lem:2pt}, Lemma \ref{lem:6pt}, and Lemma~\ref{lem:global}.

Precisely, for a suitable Hurwitz path system $(\alpha_1,\alpha_2, \cdots, \alpha_{27})$ of $h_4:V_4\dashrightarrow\mathbb{P}^1$ the Lefschetz vanishing cycles of the branch points associated to $\alpha_k$ for $k=1,2, \allowbreak 3,4,5,10,11,16,17,18,19,24,25,26,$ and $27$ are shown in Figure~ \ref{fig:braidmonodromyofregenerated}. The Lefschetz vanishing cycles corresponding to $\alpha_6,\alpha_7,\alpha_8,$ and $\alpha_9$ are given by $\beta_2,$$\tau_{\gamma_3}^{-1}\tau_{\gamma_4}^{-1}(\beta_2),$ $\tau_{\gamma_1}^{-1}\tau_{\gamma_2}^{-1}(\beta),$ and $\tau_{\gamma_1}^{-1}\tau_{\gamma_2}^{-1}\tau_{\gamma_3}^{-1}\tau_{\gamma_4}^{-1}(\beta_2)$, respectively, where the paths
$\beta_2, \gamma_1, \allowbreak \gamma_2,\gamma_3,$ and $\gamma_4$ are shown as in Figure~\ref{braidasswith6,7,8,9}.
Similarly, the Lefschetz vanishing cycles corresponding to $\alpha_{12},\alpha_{13},\alpha_{14}$, and $\alpha_{15}$ (or $\alpha_{20}, \alpha_{21}, \alpha_{22}$, and $\alpha_{23}$) are given by $\beta_2, \tau_{\gamma_3}^{-1} \allowbreak \tau_{\gamma_4}^{-1}(\beta_2), \tau_{\gamma_1}^{-1}\tau_{\gamma_2}^{-1}(\beta),$ and $\tau_{\gamma_1}^{-1}\tau_{\gamma_2}^{-1}\tau_{\gamma_3}^{-1}\tau_{\gamma_4}^{-1}(\beta_2)$, where the paths
$\beta_2, \allowbreak \gamma_1, \gamma_2,\gamma_3,$ and $\gamma_4$ are shown in Figure~\ref{braidasswith12,13,14,15} (or Figure~\ref{braidasswith20,21,22,23}, respectively).

\begin{center}
\begin{figure}[h]
\subfloat[]{\includegraphics[width=0.45\textwidth]{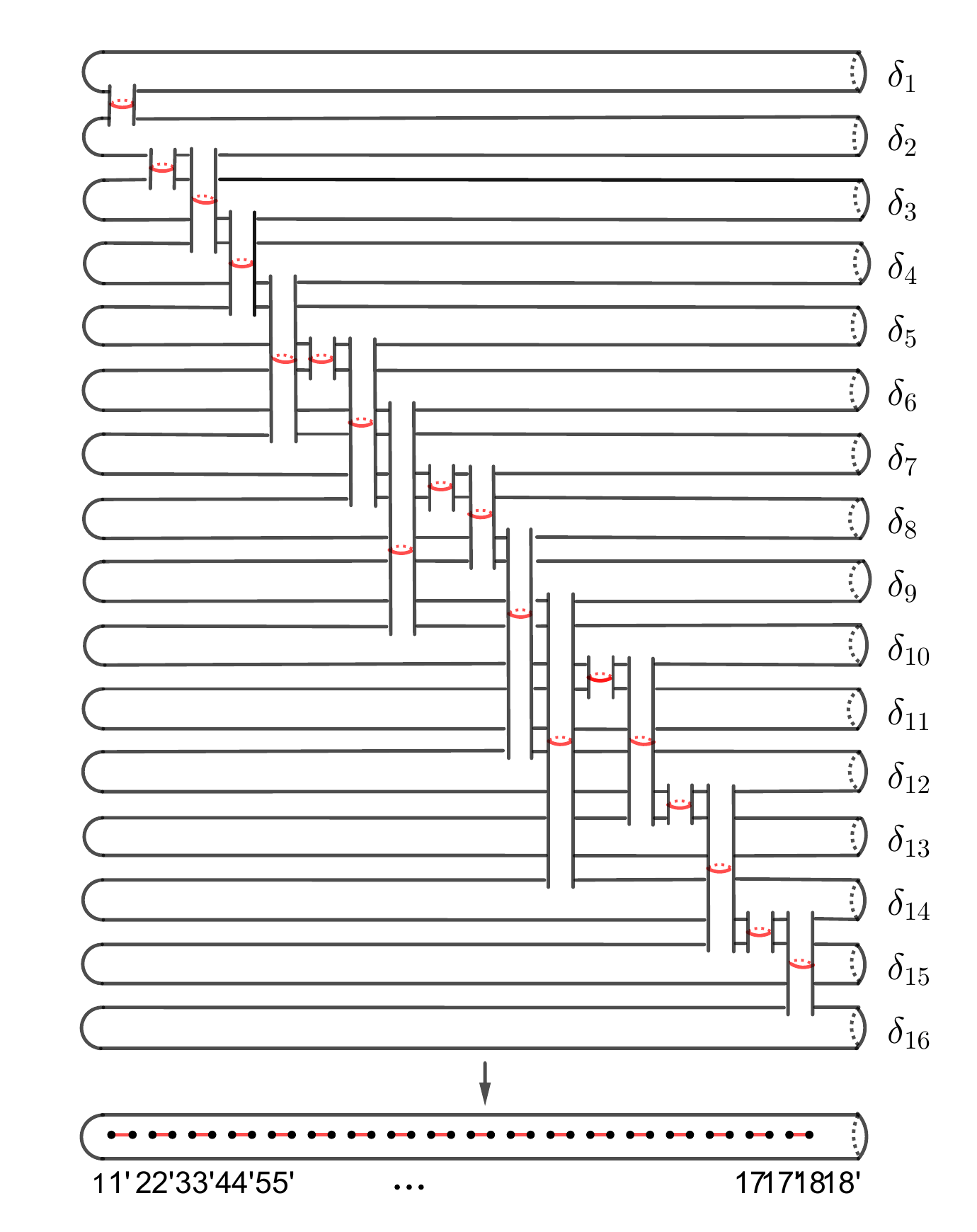}}
\hspace{3mm}
\subfloat[]{\includegraphics[width=0.5\textwidth]{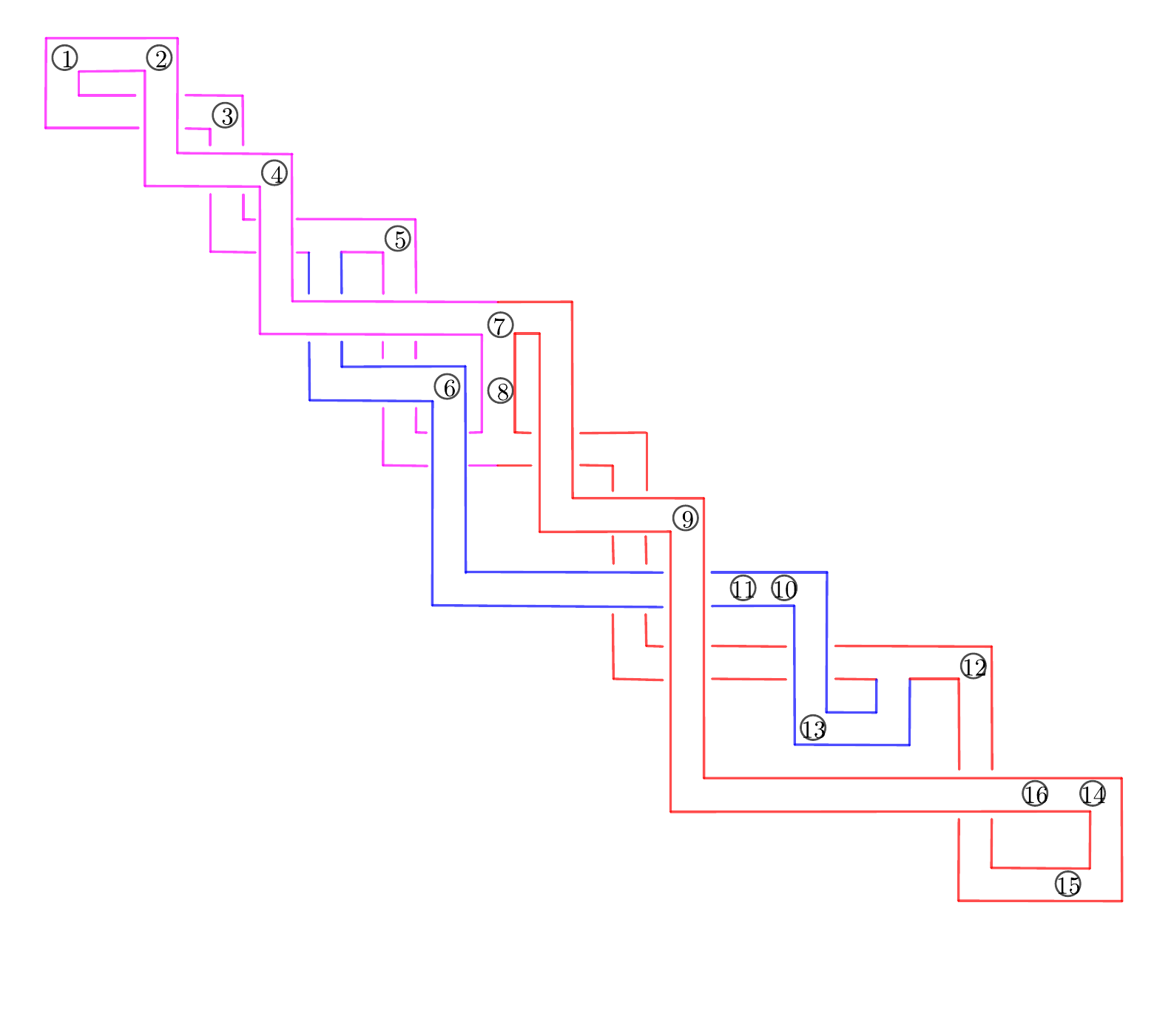}}
\caption{A regular fiber of $h_4:V_4\dashrightarrow\mathbb{P}^1$}
\label{fibercovering}
\end{figure}
\end{center}

\begin{figure}
\subfloat[$c_1, c_2, c_{17}, c_{18}, c_{25}, c_{27}$]{\includegraphics[width=0.5\textwidth]{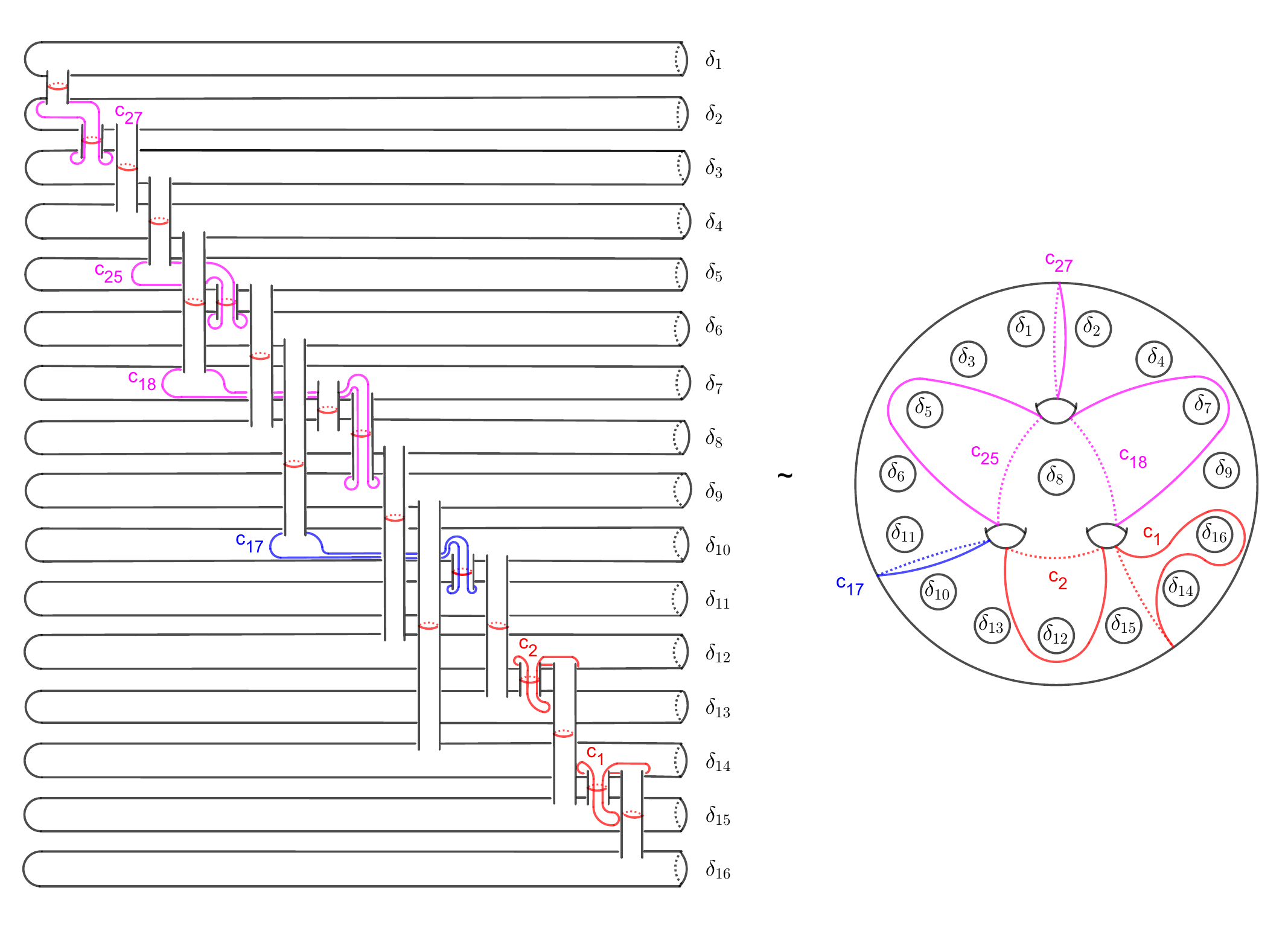}} 
\subfloat[$c_3, c_4, c_{26}$]{\includegraphics[width=0.5\textwidth]{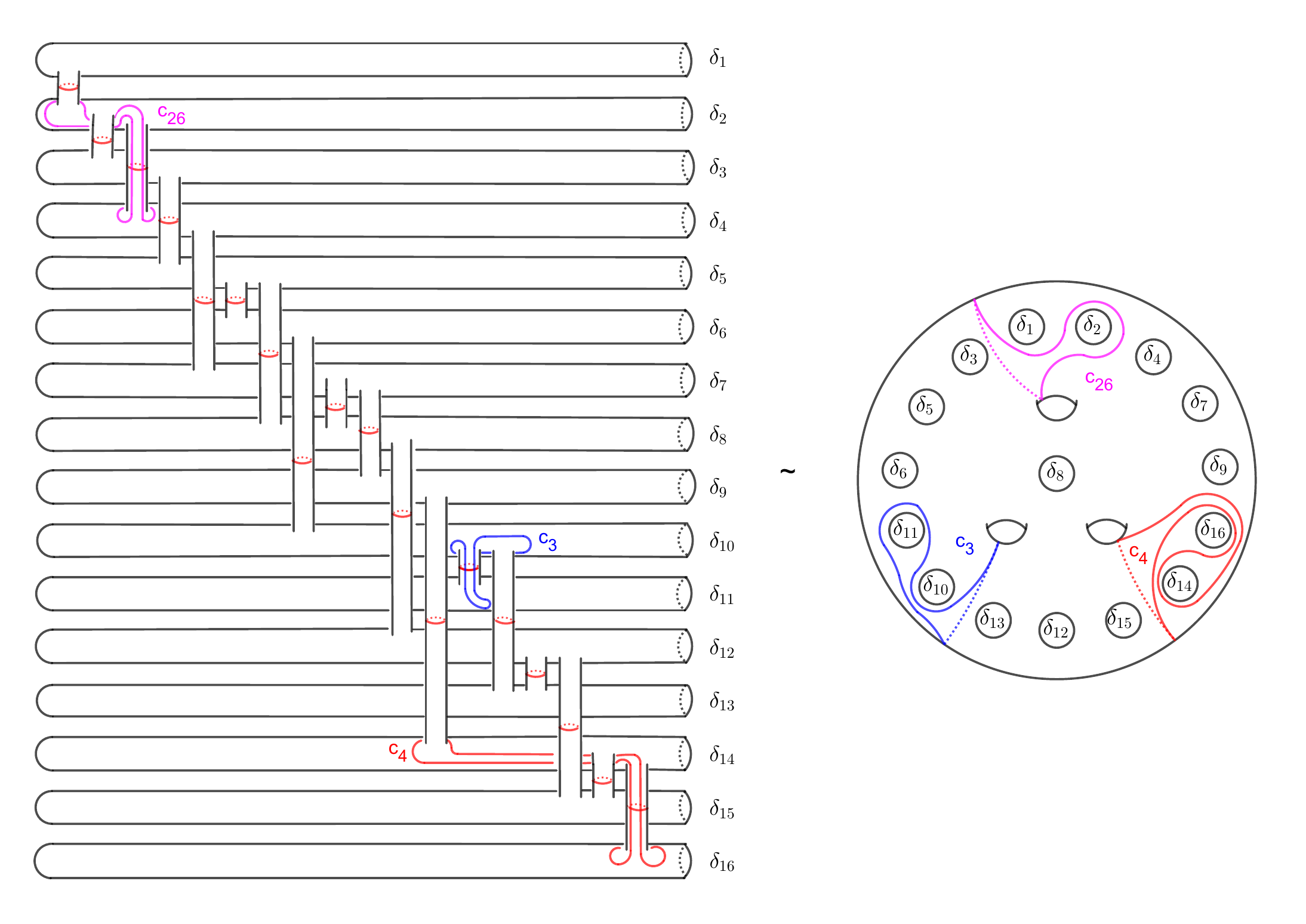}}\\
\subfloat[$c_6$]{\includegraphics[width=0.5\textwidth]{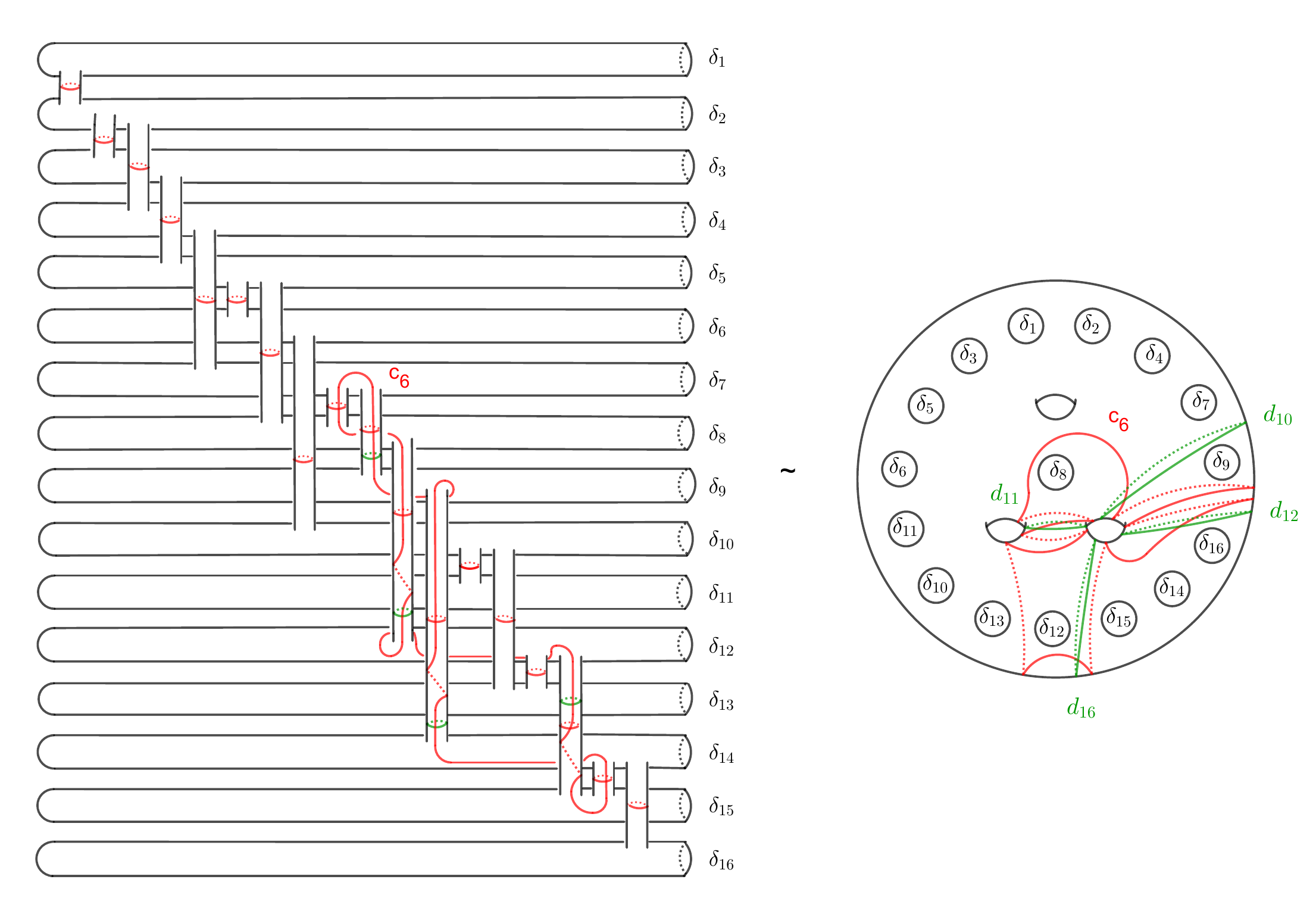}}
\subfloat[$c_5, c_{10}$]{\includegraphics[width=0.5\textwidth]
{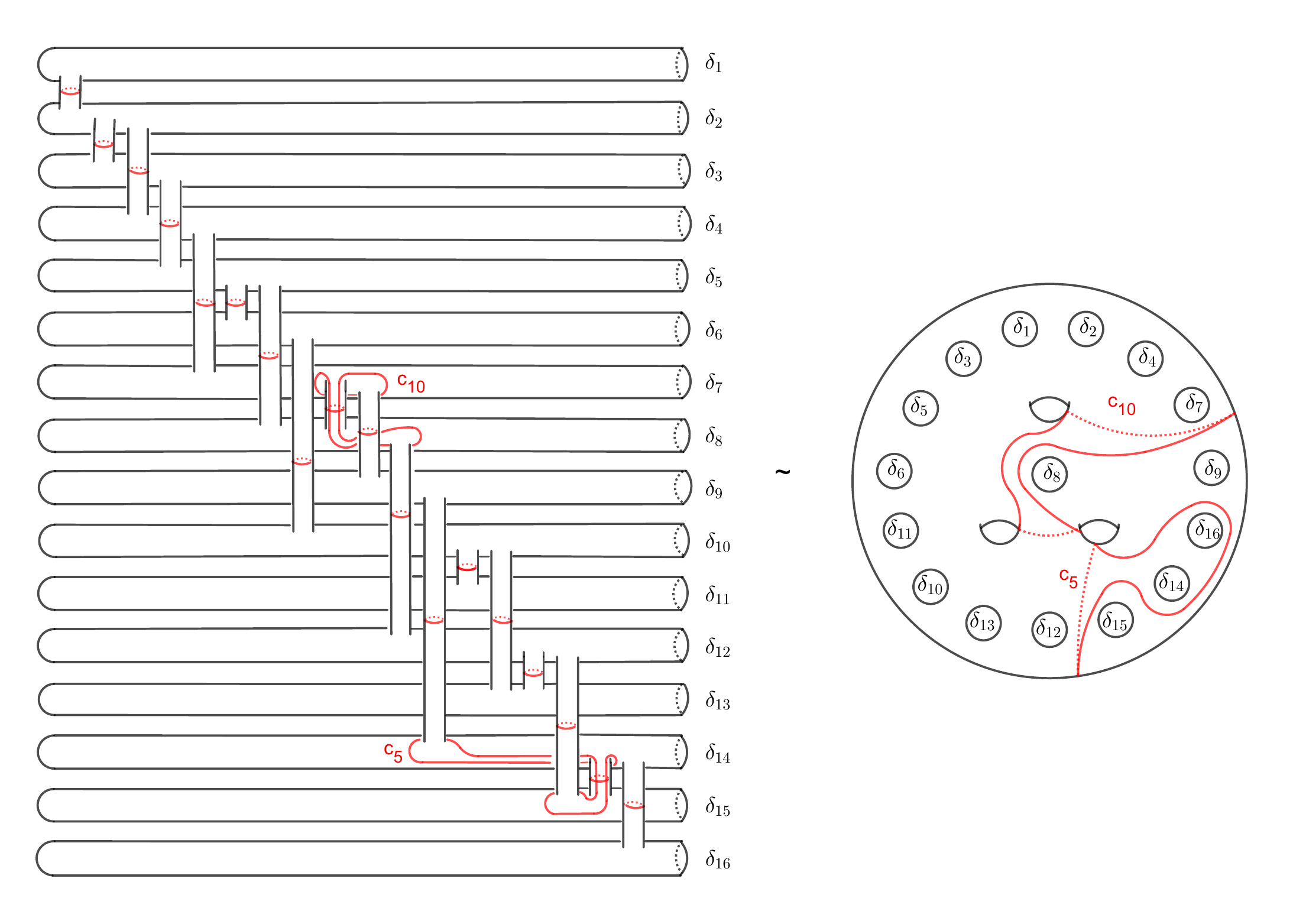}}\\
\subfloat[$c_{12}$]{\includegraphics[width=0.5\textwidth]
{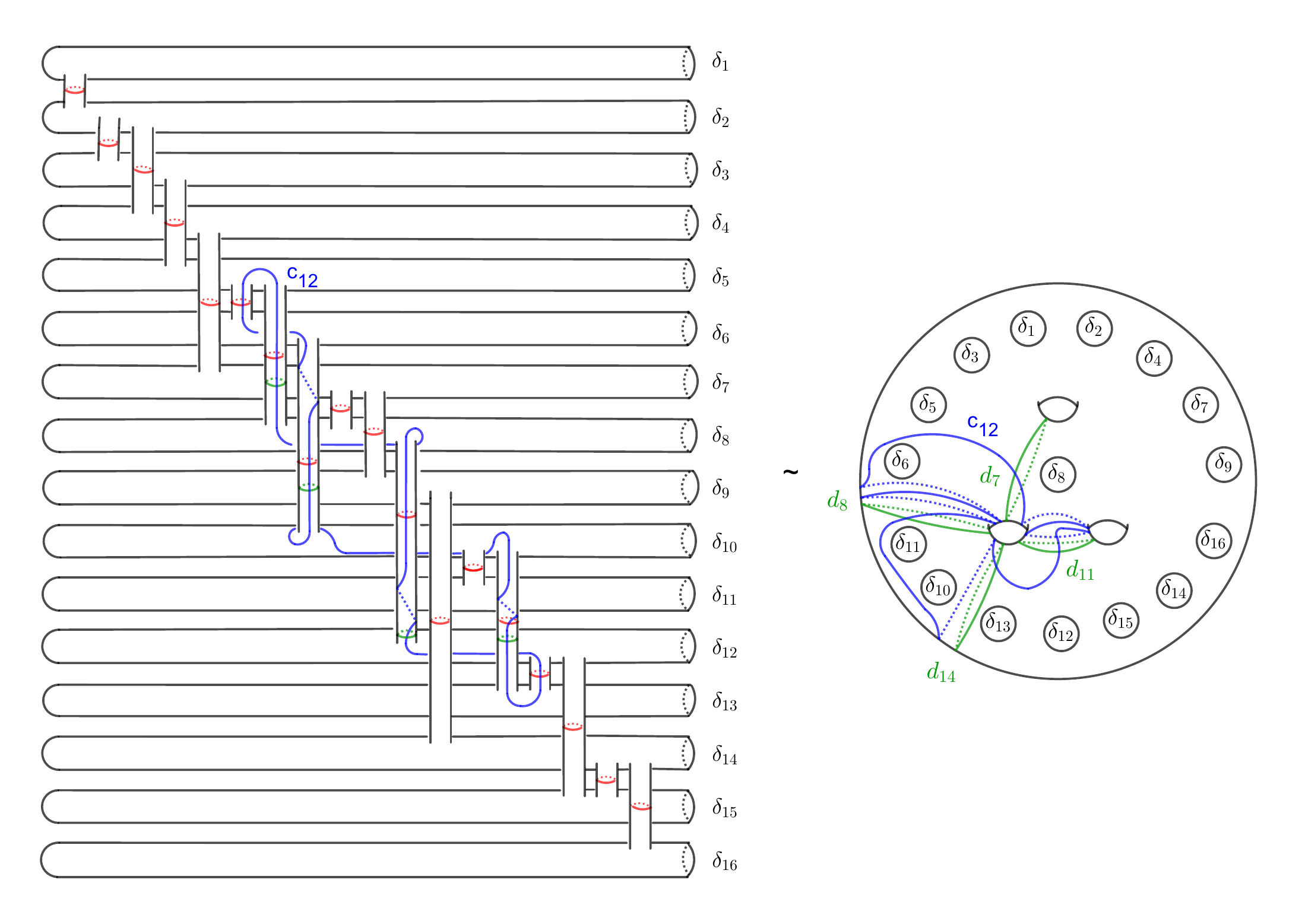}}
\subfloat[$c_{11}, c_{16}$]{\includegraphics[width=0.5\textwidth]
{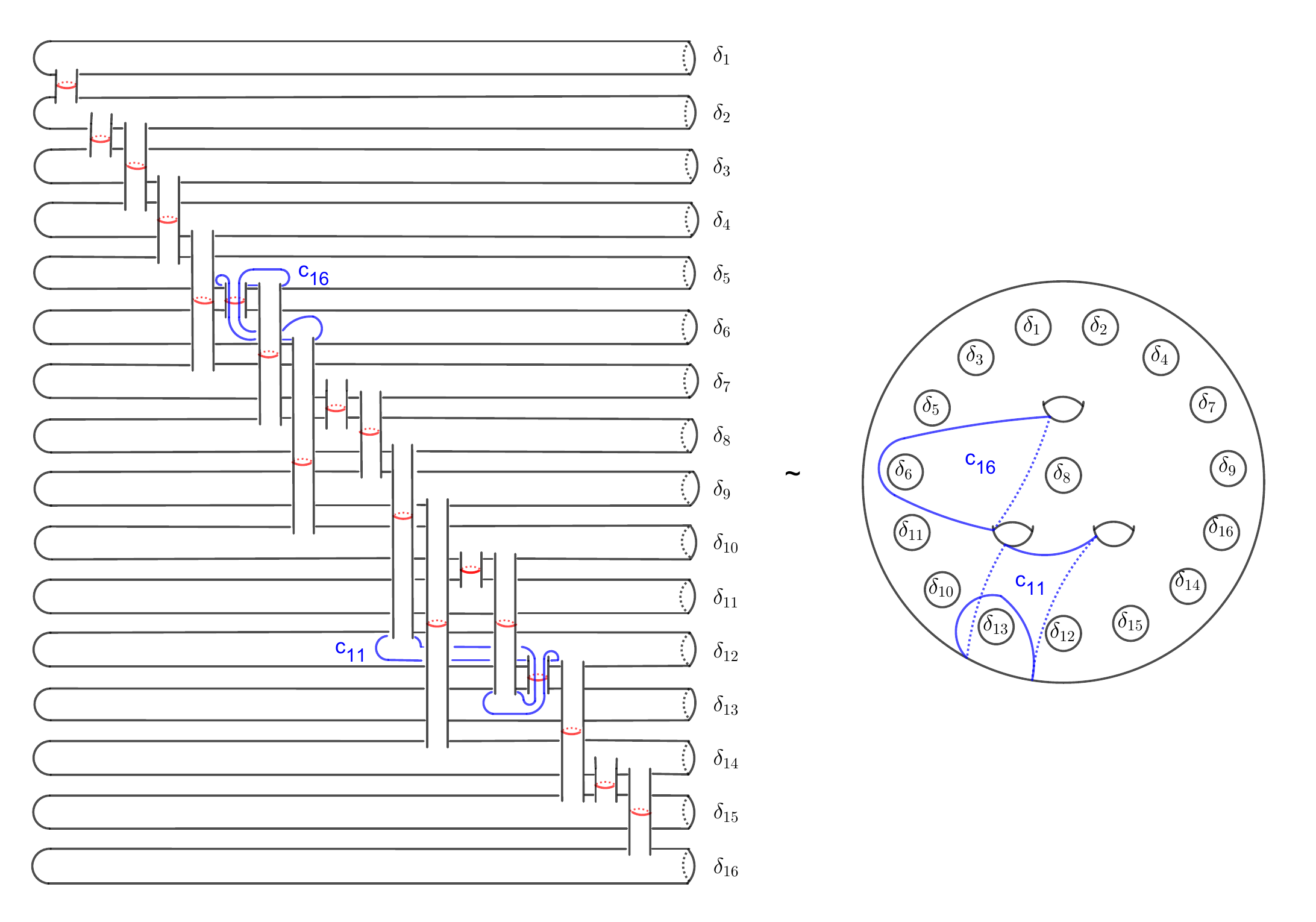}}\\
\subfloat[$c_{20}$]{\includegraphics[width=0.5\textwidth]
{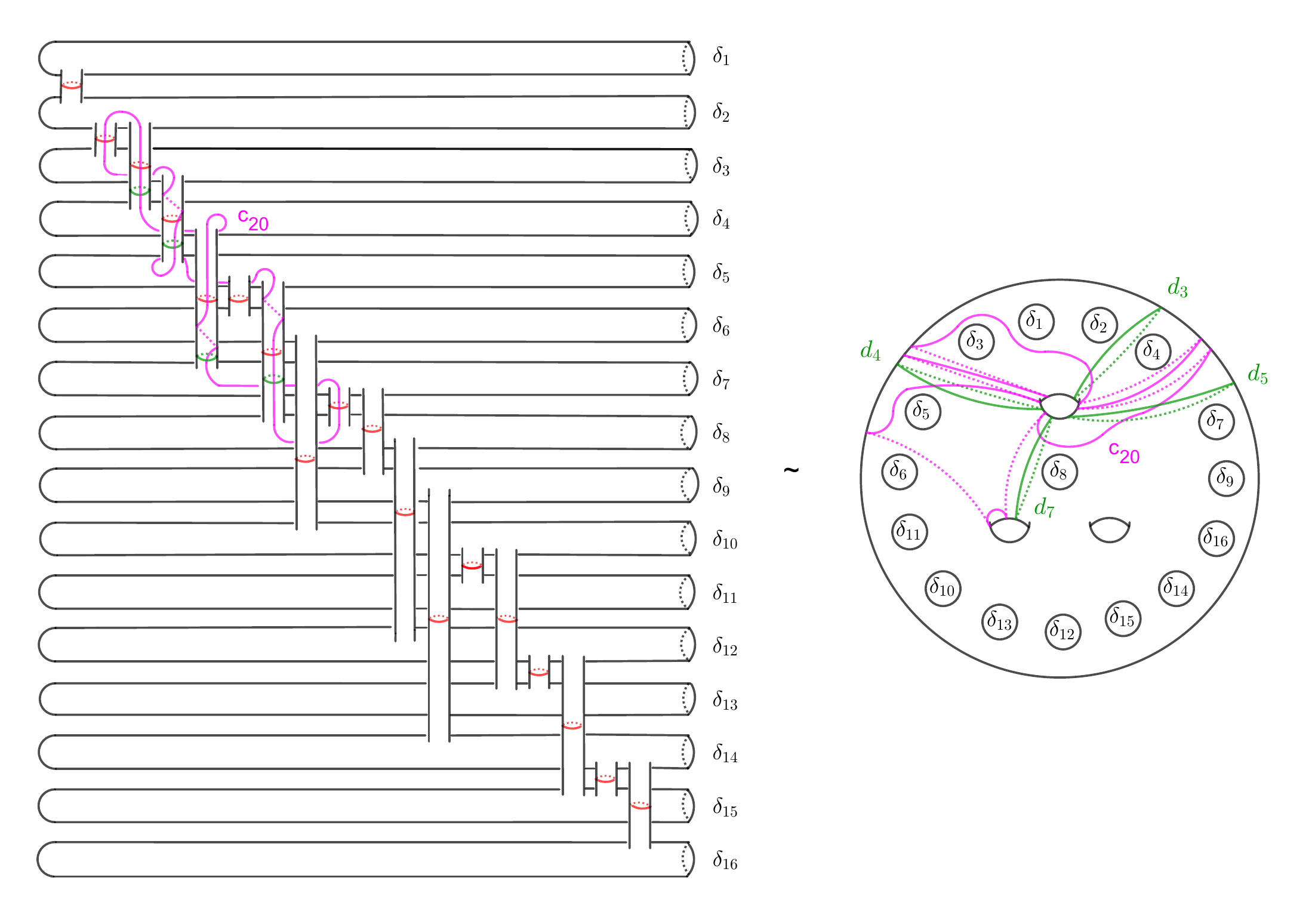}}
\subfloat[$c_{19}, c_{24}$]{\includegraphics[width=0.5\textwidth]
{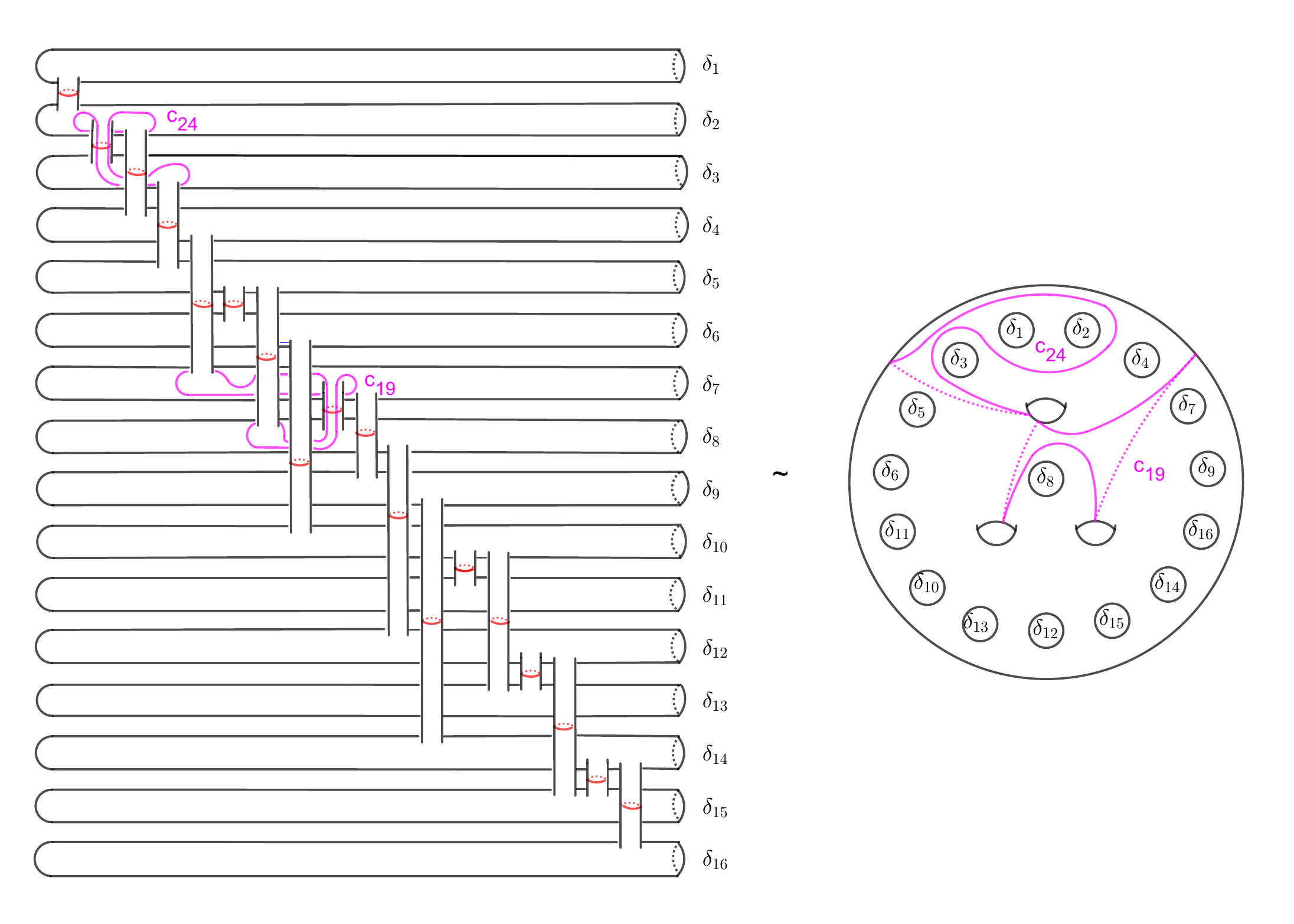}}
\caption{Vanishing cycles of the Lefschetz pencil $h_4:V_4\dashrightarrow\mathbb{P}^1$}
\label{fig:findvanishingcycles}
\end{figure}

In order to obtain the vanishing cycles of $h_4:V_4\dashrightarrow\mathbb{P}^1$,  
we have to lift the braid monodromies around the branch points to the mapping class group of the fiber surface under the branched covering 
$$\phi|_{h_4^{-1}(a_0')}:h_4^{-1}(a_0')\rightarrow\pi'^{-1}(a_0')$$ branched along $\pi'^{-1}(a_0')\cap S$. We denote $\pi'^{-1}(a_0')$ by $D$ and $\pi'^{-1}(a_0')\cap S$ by $Q=\{q_1,q_1',\cdots,q_{18},q_{18}'\}$. One can easily observe that this is a $16$-fold simple branched covering branched over $36$ points given by the following monodromy representation: 
$$\rho:\pi_1(D\setminus Q, q_0)\rightarrow S_{16}$$   
\[
\begin{array}{ccc|ccc|ccc}
\eta_1,\eta_1' & \mapsto & (1,2), & \eta_2,\eta_2' & \mapsto & (2,3), & \eta_3,\eta_3' & \mapsto & (2,4), \\
\eta_4,\eta_4' & \mapsto & (3,5), & \eta_5,\eta_5' & \mapsto & (4,7), & \eta_6,\eta_6' & \mapsto & (5,6), \\
\eta_7,\eta_7' & \mapsto & (5,8), & \eta_8,\eta_8' & \mapsto & (6,10),& \eta_9,\eta_9' & \mapsto & (7,8),\\
\eta_{10},\eta_{10}'& \mapsto & (7,9), & \eta_{11},\eta_{11}' & \mapsto & (8,12),& \eta_{12},\eta_{12}' & \mapsto &(9,14),\\
\eta_{13},\eta_{13}'& \mapsto & (10,11), & \eta_{14},\eta_{14}' & \mapsto & (10,13), & \eta_{15},\eta_{15}' & \mapsto & (12,13),\\
\eta_{16},\eta_{16}'& \mapsto & (12,15), & \eta_{17},\eta_{17}' & \mapsto & (14,15), & \eta_{18},\eta_{18}' & \mapsto & (14,16)
\end{array}
\]
where $q_0\in D\setminus Q$ is a fixed regular value, each $\eta_j$ (or $\eta_j'$) is the loop in $D\setminus Q$ based at $q_0$ and going around one branch point $q_j$ (or $q_j'$, respectively), and its image $(k_j,l_j)$ is the transposition in the symmetric group $S_{16}$ that can be considered as a bijection of $\phi^{-1}(q_0)$ permuting the point close to the plane $P_{k_j}$ and the point close to the plane $P_{l_j}$.

Hence, the branched covering can be described topologically as in Figure~\ref{fibercovering}(a) and we can find a homeomorphism from the covering space to the surface in Figure~\ref{fibercovering}(b) and further to the standard surface of genus $3$ with $16$ boundary components. By taking the circle component of the preimages of the paths in Figures~\ref{fig:braidmonodromyofregenerated}, \ref{braidasswith6,7,8,9}, \ref{braidasswith12,13,14,15}, and \ref{braidasswith20,21,22,23} under this branched covering, as shown in Figure~\ref{fig:findvanishingcycles}, we finally obtain the vanishing cycles $c_1,\cdots,c_{27}$ in the monodromy factorization $t_{c_{27}}\circ \cdots \circ t_{c_2}\circ t_{c_1}=\prod_{j=1}^{16}t_{\delta_j}$. Here, the simple closed curves $c_{1},\cdots,c_{6}, c_{10}, \cdots, c_{12}, c_{16}, \cdots, c_{20}, c_{24}, \allowbreak\cdots, c_{27}$ can be drawn on the standard surface $\Sigma_3^{16}$ after a fixed homeomorphism, as depicted in Figure~\ref{fig:findvanishingcycles}. Meanwile, the remaining simple closed curves $c_{7}, c_8, c_9, c_{13},\allowbreak c_{14}, c_{15},\allowbreak c_{21}, c_{22},$ and $c_{23}$ are given by $t_{d_{11}}^{-1}t_{d_{12}}^{-1}(c_{6}), t_{d_{10}}^{-1}t_{d_{16}}^{-1}(c_6), t_{d_{10}}^{-1}t_{d_{11}}^{-1}t_{d_{12}}^{-1}t_{d_{16}}^{-1}(c_6)$, $t_{d_8}^{-1}t_{d_{11}}^{-1}(c_{12}),\allowbreak t_{d_7}^{-1}t_{d_{14}}^{-1}(c_{12})$, $t_{d_7}^{-1}t_{d_8}^{-1}t_{d_{11}}^{-1}t_{d_{14}}^{-1}(c_{12})$, $t_{d_{4}}^{-1}t_{d_5}^{-1}(c_{20}), t_{d_3}^{-1}t_{d_{7}}^{-1}(c_{20}),$ and \linebreak
$t_{d_3}^{-1}t_{d_4}^{-1}t_{d_5}^{-1}t_{d_7}^{-1}(c_{20})$, respectively.

\subsection{Equivalence between a suitable topological construction and the holomorphic Lefschetz pencil for $d=4$}

In the previous section, we found the monodromy factorization of the holomorphic Lefschetz pencil of degree $4$ curves on $\mathbb{P}^2$. In this section, we will show that the genus $3$ holomorphic Lefschetz pencil can be constructed topologically by breeding the relations for the genus $1$ holomorphic Lefschetz pencil. 

\begin{figure}
\subfloat[]{\includegraphics[width=0.3\textwidth]{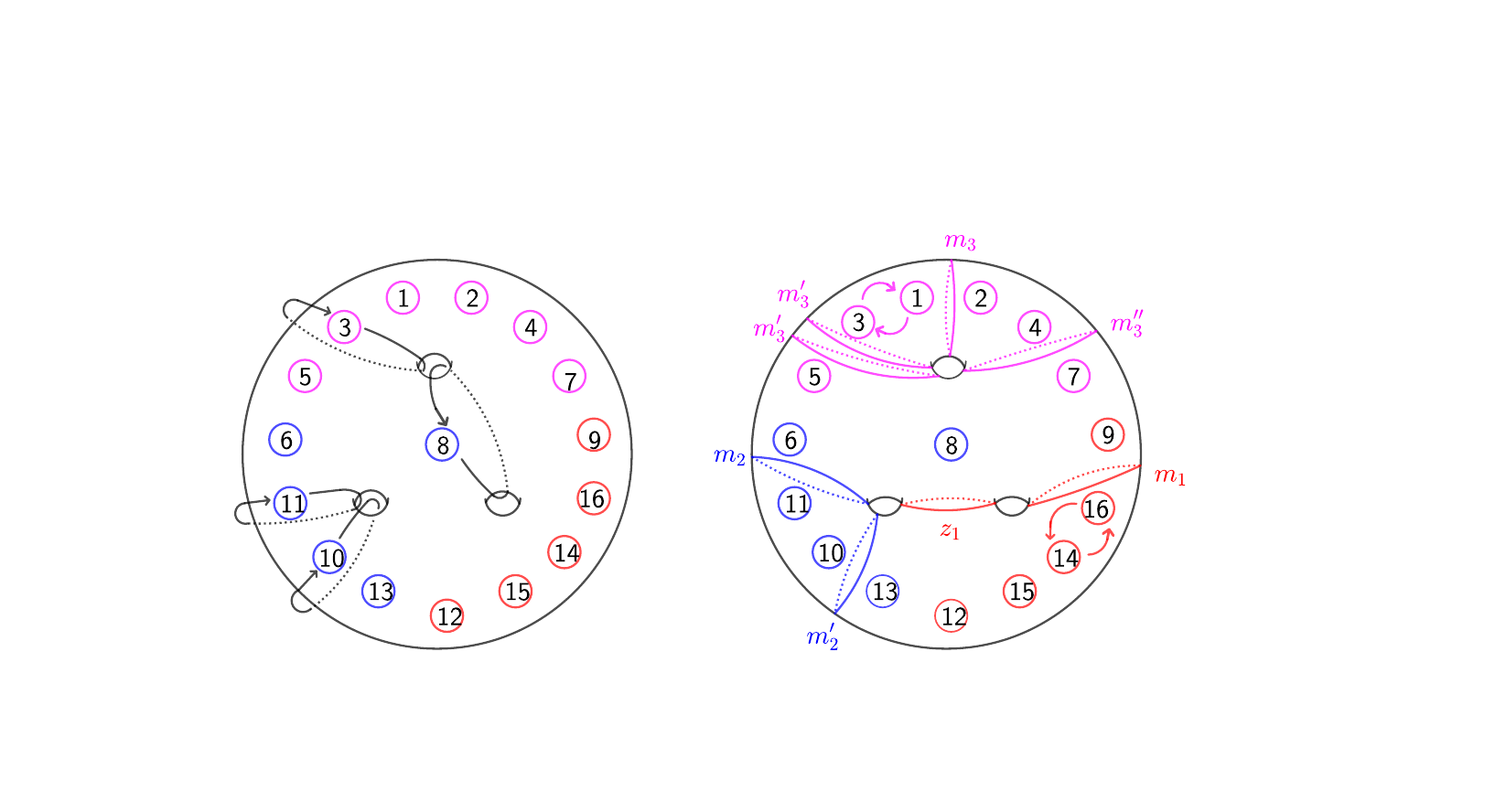}}
\subfloat[]{\includegraphics[width=0.3\textwidth]{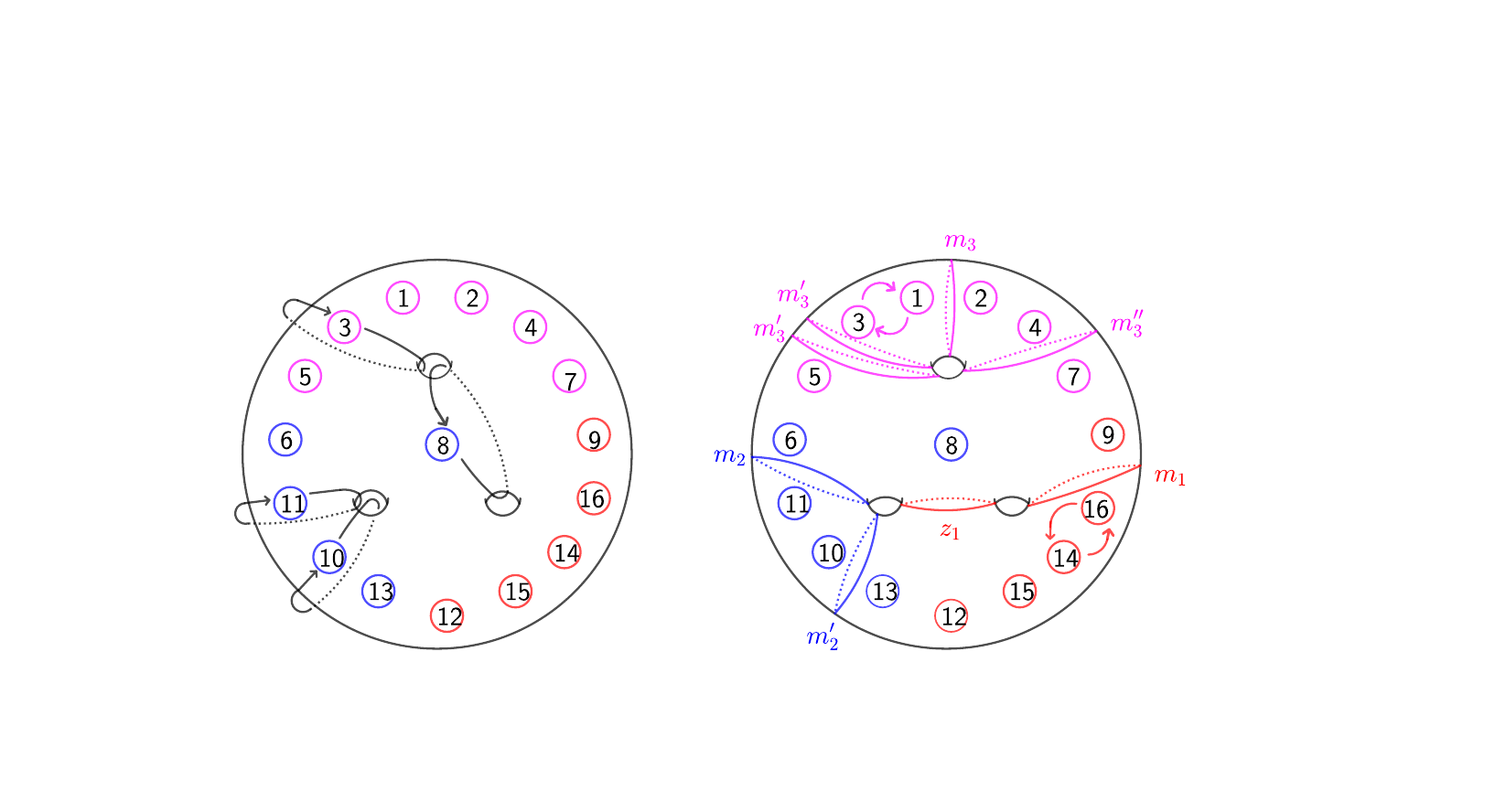}}
\caption{The global conjugations}
\label{globalconjugations}
\end{figure}

\begin{figure}
\subfloat{\includegraphics[width=0.33\textwidth]{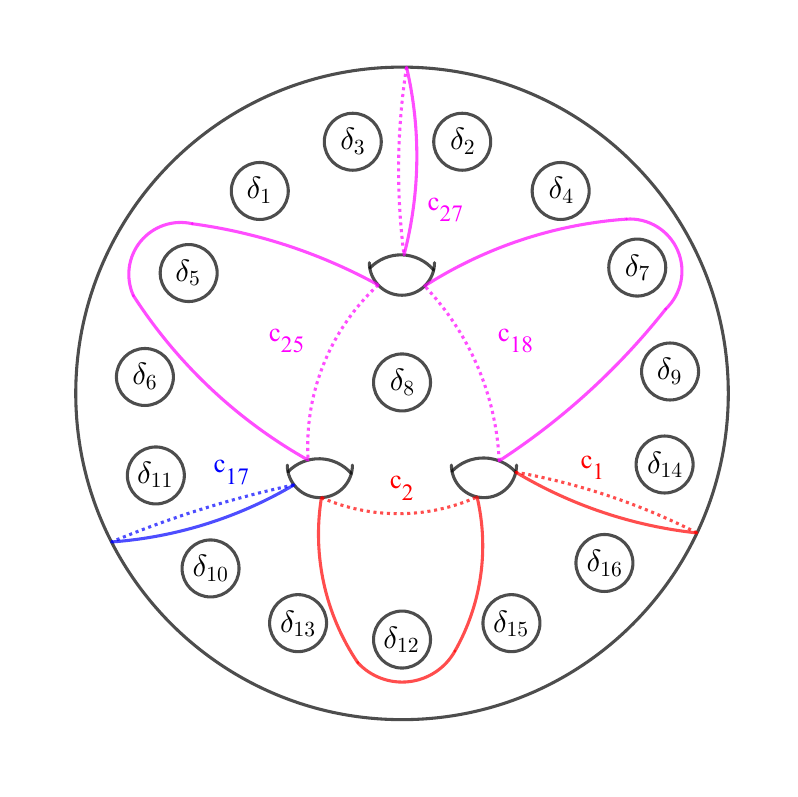}}
\subfloat{\includegraphics[width=0.33\textwidth]{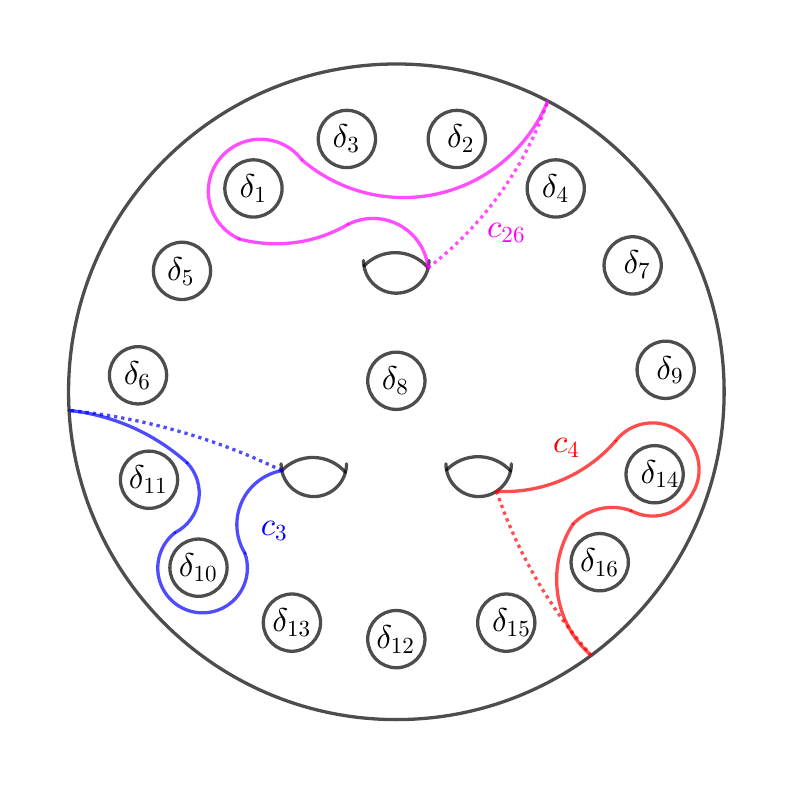}}
\subfloat{\includegraphics[width=0.33\textwidth]{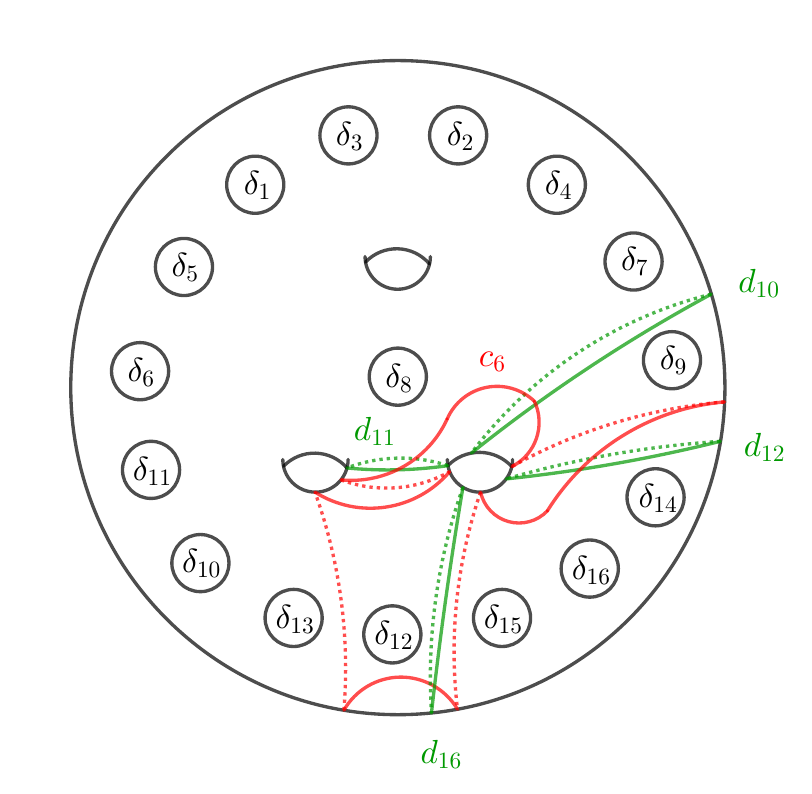}}\\
\subfloat{\includegraphics[width=0.33\textwidth]{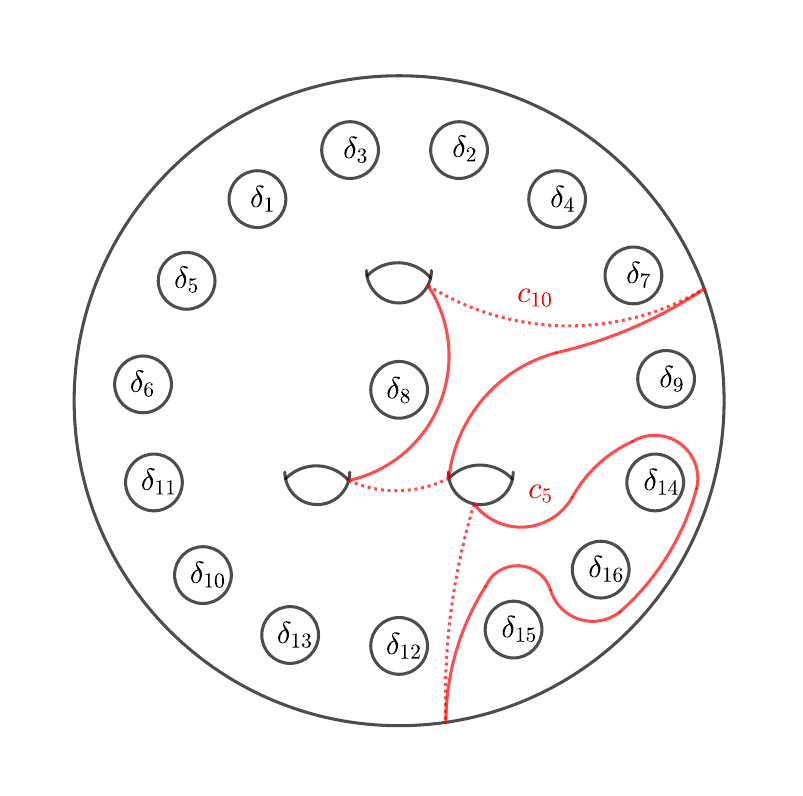}}
\subfloat{\includegraphics[width=0.33\textwidth]{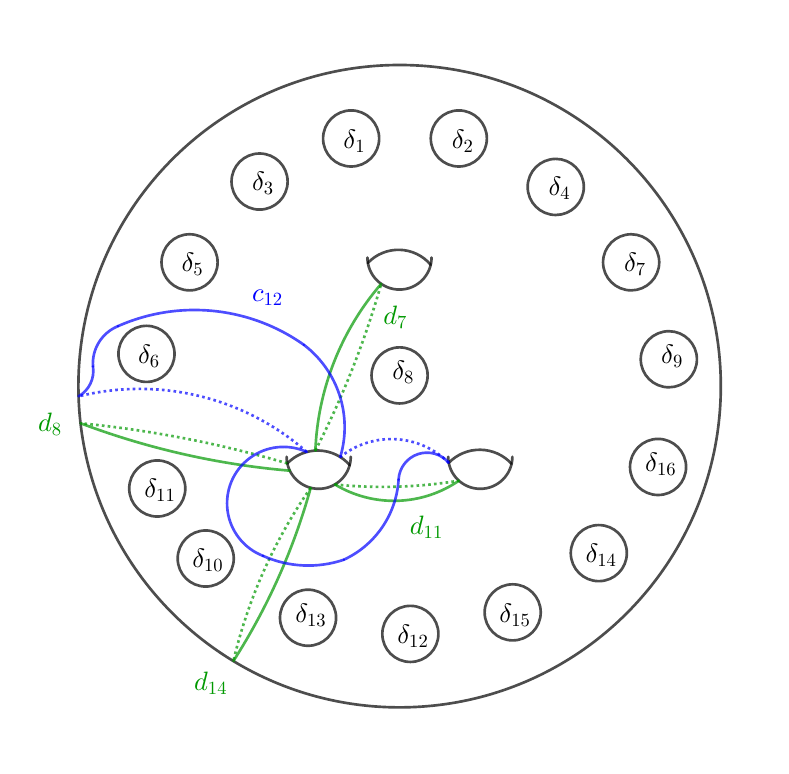}}
\subfloat{\includegraphics[width=0.33\textwidth]{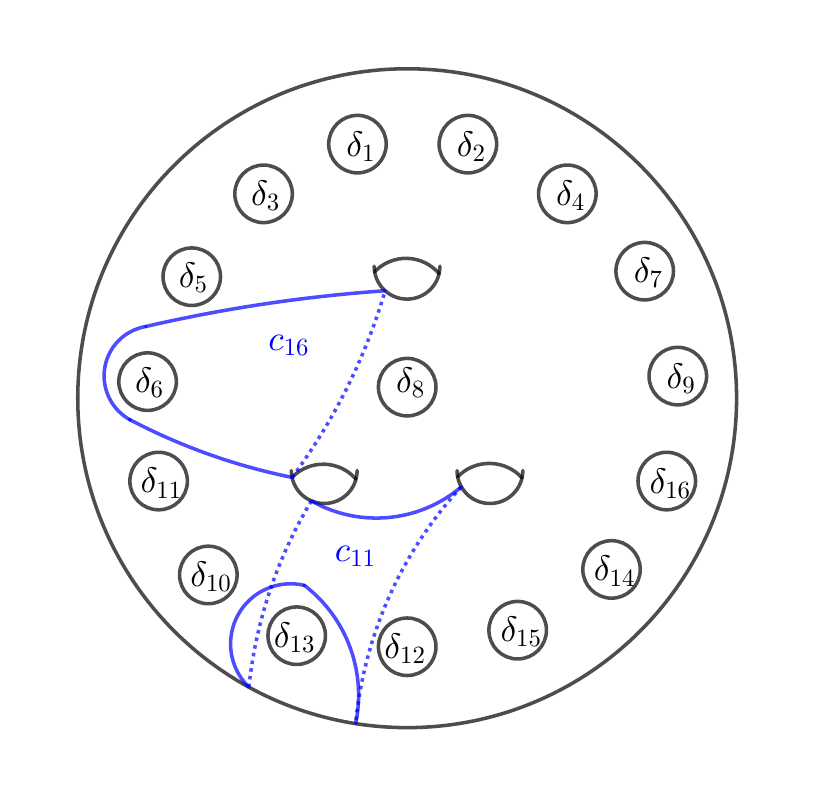}}\\
\subfloat{\includegraphics[width=0.33\textwidth]{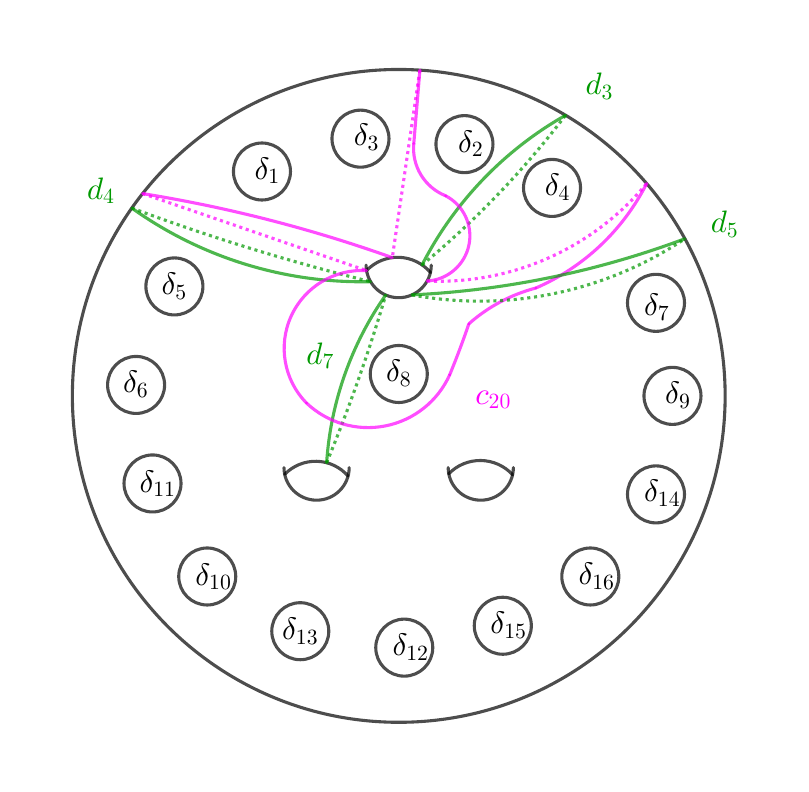}}
\subfloat{\includegraphics[width=0.33\textwidth]{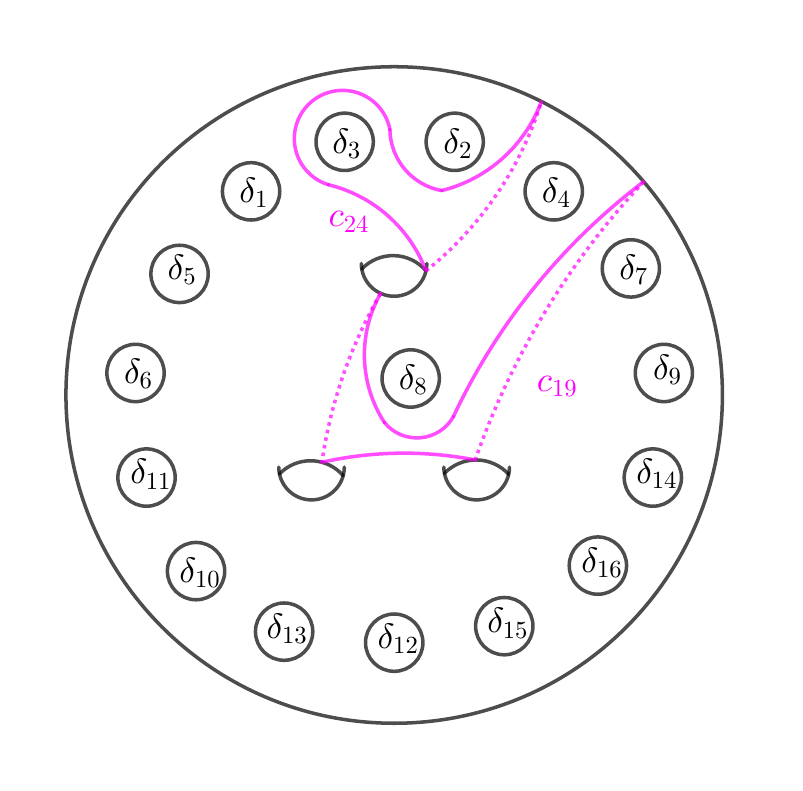}}
\caption{Vanishing cycles of the genus $3$ holomorphic Lefschetz pencil}
\label{fig:bmtprime}
\end{figure}

\begin{figure}[h]
\includegraphics[width=0.9\textwidth]{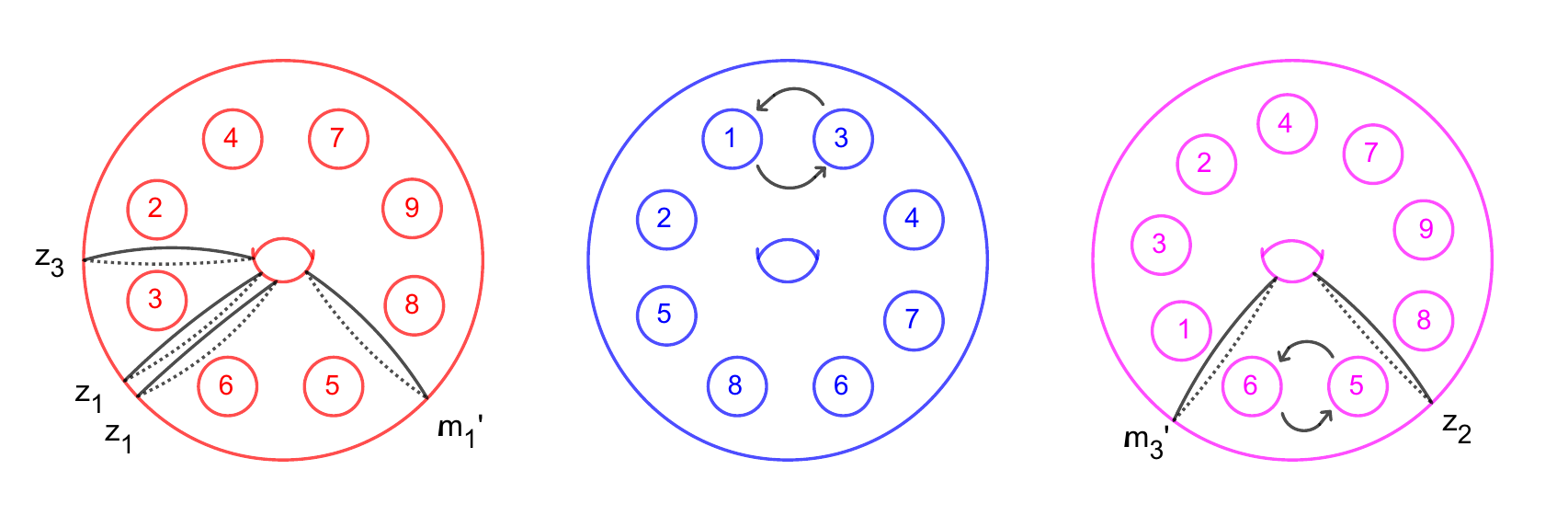}
\caption{Modifications for a suitable topological construction}
\label{fig:modifications}
\end{figure}

\begin{proof}[Proof of Theorem \ref{thm:second} (b)]
First, we perform the following sequence of global conjugations to the monodomry factorization obtained in the previous section based on the braid monodromy technique : first, we push the boundary components $\delta_3,\delta_8,\delta_{10},$ and $\delta_{11}$ along the curves in Figure~\ref{globalconjugations}(a), then take a global conjugation by $(t_{m_3}t_{m_3'}^{-2}t_{m_3''}^{-1})(t_{m_2}^{-1}t_{m_2'}^{-1})(t_{z_1}^{-1}t_{m_1}^{-1})$ where the Dehn twist curves are shown in Figure~\ref{globalconjugations}(b), and finally swap $\delta_1$ and $\delta_3$, also $\delta_{14}$ and $\delta_{16}$ as in Figure~\ref{globalconjugations}(b).   
Then we obtain another monodromy factorization of the holomorphic Lefschetz pencil $h_4:\mathbb{P}^2\dashrightarrow\mathbb{P}^1$ in Theorem \ref{thm:second} (a), where the resulting vanishing cycles are given in Figure~\ref{fig:bmtprime}. 

Now, we claim that one can obtain this monodromy factorization by breeding two copies of the $8$-holed torus relations and one copy of the $9$-holed torus relation as in the proof of Lemma \ref{lemma for $d=4$}. Here, we use the same building blocks up to isomorphism as those in the Lemma \ref{lemma for $d=4$}, but with some modifications using certain global conjugations as in Figure~\ref{fig:modifications} before the breeding operation. Precisely, for the first building block, we begin with the $8$-holed torus relation obtained by capping the boundary component $\delta_1$ from Figure $23$ in \cite{HH:21}, and then take a global conjugation by $(t_{m_1'}^{-1}t_{z_1})(t_{z_1}t_{z_3}^{-1})$. We denote the resulting monodromy factorization by $c_{12}^{(1)}\cdots c_2^{(1)}c_1^{(1)}=\delta_9\cdots\delta_3\delta_2$. Next, we swap $\delta_1$ and $\delta_3$ of the $8$-holed torus rleation in Figure $26$ in \cite{HH:21} to get $c_{12}^{(2)}\cdots c_2^{(2)}c_1^{(2)}=d_{8}\cdots d_2d_1$.
Finally, we begin with the $9$-holed torus relation in Figure $23$ in \cite{HH:21}, then swap $\delta_5$ and $\delta_6$, and then take a global conjugation by $t_{z_2}^{-1}t_{m_3'}$. We denote the resulting monodromy factorization by $B_{12}\cdots B_2B_1=\delta_{9}'\cdots \delta_2'\delta_1'$.\\ 

By breeding the above two copies of the $8$-holed torus relations as in Figure~\ref{twoembeddingsof8holedtorus}, we get the following relation in $\Gamma_2^{12}$:
\begin{multline}
(d_3d_5d_6d_7d_8)(\delta_4\delta_5\delta_7\delta_8\delta_9)\\
=(\overline{d}_2\overline{\delta}_2c_{12}^{(2)}c_{11}^{(2)}c_{11}^{(1)})(c_{10}^{(2)}\cdots c_5^{(2)}c_4^{(2)}c_2^{(2)})(c_9^{(1)}\cdots c_2^{(1)}c_1^{(1)}).
\end{multline}

Next, by applying a $9$-holed torus breeding as in Figure~\ref{fig:9holedtorusbreeding}, we obtain the following relation in in $\Gamma_3^{16}$:
\begin{multline}
(\delta_1'\delta_2'\delta_3'\delta_4'\delta_5'\delta_7')(d_3d_5d_6d_7d_8)(\delta_4\delta_5\delta_7\delta_8\delta_9)\\
=(B_{12}\cdots B_{4}B_{3})(c_{10}^{(2)}\cdots c_{5}^{(2)}c_{4}^{(2)}c_2^{(2)})(c_9^{(1)}\cdots c_2^{(1)}c_1^{(1)}).
\end{multline}

It is easy to check that the resulting vanishing cycles coincide with those in Figure \ref{fig:bmtprime}, and this completes the proof. 

\end{proof}

\bibliographystyle{plain}
\bibliography{LPonCP2}

\end{document}